\theoremstyle{plain}
\newtheorem{thm}{\protect\theoremname}
\theoremstyle{remark}
\newtheorem{rem}[thm]{\protect\remarkname}
\theoremstyle{definition}
\newtheorem{defn}[thm]{\protect\definitionname}
\theoremstyle{plain}
\newtheorem{prop}[thm]{\protect\propositionname}
\theoremstyle{plain}
\newtheorem{lem}[thm]{\protect\lemmaname}
\theoremstyle{plain}
\theoremstyle{plain}
\newtheorem{cor}[thm]{\protect\corollaryname}
\theoremstyle{definition}
\newtheorem{problem}[thm]{\protect\problemname}
\theoremstyle{definition}
\newenvironment{example}
{\pushQED{\qed}\examplex}
{\popQED\endexamplex}
\numberwithin{thm}{subsection}
\newcommand{\eps}{\varepsilon}
\newcommand{\R}{\mathbb{R}}
\newcommand{\N}{\mathbb{N}}
\newcommand{\Z}{\mathbb{Z}}
\newcommand{\mH}{\mathcal{H}}\newcommand{\mA}{\mathcal{A}}\newcommand{\mB}{\mathcal{B}}\newcommand{\mC}{\mathcal{C}}\newcommand{\mD}{\mathcal{D}}\newcommand{\mS}{\mathcal{S}}
\renewcommand{\r}{\mathrm{r}}
\newcommand{\E}{\mathbb{E}}
\newcommand{\mX}{\mathcal{X}}
\newcommand{\mP}{\mathcal{P}}
\newcommand{\bp}{\begin{proof}}
	\newcommand{\ep}{\end{proof}}
\newcommand{\ummdim}{\overline{\mathrm{mdim}}_M}
\newcommand{\lmmdim}{\underline{\mathrm{mdim}}_M}
\newcommand{\mmdim}{\mathrm{mdim}_M}
\newcommand{\udim}{\overline{\mathrm{dim}}_B}
\newcommand{\ldim}{\underline{\mathrm{dim}}_B}
\newcommand{\umbdim}{\overline{\mathrm{mdim}_B}}
\newcommand{\mbdim}{\overline{\mathrm{mdim}}_B}
\newcommand{\uid}{\overline{\mathrm{ID}}}
\newcommand{\diam}{\mathrm{diam}}
\newcommand{\mdim}{\mathrm{mdim}}
\newcommand{\id}{\mathrm{id}}
\newcommand{\supp}{\mathrm{supp}}
\newcommand{\Prob}{\mathrm{Prob}}
\newcommand{\dist}{\mathrm{dist}}
\newcommand{\Ker}{\mathrm{Ker}}
\newcommand{\LIN}{\mathrm{LIN}}
\providecommand{\conjecturename}{Conjecture}
\providecommand{\corollaryname}{Corollary}
\providecommand{\definitionname}{Definition}
\providecommand{\examplename}{Example}
\providecommand{\lemmaname}{Lemma}
\providecommand{\problemname}{Problem}
\providecommand{\propositionname}{Proposition}
\providecommand{\remarkname}{Remark}
\providecommand{\theoremname}{Theorem}
\begin{document}
\sloppy
\title{Metric mean dimension and analog compression}

\author[Y. Gutman]{Yonatan Gutman}
\address{Y. Gutman: Institute of Mathematics, Polish Academy of Sciences,
	ul. \'Sniadeckich 8, 00-656 Warszawa, Poland}
\email{y.gutman@impan.pl}

\author[A. \'{S}piewak]{Adam \'{S}piewak}
\address{A. \'{S}piewak: Institute of Mathematics, University of Warsaw, ul. Banacha 2, 02-097 Warszawa, Poland}
\email{a.spiewak@mimuw.edu.pl}

\begin{abstract}
Wu and Verd\'{u} developed a theory of almost lossless analog compression,
where one imposes various regularity conditions on the compressor
and the decompressor with the input signal being modelled by a (typically
infinite-entropy) stationary stochastic process. In this work we consider all stationary stochastic processes with trajectories in a prescribed set of (bi-)infinite sequences and find uniform lower and upper bounds for certain compression rates in terms of \textit{metric mean dimension} and \textit{mean box dimension}. An essential tool is the recent Lindenstrauss-Tsukamoto variational principle expressing metric mean dimension in terms of rate-distortion functions. We obtain also lower bounds on compression rates for a fixed stationary process in terms of the rate-distortion dimension rates and study several examples.
\end{abstract}

\keywords{metric mean dimension, lossless compression, analog signals, informationdimension}

\subjclass[2000]{37A05 (Measure-preserving transformations), 37B10 (Symbolic dynamics), 68P30 (Coding and information theory (compaction, compression, models
of communication, encoding schemes, etc.)), 94A29 (Source coding), 94A34 (Rate-distortion
theory)}
\thanks{This paper was presented in part at 2019 IEEE International Symposium on Information Theory (Paris). See \cite{GS19short} for a conference proceedings version.}
\thanks{Y.G was partially supported by the National Science Center (Poland) Grant 2013/08/A/ST1/00275. Y.G and A.\'S were partially supported by the National Science Center (Poland) grant 2016/22/E/ST1/00448.}
\maketitle

\section{Introduction}\label{sec:intro}
\subsection{Overview}
In recent years, the theory of compression for analog sources (modeled by real-valued stochastic processes) underwent a major development (as a sample of such results see \cite{WV10}, \cite{WV12}, \cite{JP16}, \cite{jalali2017universal}, \cite{CompressionCS17}, \cite{stotz2013almost}, \cite{SRAB17}, \cite{GK18}). There are two key differences with the classical Shannon's model of compression for discrete sources. The first one is the necessity to employ regularity conditions on the compressor and/or decompressor functions (e.g. linearity or Lipschitz/H\"{o}lder continuity). This requirement makes the problem both non-trivial (since highly irregular bijections between $\R$ and $\R^n$ cannot be applied to obtain arbitrary small compression rates) and reasonable from the point of view of applications (since it induces robustness to noise). The second difference is the fact that non-discrete sources have in general infinite Shannon entropy rate, hence a different measure of complexity for stochastic processes must be considered. One of the most fruitful approaches taken in the literature is to assume a specific structure of the source signal. Apparently the most prominent instance of such an approach is the theory of compressed sensing, where the input vectors are assumed to be sparse. \footnotemark \footnotetext{cf. \cite{CT05}, \cite{CRT06}, \cite{Donoho06}. For a comprehensive treatment on compressed sensing see \cite{FR13}. For applications in sampling theory see \cite{eldar2015sampling}. For similar results in the more general case of signals from unions of linear subspaces see \cite{BD09} and  \cite{Blumensath11}.} In this setting, the theory of linear compression with efficient and stable recovery algorithms has been developed. However, strong assumptions posed on the structure of the signal reduce the applicability of the technique. A different approach was developed in the pioneering work of Wu and Verd\'u \cite{WV10}. Instead of making assumptions on the structure of the signal, new measures of complexity related to the box-counting (Minkowski) dimension of the signal were introduced and proved to be bounds on the compression rates for certain classes of compressors and decompressors. Jalali and Poor (\cite{JP16}, \cite{jalali2017universal}) developed the theory of universal compressed sensing for stochastic processes with the $\psi^*$-mixing property, where the compression and decompression algorithms do not require an a priori knowledge of the source distribution. The corresponding compression rates are given in terms of a certain generalization of the R\'{e}nyi information dimension. Compression algorithms for general stationary processes (assuming an a priori knowledge of process dependent compression codes) were obtained in \cite{CompressionCS17}.

The goal of this paper is twofold. We adapt the setting from \cite{WV10}, but instead of a single process we consider all stationary stochastic processes with trajectories in a prescribed set $\mS \subset [0,1]^\Z$ of (bi-)infinite sequences. This corresponds to an a priori knowledge of all possible trajectories of the process rather than its distribution. We are interested in the question of calculating the almost lossless compression rate in the sense of \cite{WV10} sufficient for all stationary stochastic processes supported in $\mS$ (i.e. taking into account the worst-case scenario) for compressors and decompressors in given regularity classes. Such a rate is universal in the sense that it can be achieved for every stationary stochastic process, independently of its distribution, as long as it fulfils the geometric constraints given by $\mS$. Universality, especially from the algorithmic point of view, is an important subject in compression theory - in the discrete case, the classical Lempel-Ziv codes \cite{LempelZiv78} attain the optimal compression rate without knowledge of the distribution of the source. Our method can be applied whenever the distribution governing the source is not known, but geometric information is given (e.g. signals emitted by the source are known to be sparse, but the corresponding probabilities are not available). We use the geometric properties of the system $\mS$ to obtain information on compression rates of the stochastic processes supported in $\mS$. Our approach can be seen as a combination of deterministic and stochastic perspectives on signal compression. Such an approach was already hinted in the classical work of Ziv \cite{Z78}, where coding theorems for individual sequences were studied and related to the Shannon's entropy theory dealing with probabilistic sources (see also \cite{KawanYuksel18, KawanYukselStoch18, LinderZamir06}).

We focus on two compression schemes: with Borel or linear compressors and H\"{o}lder (Lipschitz) decompressors.  Such or similar schemes were considered in the literature in the case of a single stochastic process (e.g. \cite{WV10, jalali2017universal, CompressionCS17, SRAB17}). Except for considering an ensemble of processes instead of a single one, there are two main differences in our approach with respect to \cite{WV10}. We consider processes with trajectories in $[0,1]^\Z$, hence we assume a uniform bound on the attained values. We also demand decompressor functions to fulfil $(L, \alpha)$-H\"{o}lder condition with fixed $L>0$ and $\alpha \in (0,1]$ for all block lengths. In particular, the Lipschitz constant $L$ has to be uniformly bounded for arbitrary long codes. This novelty guarantees improved robustness to noise. Our results are upper and lower bounds for such rates in terms of certain dynamical-geometric characteristics of the considered set $\mS$. The main result is a lower bound on the worst-case compression rate for Borel compression and $(L,\alpha)$-H\"{o}lder decompression in terms of the \emph{metric mean dimension} of the set $\mS$. We illustrate this fundamental limit by several examples, including parametrized family of sparse signal subshifts, where we show that our fundamental bound turns out to be tight (in the asymptotic range $L \to \infty$). Admitting general Borel compressors makes this result applicable to a wide class of compression schemes (e.g. both linear and ones involving quantization). This result is accompanied by two upper bounds (achievability results). The first one is an upper bound in the context of Borel compression and $(L,\alpha)$-H\"{o}lder decompression. It is obtained by employing constructions akin to the Peano space-filling curve of H\"{o}lder surjective maps between unit cubes. This type of construction would not be possible for more stringent regularity requirements for the compressor. The second upper bound is a zero-probability error bound for linear compression with $(L,\alpha)$-H\"{o}lder decompression. It is given in terms of the \emph{mean box dimension} of the set $S$ and it is deduced from a finite-dimensional embedding theorem involving the upper box-counting (Minkowski) dimension (see \cite{benartzi1993holder}). We obtain also lower bounds on compression rates for a fixed stationary process in terms of the rate-distortion dimension (see \cite{CompressionCS17}; see also \cite{GK18} for a more detailed treatment on the connections between various notions of dimension for stochastic processes).

The fact that above bounds are obtained in terms of the metric mean dimension and mean box dimension of the set $\mS$ realizes the second goal of the paper: introducing notions from the theory of dynamical systems to the study of compression rates. As we consider stationary processes, it is natural to assume the set $\mS$ to be invariant under the shift transformation and hence it can be considered as a topological dynamical system. Metric mean dimension is a geometrical invariant of dynamical systems introduced and studied by Lindenstrauss and Weiss in \cite{LW00}. Existence of connections between signal processing and mean dimension theory was observed first in \cite{GutTsu16}, where the use of sampling theorems was an essential tool for proving the embedding conjecture of Lindenstrauss (cf. \cite{L99}, see also \cite{gutman2017application}). Another connection between these domains was established recently in \cite{lindenstrauss_tsukamoto2017rate}, where a variational principle for metric mean dimension was given in terms of rate-distortion functions - objects well studied in information theory, describing the entropy rate of the quantization of a stochastic process at a given scale (for more details see \cite{Gray11}). This variational principle is our main tool in developing lower bounds on compression rates for all stationary processes supported in $\mS$. Mean box dimension is a box-counting dimension version of the projective dimension introduced by Gromov in \cite{G}. The results of this paper can be also seen as an attempt to provide operational meaning for metric mean dimension in stochastic terms (in the sense that it gives fundamental limits for the corresponding compression problem), hence they contribute to the program of relating ergodic theory to the theory of mean dimension initiated in \cite{GutTsu16}.

\subsection{Organization of the paper}

In Section \ref{sec:notation} we introduce basic notions and notations. In Section \ref{sec:analog_compression_rates} we define analog compression rates (following \cite{WV10}). In Section \ref{sec:dim} we introduce geometric and dynamical notions of dimension (box-counting dimension, metric mean dimension and mean box dimension) and illustrate them by basic examples. In Section \ref{sec:main} we state the main results of the paper. In Section \ref{sec:examples} we present several examples, which serve as illustrations to the main results. Section \ref{sec:rate_dist} contains the variational principle for metric mean dimension in the case of subshifts, which is our main tool in developing lower bounds on certain compression rates. Section \ref{sec:related_works} discusses related works by other authors. Section \ref{sec:proofs} contains proofs of the main results. Section \ref{sec:concluding_remarks} contains concluding remarks. In the Appendices we give more details on metric mean dimension theory from the perspective of dynamical systems, prove several auxiliary results and present calculations for examples from Sections \ref{sec:dim} and \ref{sec:examples}.
\numberwithin{thm}{section}
\section{Notation}\label{sec:notation}
By $\N = \{1, 2, \ldots\}$ we will denote the set of natural numbers, by $\N_0 = \{0, 1, 2, \ldots\}$ the set of natural numbers with zero and by $\Z$ the set of integers. We take the base of the logarithm to be equal to $2$. For a (finite or infinite) vector $x = (x_n)$ and $\ell \leq k$ we denote $x|_{\ell}^k = (x_{\ell}, \ldots, x_k)$. By $\#A$ we will denote the cardinality of the set $A$.

In this paper, we apply results from the theory of dynamical systems to the theory of signal processing. From the signal processing perspective, we will consider stationary stochastic process $\{ X_n : \Omega \to [0,1]\ |\ n \in \Z \}$ defined on some probability space $(\Omega, \mathbb{P})$. Usually, instead of a single process, we will be interested in considering all the stationary processes with trajectories in some prescribed set $\mS$.

A useful description of this setting can be given in terms of dynamical systems and we will usually take this perspective. By a \textbf{(topological) dynamical system} we will understand a triple $(\mX, T, \rho)$, where $(\mX, \rho)$ is a compact metric space with metric $\rho$ and $T: \mX \to \mX$ is a homeomorphism. Many notions which we consider (e.g. metric mean dimension) are metric dependent, hence we do not suppress $\rho$ from the notation. For a (countably-additive) Borel measure $\mu$ on $\mX$, by $T_* \mu$ we will denote its push-forward by $T$, i.e. a Borel measure on $\mX$ given by $T_*\mu(A) = \mu(T^{-1}(A))$ for Borel $A \subset \mX$. We will say that measure $\mu$ is \textbf{$T$-invariant}, if $T_* \mu = \mu$. By $\Prob(\mX)$ we will denote the set of all Borel probability measures on $\mX$ and by $\mP_{T}(\mX)$ the set of all $T$-invariant Borel probability measures on $\mX$. For a Borel measure $\mu$ on $\mX$ its \textbf{support} is the closed set $\supp(\mu) = \{ x \in \mX : \mu(U) > 0 \text{ for all open neighbourhoods U of } x   \}.$ This is the smallest closed set of full measure $\mu$. For an introduction to topological dynamics and its connections with ergodic theory see \cite[Chapters 5-8]{W82}.

Formally, the connection between signal processing and dynamical systems is given by considering a special class of systems: subshifts with shift transformation. Assume that $(A,d)$ is a compact metric space, which we will treat as the alphabet space. We will focus on the case $A\subset[0,1]$ with the standard metric. $A^{\Z}$ is itself a compact metrizable space when endowed with the product topology. This topology is metrizable by the product metric
\begin{equation}\label{e:product_metric} \rho(x,y) = \sum \limits_{i \in \Z} \frac{d(x_i, y_i)}{2^{|i|}},
\end{equation}
where $x = (x_i)_{i \in \Z},\ y = (y_i)_{i \in \Z}$. Note that $\diam(A^\Z, \rho) = 3\diam(A,d)$.
Define the \textbf{shift} \textbf{transformation} $\sigma: A^\Z \to A^\Z$ as \[\sigma((x_{i})_{i \in \Z})=(x_{i+1})_{i \in \Z}.\]
We will be interested in the properties of a given \textbf{subshift}, i.e. a closed (in the product topology) and shift-invariant subset $\mS \subset A^{\Z}$, which we will interpret as the set of all admissible signals that can occur in the input and treat as a dynamical system with transformation $\sigma$. For $n \in \N$ denote by $\pi_{n}:\mS\to A^{n}$ the projection\[ \pi_n(x) = x|_{0}^{n-1},\ x = (x_i)_{i \in \Z} \in \mS. \]
We will endow $[0,1]^\Z$ with a product metric as in (\ref{e:product_metric}), i.e. \begin{equation}\label{eq:tau} \tau(x,y)=\sum \limits_{i=-\infty}^{\infty}\frac{1}{2^{|i|}}|x_{i}-y_{i}|.\end{equation}
Throughout the paper, the letter $\tau$ will always denote this specific metric. It metrizes the product topology on $[0,1]^\Z$. Note that $\diam(\tau)=3$. This choice of the metric may seem arbitrary, but it turns out that metric mean dimension for subshifts takes a natural form when calculated with respect to $\tau$ (see Proposition \ref{prop:canonical}). We will consider mainly subshifts $\mS \subset [0,1]^\Z$, i.e. shift-invariant subsets of $[0,1]^\Z$ which are closed in the topology given by the metric $\tau$. In this language, for a given subshift $\mS \subset [0,1]^\Z$, the set $\mP_{\sigma}(\mS)$ consists of all distributions of stationary stochastic processes taking values in a given subshift $\mS$ (i.e. distributions of stationary stochastic processes $X_n,n \in \Z$ such that $(X_n)_{n \in \Z} \in \mS$ with probability one). For vectors $x, y \in [0,1]^n, x = (x_0, \ldots, x_{n-1}), y = (y_0, \ldots, y_{n-1})$ and $p \in [1, \infty)$ we define the (normalized) \textbf{$\ell^p$ distance} (coming from the $p$-th norm):
\[ \| x - y\|_p=\Big(\frac{1}{n}\sum_{k=0}^{n-1}|x_{k}-y_{k}|^p\Big)^{\frac{1}{p}}  \]
and
\[ \| x - y\|_\infty = \max \{ |x_k - y_k| : 1 \leq k \leq n \}. \]
The following inequalities hold on $[0,1]^n$ for $1 \leq p \leq q < \infty$ (see \cite[Chapter 3 - Exercise 5]{R87})
\[ \| \cdot \|_p \leq \| \cdot\|_q \leq \| \cdot \|_{\infty} \text{ and } \|\cdot\|_{\infty} \leq n^{\frac{1}{p}} \| \cdot \|_p.  \]
Note also that $\diam([0,1]^n, \| \cdot\|_p) = 1$ for every $n \in \N,\ p \in[1, \infty]$. For $x \in [0,1]^n$ we will denote $\supp(x) = \{ 0 \leq i < n : x_i \neq 0 \}$ and $\| x \|_0 = \#\supp(x)$.

Let $\nu$ be a Borel measure on $[0,1]$. We denote the product measure
induced by $\nu$ on $[0,1]^{\mathbb{{Z}}}$ as $\bigotimes\limits_{\mathbb{{Z}}}\nu$. It corresponds distributions of memoryless (i.i.d) sources.

A sequence $\N \ni n \mapsto a_n \in [0, \infty)$ is called \textbf{subadditive} if $a_{n+k} \leq a_n + a_k$ for every $n,k \in \N$. We will apply several times the following well-known fact (see e.g \cite[Propoition 6.2.3]{coornaert2015topological}): if $n \mapsto a_n$ is a subadditive sequence, the limit $\lim \limits_{n \to \infty} \frac{a_n}{n}$ exists and equals $\inf \limits_{n \in \N} \frac{a_n}{n}$ .

\begin{rem}\label{rem:R^N to =00005B0,1=00005D^Z} In the paper \cite{WV10}, Wu and Verd\'{u} consider processes taking values in the space $\R^\N$, whereas we will consider the space $[0,1]^\Z$. From the formal point of view, this assumption is made in order to obtain compactness of the considered space. From the point of view of applications, this corresponds to an a priori bound on signals (up to rescaling). We also consider two-sided sequences instead of one-sided - this corresponds to having an access to the past of the state of the system and makes the acting transformation $\sigma$ invertible, which is a useful assumption in dynamical systems. However, all notions  under consideration depend only on the future and the corresponding results remain unchanged in the non-invertible case.
\end{rem}
\numberwithin{thm}{subsection}
\section{Analog compression rates}\label{sec:analog_compression_rates}

In this section we introduce analog compression rates for sources with non-discrete alphabet. They measure the rate at which a given signal can be compressed assuming certain restrictions on the performance and regularity of the compression-decompression process. The definitions are analogous to the \emph{achievable rates} as defined in \cite[Definition 5]{WV10}. We consider subshifts in $[0,1]^{\Z}$ and invariant measures supported in them. In this setting it is natural to assume further constraints on the compressor and decompressor functions. This follows from the fact that the alphabet $[0,1]$ is infinite: there exists an injection from any $A \subset [0,1]^n$ into $[0,1]$. Therefore, if we do not assume any regularity of the compressor or decompressor functions, the asymptotic compression rate is always $0$. This is also true if we allow only Borel compressors and decompressors (see \cite[Section IV-B]{WV10}). On the other hand, from the point of view of applications it is also desirable to assume some regularity conditions on the compressor and decompressor functions, since they induce robustness to noise and enable numerical control of the errors occurring in the compression and decompression processes (e.g. by requiring encoders and decoders to be H\"{o}lder).

\subsection{Definitions of compression rates}\label{sec:compression_rates}
We will introduce now several regularity classes which will be used throughout the paper.
\begin{defn}
	\label{def:(regularity-classes)}
	A regularity class with respect to the norm $\| \cdot \|_p,\ p \in [1,\infty]$ is a set $\mathcal{C}$ of functions between finite dimensional unit cubes, i.e. $\mathcal{C} \subset \{ f : [0,1]^n \to [0,1]^k : n,k \in \N \}$. We will consider the following regularity classes:
	
	\begin{enumerate}
		\item \textbf{Borel} $\ \mathcal{B} = \{ f : [0,1]^n \to [0,1]^k \ \big|\ n,k \in \N,\ f \text{ is Borel}\}$
		\item \textbf{$L-$Lipschitz} for $L>0$
		\[\hfill \mathcal{L}_{L}^p = \{ f : [0,1]^n \to [0,1]^k \ \big|\ n,k \in \N,\ \|f(x)-f(y)\|_p\leq L\|x-y\|_p \text{ for all }\ x,y \in [0,1]^n \}\]
		\item \textbf{Lipschitz} \[ \mathcal{{L}} = \{ f : [0,1]^n \to [0,1]^k \ \big|\ n,k \in \N,\ \underset{L>0}{\exists}\ \|f(x)-f(y)\|_p\leq L\|x-y\|_p \text{ for all }\ x,y \in [0,1]^n \}\]
		\item \textbf{$(L,\alpha)$-H\"older} for $L > 0,\ \alpha \in (0,1]$ \[\mathcal{{H}}_{L,\alpha}^p = \{ f : [0,1]^n \to [0,1]^k \ \big|\ n,k \in \N,\ \|f(x)-f(y)\|_p\leq L\|x-y\|^{\alpha}_{p} \text{ for all }\ x,y \in [0,1]^n \}\]
		\item \textbf{$\alpha$-H\"older} for $\alpha \in (0,1]$ \[\mathcal{{H}}_{\alpha} = \{ f : [0,1]^n \to [0,1]^k \ \big|\ n,k \in \N,\ \underset{L>0}{\exists}\ \|f(x)-f(y)\|_p\leq L\|x-y\|_p^{\alpha} \text{ for all }\ x,y \in [0,1]^n \}\]
		\item \textbf{H\"older} \[\mathcal{{H}} = \{ f : [0,1]^n \to [0,1]^k \ \big|\ n,k \in \N,\ \underset{L>0}{\exists}\ \underset{\alpha \in (0,1]}{\exists}\ \|f(x)-f(y)\|_p\leq L\|x-y\|_p^{\alpha} \text{ for all }\ x,y \in [0,1]^n \}\]
		\item \textbf{Linear} \[\mathrm{LIN} = \{ f : [0,1]^n \to [0,1]^k \ \big|\ n,k \in \N,\ \underset{F : \R^n \to \R^k}{\exists}\ F \text{ is linear and } f = F|_{[0,1]^n} \}\]
	\end{enumerate}
\end{defn}

\begin{rem}
Note that we omit $p$ in the notation for classes $\mB$ and $\LIN$, as they clearly do not depend on the choice of the norm. Classes $\mH, \mH_{\alpha}$ and $\mathcal{L}$ are also independent of $p$, as all norms on a finite-dimensional real vector space are equivalent, yet note that the Lipschitz constant of a given map in any of these classes may depend on the choice of the norm.
\end{rem}

\begin{rem}
	Notice that the identity map $\id:[0,1]^{n}\rightarrow[0,1]^{n}$ belongs to all of the
	above classes, except for $\mathcal{L}_L$ and $\mathcal{H}^p_{L, \alpha}$. Indeed for those two classes, we have $\id \in \mathcal{L}_L, \mathcal{H}^p_{L, \alpha}$ if and only if $L\geq 1$.
\end{rem}

Let us define now three compression rates that we will study throughout the paper.

\begin{defn}
	\label{def:ratio is achievable} (See \cite[Definition 3]{WV10}) Let $\mu\in  \mP_{\sigma}([0,1]^\Z)$. Let $\mathcal{{C}},\mathcal{{D}} \subset \{ f : [0,1]^n \to [0,1]^k : n, k \in \N \}$
	be regularity classes. For $n \in \N$ and $\eps \geq 0$, the $\mathcal{C}-\mathcal{D}$ \textbf{almost lossless analog compression rate $\r_{\mathcal{C}-\mathcal{D}}(\mu,\eps,n)\geq0$} of $\mu$ with $n$-block error probability $\eps$ is the infimum of
	\[\frac{k}{n},\]
	where $k$ runs over all natural numbers such that there exist maps
	\[
	f:[0,1]^{n}\rightarrow[0,1]^{k},\ f\in\mathcal{{C}}\ \text{ and }\ g:[0,1]^{k}\rightarrow[0,1]^{n},\ g\in\mathcal{{D}}
	\]
	with
	\[
	\mu(\{x\in [0,1]^\Z |\ g\circ f(x|_{0}^{n-1})\neq x|_{0}^{n-1}\})\leq\eps.
	\]
	Define further:
	\[
	\r_{\mathcal{C}-\mathcal{D}}(\mu,\eps)=\limsup_{n\rightarrow\infty}\ \r_{\mathcal{C}-\mathcal{D}}(\mu,\eps,n).
	\]
\end{defn}

In the next definition, we fix a subshift $\mS \subset [0,1]^\Z$ and consider compressors and decompressors providing almost lossless compression for all measures $\mu \in \mP_{\sigma}(\mS)$. In such a case, compression can be performed without knowledge of the distribution from which the signal comes, as long as the process is supported in $\mS$.

\begin{defn}\label{def:uniform ratio is achievable} Let
	$\mS \subset [0,1]^{\mathbb{Z}}$ be a subshift. Let $\mathcal{{C}},\mathcal{{D}} \subset \{ f : [0,1]^n \to [0,1]^k : n, k \in \N \}$
	be regularity classes. For $n \in \N$ and $\eps \geq 0$, the $\mathcal{C}-\mathcal{D}$ \textbf{uniform almost lossless analog compression rate $\r_{\mathcal{C}-\mathcal{D}}(\mS,\eps,n)\geq0$} of $\mS$ with $n$-block error probability $\eps$ is the infimum of
	\[\frac{k}{n},\]
	where $k$ runs over all natural numbers such that there exist maps
	\[
	f:[0,1]^{n}\rightarrow[0,1]^{k},\ f\in\mathcal{{C}}\ \text{ and }\ g:[0,1]^{k}\rightarrow[0,1]^{n},\ g\in\mathcal{{D}}
	\]
	such that
	\[
	\mu(\{x\in \mS|\ g\circ f(x|_{0}^{n-1})\neq x|_{0}^{n-1}\})\leq\eps \ \text{holds for all } \mu \in \mP_{\sigma}(\mS).
	\]
	Define further:
	\[
	\r_{\mathcal{C}-\mathcal{D}}(\mS,\eps)=\limsup_{n\rightarrow\infty}\ \r_{\mathcal{C}-\mathcal{D}}(\mS,\eps,n).
	\]
\end{defn}

We introduce also an $L^p$ variant.

\begin{defn} \label{def:LP_ratio is achievable}
	Let $\mu\in  \mP_{\sigma}([0,1]^\Z)$. Let $\mathcal{{C}},\mathcal{{D}} \subset \{ f : [0,1]^n \to [0,1]^k : n, k \in \N \}$ 	be regularity classes with respect to the norm $\| \cdot \|$. \footnotemark \footnotetext{recall that this is recorded in the notation, e.g. $\mH^p_{L,\alpha}$ denotes the set of $(L, \alpha)$-H\"{o}lder maps between unit cubes with respect to the norm $\| \cdot \|_p$} For $n \in \N,\ \eps \geq 0$ and $p \in [1, \infty)$, the $\mathcal{C}-\mathcal{D}$ \textbf{$L^p$-analog compression rate $\r_{\mathcal{C}-\mathcal{D}}^{L^p}(\mu,\eps,n)\geq0$} of $\mu$ with $n$-block mean error $\eps$ is the infimum of
	\[\frac{k}{n},\]
	where $k$ runs over all natural numbers such that there exist maps
	\[
	f:[0,1]^{n}\rightarrow[0,1]^{k},\ f\in\mathcal{{C}}\ \text{ and }\ g:[0,1]^{k}\rightarrow[0,1]^{n},\ g\in\mathcal{{D}}
	\]
	with
	\[
	\int \limits_{[0,1]^n} \|x - g \circ f (x)\|^p d (\pi_n)_* \mu (x) \leq \eps^p.
	\]
	Define further:
	\[
	\r_{\mathcal{C}-\mathcal{D}}^{L^p}(\mu,\eps)=\limsup_{n\rightarrow\infty}\ \r_{\mathcal{C}-\mathcal{D}}^{L^p}(\mu,\eps,n).
	\]
\end{defn}

Note that in this work we will consider the $L^p$-compression rate for regularity classes with respect to the matching norm $\|\cdot\|_p$. 

\begin{rem}\label{r:rates_infty}
All of the above compression rates are non-increasing in $\eps$, i.e. if $\eps \leq \eps'$ then $r(\mu,\eps,n)\geq r(\mu,\eps',n)$ and $r(\mu,\eps)\geq r(\mu,\eps')$. Let $\mathcal{{C}},\mathcal{{D}}$ be regularity classes such that $\id|_{[0,1]^n}\in\mathcal{{C}},\mathcal{{D}}$ for every $n \in \N$. Then $r(\mu,\eps) \leq 1$ holds for all $\mu\in\mathcal{{P}}_{\sigma}(\mS)$ and $\eps \geq 0$.
	If $\mC$ and $\mD$ are regularity classes with respect to a norm such that $\diam([0,1]^n, \|\cdot\|)=1$ for every $n \in \N$ (note that we normalize norms $\| \cdot \|_p,\ p \in [1,\infty]$ such that this holds), then
	\begin{equation}\label{e:rate_comparision}
 \r_{\mathcal{C}-\mathcal{D}}^{L^p}(\mu,\eps) \leq \r_{\mathcal{C}-\mathcal{D}}(\mu,\eps^p).
	\end{equation}
	for $1 \leq p < \infty$ and $\mu \in \mP_{\sigma}(\mS)$.\\
\end{rem}

\begin{rem}\label{rem:extension_theorems}
	One could have used a seemingly more permissive definition with the compressor function defined only on a set of words of length $n$ occuring in $\mS$, i.e. with conditions \begin{equation}\label{eq:restriced_compressors}
	f:\pi_{n}(\mS)\subset[0,1]^{n}\rightarrow[0,1]^{k},f\in\mathcal{{C}} \text{ and } g:f(\pi_{n}(\mS))\subset[0,1]^{k}\rightarrow[0,1]^{n},g\in\mathcal{{D}}.
	\end{equation}
	However, if one can apply suitable extension theorems to extend functions $f,g$ to the full cubes $[0,1]^n$ and $[0,1]^k$, respectively, while maintaining assumed regularity, then this would have resulted with an equivalent definition. All of the classes in Definition \ref{def:(regularity-classes)} other than $\mathrm{LIN}$ admit such an extension when considered with respect to the $\|\cdot\|_{\infty}$ norm. In particular, for classes $\mathcal{H}_{L, \alpha}^\infty$ this follows from Banach's theorem on extension of H\"older and Lipschitz functions, stating that a real-valued function defined on a subset $A$ of a metric space and satisfying $|f(x)-f(y)|\leq Ld(x,y)^{\alpha}$ for all $x,y\in A$, can be extended to the whole metric space so as to satisfy the same inequality (\cite[Theorem IV.7.5]{Banach51}, see also \cite{minty1970extension}). Therefore, given a function $f : A \to [0,1]^k,\ A \subset [0,1]^n,\ f = (f_1, \ldots, f_k)$ satisfying $\|f(x) - f(y)\|_{\infty} \leq L \|x-y\|_{\infty}^{\alpha}$ on $A$, one can apply Banach's theorem to coordinate functions $f_1, \ldots, f_k$ (clearly satisfying $|f_j(x) - f_j(y)| \leq L \|x - y\|^{\alpha}_{\infty}$), obtaining extensions $\tilde{f_1}, \ldots, \tilde{f_k} : [0,1]^n \to \R$. Then $\overline{f}_1, \ldots , \overline{f}_k : [0,1]^n \to [0,1]$ given as $\overline{f}_j(x) = \max \{ \min\{ f(x), 1 \}, 0 \}$ are again $(L,\alpha)$-H\"{o}lder and satisfy $\overline{f}_j = f_j$ on $A$. Therefore $\overline{f} = (\overline{f}_1, \ldots, \overline{f}_k)$ is an extension of $f$ from $A$ to $[0,1]^n$ and
	\[ \|\overline{f}(x) - \overline{f}(x)\|_{\infty} = \max \limits_{1 \leq j \leq k} \|\overline{f}_j(x) - \overline{f}_j(y)\| \leq L \|x - y\|^{\alpha}_{\infty}.\]
	This yields an extension theorem for $\mathcal{H}_{L, \alpha}^\infty$. By \cite[Theorem 1]{minty1970extension}, analogous result is true for the class $\mathcal{H}_{L, \alpha}^2$. The above considerations apply also to $\mathcal{H}_{L, \alpha}^p$ for other $p \in [1, \infty)$, however the extended function might have a worse Lipschitz constant $L$. Namely, repeating the above argument for a function $f : A \to [0,1]^k$ satisfying $\|f(x) - f(y)\|_p \leq L \|x-y\|_p^{\alpha}$, the coordinate functions $f_j$ will satisfy $|f_j(x) - f_j(y)| \leq k^{\frac{1}{p}} \|f(x) - f(y)\|_p \leq k^{\frac{1}{p}}L \|x-y\|_p^{\alpha}$. Therefore, the extended function $\overline{f}$ is guaranteed to satisfy
	\[ \|f(x) - f(y)\|_p \leq k^{\frac{1}{p}}L \|x - y\|_p^{\alpha}. \]
In other words, $(L,\alpha)$-H\"{o}lder function in the norm $\|\cdot\|_p$ can be extended to an element of $\mH^p_{k^\frac{1}{p}L,\alpha}$. It is known that in general one cannot have an extension theorem for H\"{o}lder maps preserving the Lipschitz constant for $p \in (1,\infty) \setminus \{ 2\}$ (see \cite[Section 1.30]{SchwartzNFA}). In conclusion, if $\mC, \mD \in \{ \mB, \LIN, \mH_{\alpha}, \mH^{\infty}_{L,\alpha}, \mH^{2}_{L,\alpha}\}$, then (\ref{eq:restriced_compressors}) will result in equivalent definitions of compression rates.
\end{rem}

\section{Mean dimensions}\label{sec:dim}

In this section we will introduce two dynamical notions of mean dimensions: \emph{metric mean dimension} and \emph{mean box dimension}. They attempt to capture the average number of dimensions per time unit required to describe a given system and can serve as complexity measures of a subshift. The main results of this paper are certain bounds on compression rates in terms of these mean dimensions.

\subsection{Box dimension}
Let us begin by introducing the non-dynamical notion of box-counting dimension. It will serve a basis for dynamical definitions of mean dimensions.
\begin{defn}\label{def:hashtag} Let $(\mX, \rho)$ be a compact metric space. For $\eps > 0$, the \textbf{$\eps$-covering number} of a subset $A \subset \mX$, denoted by $\#(A,\rho,\varepsilon)$, is the minimal cardinality $N$ of an open cover $\{U_{1},\dots,U_{N}\}$ of $A$ such that all $U_{n}$ have diameter smaller than $\varepsilon$. The \textbf{$\eps$-net} in $A$ is a subset $E \subset A$ such that for every $x \in A$ the distance $\dist(x, E) := \inf \limits_{y \in E} \rho(x,y)$ is strictly smaller than $\eps$.
\end{defn}
Note that in $A$ there exists an $\eps$-net of cardinality $\#(A, \rho, \eps)$. When the metric $\rho$ is coming from a norm $\| \cdot \|$ (i.e. $\rho(x,y) = \| x - y \|$), we will write simply $\#(A,\| \cdot \|,\eps)$ for $\#(A,\rho,\varepsilon)$.

\begin{defn}
	Let $(\mX, \rho)$ be a compact metric space. The \textbf{upper box-counting (Minkowski) dimension} of $A\subset\mX$
	is defined as
	\[\udim(A)=\limsup \limits_{\varepsilon\to0}\frac{\log\#(A,\rho,\varepsilon)}{\log\frac{1}{\varepsilon}}.\]
	
	Similarly the \textbf{lower box-counting dimension} of $A$ is defined as
	\[\ldim(A)=\liminf \limits_{\varepsilon\to0}\frac{\log\#(A,\rho,\varepsilon)}{\log\frac{1}{\varepsilon}}.\]
	If upper and lower limits coincide, then we call its common value the \textbf{box-counting dimension} of $A$ and denote it by $\mathrm{dim}_B(A)$.
\end{defn}

In the sequel we will consider primarily sets $A \subset [0,1]^n$ with the distance induced by the norm $\| \cdot \|_{\infty}$. Note that, any other equivalent metric on $[0,1]^n$ (e.g. $\| \cdot \|_p,\ p \in [1, \infty)$) will give the same values of box-counting dimensions. Some of the basic properties of box-counting dimensions are summarized in the next proposition (see e.g. \cite[Prop. 3.3 and 3.4]{Rob11}).
\begin{prop}\label{prop:dim_properties}
Let $A,B$ be subsets of compact metric spaces. Then
\begin{enumerate}
\item if $A \subset B$, then $\udim(A) \leq \udim(B)$ and $\ldim(A) \leq \ldim(B)$
\item\label{bdim_sum} $\udim(A \cup B) \leq \max\{ \udim(A), \udim(B)\}$
\item\label{bdim_submult} $\udim(A \times B) \leq \udim(A) + \udim(B)$
\item $\dim_B([0,1]^n)=n$
\end{enumerate}
\end{prop}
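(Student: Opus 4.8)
The statement to prove is Proposition \ref{prop:dim_properties}, collecting four standard properties of upper and lower box-counting dimensions. My plan is to work directly from the definitions via the $\eps$-covering numbers $\#(A,\rho,\eps)$, since every property reduces to an elementary inequality between covering numbers of the sets involved, followed by taking (upper or lower) limits in the quotient $\log\#(A,\rho,\eps)/\log(1/\eps)$. Throughout I use that $\#(\cdot,\rho,\eps)$ is monotone under inclusion: if $A\subset B$, then any $\eps$-cover of $B$ restricts to an $\eps$-cover of $A$, so $\#(A,\rho,\eps)\le\#(B,\rho,\eps)$. For the limit-taking I only need that $\limsup$ and $\liminf$ are monotone with respect to pointwise inequality of functions of $\eps$ (and that $\log(1/\eps)>0$ for $\eps<1$, so dividing preserves inequalities).

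For (1), monotonicity of covering numbers gives $\log\#(A,\rho,\eps)\le\log\#(B,\rho,\eps)$ for all small $\eps$; dividing by $\log(1/\eps)>0$ and taking $\limsup_{\eps\to 0}$ yields $\udim(A)\le\udim(B)$, and taking $\liminf$ yields $\ldim(A)\le\ldim(B)$. For (2), the key combinatorial fact is that the union of an $\eps$-cover of $A$ and an $\eps$-cover of $B$ is an $\eps$-cover of $A\cup B$, hence $\#(A\cup B,\rho,\eps)\le\#(A,\rho,\eps)+\#(B,\rho,\eps)\le 2\max\{\#(A,\rho,\eps),\#(B,\rho,\eps)\}$; taking logarithms, the additive constant $\log 2$ is absorbed after dividing by $\log(1/\eps)\to\infty$, and using that $\limsup$ of a max is the max of the $\limsup$'s (which follows from monotonicity applied in both directions) we get $\udim(A\cup B)\le\max\{\udim(A),\udim(B)\}$. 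For (3), if $\{U_i\}$ is an $\eps$-cover of $A$ and $\{V_j\}$ an $\eps$-cover of $B$, then $\{U_i\times V_j\}$ covers $A\times B$ and, with the product (e.g. max) metric, each $U_i\times V_j$ has diameter at most $\eps$; thus $\#(A\times B,\rho,\eps)\le\#(A,\rho,\eps)\cdot\#(B,\rho,\eps)$, so $\log\#(A\times B,\rho,\eps)\le\log\#(A,\rho,\eps)+\log\#(B,\rho,\eps)$, and taking $\limsup$ together with the subadditivity of $\limsup$ gives the claimed inequality. (One should note the metric on the product is the natural one — the max metric, say — which does not affect box dimension as remarked after Proposition \ref{prop:dim_properties}.)

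For (4) one computes the covering numbers of $[0,1]^n$ with the $\|\cdot\|_\infty$ metric exactly. Partitioning $[0,1]$ into $\lceil 1/\eps\rceil$ intervals of length at most $\eps$ gives a product grid of $\lceil 1/\eps\rceil^n$ cubes of $\|\cdot\|_\infty$-diameter at most $\eps$, so $\#([0,1]^n,\|\cdot\|_\infty,\eps)\le\lceil 1/\eps\rceil^n$; conversely, any set of $\|\cdot\|_\infty$-diameter less than $\eps$ has $n$-dimensional Lebesgue measure less than $\eps^n$, and since $[0,1]^n$ has measure $1$ one needs at least $\eps^{-n}$ such sets, giving $\#([0,1]^n,\|\cdot\|_\infty,\eps)\ge\eps^{-n}$. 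Taking logarithms, dividing by $\log(1/\eps)$, and letting $\eps\to 0$, both bounds converge to $n$, so the box-counting dimension exists and equals $n$. The main (though still routine) obstacle is just bookkeeping the ceiling function and the additive constants so that they vanish in the limit; everything else is a direct and short consequence of the definitions, and indeed the proposition is quoted from \cite[Prop.~3.3 and 3.4]{Rob11}, so it suffices to indicate these arguments briefly.
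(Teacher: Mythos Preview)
Your proof is correct and follows the standard arguments from the definitions; the paper itself does not give a proof but simply refers to \cite[Prop.~3.3 and 3.4]{Rob11}, so your write-up is essentially what one finds in that reference. One minor remark: in (2) you invoke ``$\limsup$ of a max is the max of the $\limsup$'s'', but all you actually need (and use) is the inequality $\limsup_{\eps\to 0}\max\{a_\eps,b_\eps\}\le\max\{\limsup a_\eps,\limsup b_\eps\}$, which follows since eventually $a_\eps$ and $b_\eps$ are both below $\max\{\limsup a_\eps,\limsup b_\eps\}+\delta$ for any $\delta>0$.
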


 For more on the dimension theory see \cite{falconer2004fractal}, \cite{mattila} and \cite{Rob11}.
\subsection{Metric mean dimension and mean box dimension}
The notion that we will be mostly interested in is the \emph{metric mean dimension}:

\begin{defn}\label{df:mmdim_subshift} Let $\mS \subset [0,1]^\Z$ be a subshift. 	The upper and lower \textbf{metric mean dimensions} of the system $\mS$ are defined as
	\[
	\ummdim(\mS)=\limsup_{\varepsilon\to0}\lim_{n\rightarrow\infty}\frac{\log\#(\pi_{n}(\mS),||\cdot||_{\infty},\varepsilon)}{n\log\frac{1}{\varepsilon}}
	\]
and	
	\[
	\lmmdim(\mS)=\liminf_{\varepsilon\to0}\lim_{n\rightarrow\infty}\frac{\log\#(\pi_{n}(\mS),||\cdot||_{\infty},\varepsilon)}{n\log\frac{1}{\varepsilon}}.
	\]
	(the limit with respect to $n$ exists as the sequence $n \mapsto \log\#(\pi_{n}(\mS),||\cdot||_{\infty},\varepsilon)$ is subadditive). If upper and lower limits coincide, then we call its common value the \textbf{metric mean dimension} of $\mS$ and denote it by $\mmdim(\mS)$.
\end{defn}

The study of mean dimension type invariants in dynamical systems goes back to Gromov \cite{G}. The notion of metric mean dimension was introduced by Lindenstrauss and Weiss in \cite{LW00}. The above formula is valid for subshifts in $[0,1]^\Z$, however metric mean dimension was originally  defined by a different (but equivalent) formula, which applies to any dynamical system on a compact metric space (see Proposition \ref{prop:canonical}). In Appendix \ref{app:mmdim} we present the general definition, the proof of equivalence and more remarks on metric mean dimension from the point of view of dynamical systems. We will consider one more notion of mean dimension. It is a box-counting dimension version of the projective dimension introduced by Gromov in \cite{G}.

\begin{defn}
For a subshift $\mS\subset[0,1]{}^{\mathbb{{Z}}}$ we define its upper \textbf{mean
	box dimension} as
\[
\mbdim(\mS)=\lim_{n\to\infty}\frac{\overline{\dim}_{B}(\pi_{n}(\mS))}{n},
\]
where $\udim(\pi_n(\mS))$ is calculated with respect to any $\| \cdot \|_p$ norm on $[0,1]^n$ (the limit exists as the sequence $n\mapsto\overline{\dim}_{B}(\pi_{n}(\mS))$ is subadditive, since $\pi_{n+k}(\mS) \subset \pi_n(\mS) \times \pi_k(\mS)$).
\end{defn}

Note that $\mbdim(\mS)$ is defined as $\ummdim(\mS)$, but with exchanged order of limits. They fulfill the following inequality, which is an analog of Gromov's pro-mean inequality \cite[Subsection 1.9.1]{G}.
\begin{prop}
	\label{prop:mdim leq mbdim} Let $\mS\subset[0,1]^{\Z}$ be a subshift. Then
	\[
	\overline{\mdim}_{M}(\mS)\leq\mbdim(\mS).
	\]
\end{prop}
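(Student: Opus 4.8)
The plan is to unwind both definitions and compare them scale by scale. Fix $\eps > 0$. For each $n$ set $a_n(\eps) = \log\#(\pi_n(\mS), \|\cdot\|_\infty, \eps)$, which is subadditive in $n$, so $\lim_{n\to\infty} a_n(\eps)/n = \inf_n a_n(\eps)/n$ exists. The key observation is that for a fixed $n$, the covering number $\#(\pi_n(\mS), \|\cdot\|_\infty, \eps)$ as a function of $\eps$ controls $\udim(\pi_n(\mS))$: by definition $\udim(\pi_n(\mS)) = \limsup_{\eps \to 0} a_n(\eps)/\log\frac1\eps$. So the heart of the matter is the interchange of the $\limsup_{\eps\to 0}$ (defining box dimension for each fixed $n$) with the limit in $n$.

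First I would show the easy direction of bookkeeping: for every $\eps > 0$,
\[
\lim_{n\to\infty} \frac{a_n(\eps)}{n} \;=\; \inf_n \frac{a_n(\eps)}{n} \;\le\; \frac{a_{n_0}(\eps)}{n_0}
\]
for every $n_0$, and hence
\[
\frac{1}{\log\frac1\eps}\lim_{n\to\infty}\frac{a_n(\eps)}{n} \;\le\; \frac{1}{n_0}\cdot\frac{a_{n_0}(\eps)}{\log\frac1\eps}.
\]
Taking $\limsup_{\eps\to 0}$ on both sides, the left side becomes $\ummdim(\mS)$ and the right side becomes $\frac{1}{n_0}\udim(\pi_{n_0}(\mS))$. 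Since this holds for every $n_0 \in \N$, we may pass to the infimum over $n_0$, and because $n \mapsto \udim(\pi_n(\mS))$ is subadditive (as noted in the excerpt, via $\pi_{n+k}(\mS)\subset \pi_n(\mS)\times\pi_k(\mS)$ together with Proposition~\ref{prop:dim_properties}(\ref{bdim_submult})), $\inf_{n_0} \frac{1}{n_0}\udim(\pi_{n_0}(\mS)) = \lim_{n_0\to\infty}\frac{1}{n_0}\udim(\pi_{n_0}(\mS)) = \mbdim(\mS)$. This chain gives exactly $\ummdim(\mS) \le \mbdim(\mS)$.

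The one subtlety to handle carefully — and the step I expect to be the main (mild) obstacle — is justifying that $\limsup_{\eps\to 0}$ of the pointwise inequality is legitimate and that the norm used does not matter: $\ummdim$ is defined with $\|\cdot\|_\infty$ while $\mbdim$ is defined with any $\|\cdot\|_p$, but by the equivalence of norms on $[0,1]^{n_0}$ (with the explicit comparisons $\|\cdot\|_p \le \|\cdot\|_\infty \le n_0^{1/p}\|\cdot\|_p$ recorded in Section~\ref{sec:notation}), the box dimension $\udim(\pi_{n_0}(\mS))$ is the same for all these norms, so no constant is lost in the limit. Once this is in place, the argument above is complete; I would write it as the short two-line display chain, then remark that the inequality is in general strict (this is the analog of Gromov's observation that the pro-dimension can exceed the mean dimension), perhaps pointing forward to an example where $\ummdim(\mS) < \mbdim(\mS)$.
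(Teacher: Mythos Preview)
Your proof is correct and takes essentially the same approach as the paper: both exploit the subadditivity of $n\mapsto\log\#(\pi_n(\mS),\|\cdot\|_\infty,\eps)$ to bound the $n$-limit by the value at a fixed $n_0$, then identify $\limsup_{\eps\to 0}$ of the right-hand side as $\frac{1}{n_0}\udim(\pi_{n_0}(\mS))$ and finally take the infimum over $n_0$. Your presentation is in fact slightly cleaner than the paper's, which carries an auxiliary $\eta>0$ and threshold $\eps_0$ through the argument rather than working directly with $\limsup$ and $\inf$.
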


For the proof see Appendix \ref{app:proof_pro_mean_ineq}.
\subsection{Examples}
Let us present now several examples (with some of the calculations postponed to Appendix \ref{app:examples}).
\begin{example}\label{ex:general_full_shift} Let $A \subset [0,1]$ be a closed subset. Then $\mS = A^\Z \subset [0,1]^\Z$ is a subshift satisfying
\begin{equation}\label{eq:full_shift_mmdim_mbdim}
\ummdim(\mS) = \mbdim(\mS) = \udim(A).
\end{equation}
By Proposition \ref{prop:mdim leq mbdim} it is enough to prove
\[ \udim(A) \leq  \ummdim(\mS)\ \text{ and }\ \mbdim(\mS) \leq \udim(A) \] 
The lower bound follows from \cite[Thm. 5]{velozo2017rate}. For the upper bound note that $\pi_n(\mS) = A^n$, hence by Proposition \ref{prop:dim_properties}.\ref{bdim_submult}, $\udim(\pi_n(\mS)) = \udim(A^n) \leq n \udim(A)$.
\end{example}

In general, equality does not hold in Proposition \ref{prop:mdim leq mbdim}:
\begin{example}\label{e:mmdim_0_mbdim_1}
	For $m\geq1$ let
	\[A_{m}:=\{ (\ldots, 0, 0)\} \times [0,\frac{1}{2^{m}}]^{m} \times \{ (0,0, \ldots) \}\subset[0,1]^{\Z},\]
	where the cube $[0,\frac{1}{2^{m}}]^{m}$ is located on coordinates $0, 1, \ldots, m-1$. Set
	\[ \mS_m = \bigcup\limits_{n\in \Z}\sigma^{n}(A_{m}) \text{ and }\mS:=\bigcup\limits_{m\geq1} \mS_m.\]
	Then $\mS$ is a subshift satisfying
	\[ \mmdim(\mS) = 0\ \text{ and }\ \mbdim(\mS) = 1.\]
	For calculations see Appendix \ref{app:mmdim_0_mbdim_1}.
\end{example}

Let us now consider an example which arises naturally in the context of compressed sensing for sparse signals. It will be our main example throughout the paper.

\begin{example}\label{ex:sparse_subshift}
Fix $K, N \in \N$ with $K \leq N$. Let $\mS \subset [0,1]^\Z$ be the subshift consisting of $(N, K)$-\emph{sparse} vectors, i.e.
\[ \mS = \{ x \in [0,1]^\Z : \underset{j \in \Z}{\forall}\ \|x|_{j}^{j+N-1} \|_0 \leq K\}. \]
We will call $\mS$ the \textbf{$(N,K)$-sparse subshift}. It satisfies
\[ \mmdim(\mS) = \mbdim(\mS) = \frac{K}{N}. \]
Let us prove these equalities. By Proposition \ref{prop:mdim leq mbdim}, it is enough to prove
\[\frac{K}{N} \leq \lmmdim(\mS)\ \text{ and }\ \mbdim(\mS) \leq \frac{K}{N}.\] For the upper bound, observe that $\pi_{lN}(\mS)$ for $\ell \in \N$ is a finite union of sets of the upper box-counting dimension at most $\ell K$ (as every $x \in \pi_{\ell N}(\mS)$ satisfies $\|x\|_0 \leq \ell K$). Consequently, by Proposition \ref{prop:dim_properties}.\ref{bdim_sum}, $\udim(\pi_{\ell N}(\mS)) \leq \ell K$, hence $\mbdim(\mS) \leq \frac{K}{N}$. For the lower bound, note that $\pi_{\ell N}(\mS)$ contains the set $A_\ell := ([0,1]^K \times \{ 0 \}^{N-K})^\ell$. For fixed $\eps>0$ we have therefore (see e.g. \cite[Section 2.2]{falconer2004fractal})
\[ \#(\pi_{lN}(\mS),||\cdot||_{\infty},\eps) \geq \#(A_\ell,||\cdot||_{\infty},\eps) \geq \Big(\frac{1}{\eps}\Big)^{K\ell}, \]
hence
\[ \lmmdim(\mS) = \liminf \limits_{\eps \to 0}\ \lim_{\ell \rightarrow\infty}\frac{\log\#(\pi_{\ell N}(\mS),||\cdot||_{\infty},\varepsilon)}{\ell N\log\frac{1}{\varepsilon}} \geq \liminf \limits_{\eps \to 0}\ \lim_{\ell \rightarrow\infty}\ \frac{\ell K \log \frac{1}{\eps} }{\ell N \log \frac{1}{\eps}} = \frac{K}{N}. \]
\end{example}

\section{Main results}\label{sec:main}
\numberwithin{thm}{section}
In this section we present the main results of the paper. Instead of assuming specific properties for the measure governing the source, we consider the setting in which the set of all possibles trajectories is known and we look for compression rates corresponding to the worst-case scenario. Therefore the following question is our main question:\\ \ \\
\textbf{Main Question:} Given a subshift $\mS \subset [0,1]^\Z$, calculate
\[\sup \limits_{\eps > 0} \sup \limits_{\mu\in\mathcal{P}_{\sigma}(\mS)}\r_{\mathcal{{C}}-\mathcal{{D}}}(\mu,\eps), \ \sup \limits_{\eps > 0} \ \sup \limits_{\mu\in\mathcal{P}_{\sigma}(\mS)} \ \r^{L^p}_{\mathcal{{C}}-\mathcal{{D}}}(\mu,\eps),\ \sup \limits_{\eps > 0}\ \r_{\mathcal{{C}}-\mathcal{{D}}}(\mS,\eps) \text{ and } \r_{\mathcal{C}-\mathcal{D}}(\mS, 0)\] for fixed regularity classes $\mathcal{C}$ and $\mathcal{D}$.\\

The above suprema correspond to rates which can be achieved whenever geometric information on the signal is given (i.e. $\mS$ is known), but its statistics are unknown (i.e. the particular $\mu \in \mS$ which governs the source is not known) and can be a priori given by any stationary distribution supported in $\mS$.

We will be mainly interested in this question for $\mathcal{C} \in \{\mathcal{B}, \mathrm{LIN}\}$ and $\mathcal{D}=\mathcal{H}^p_{L,\alpha}$. Such or similar regularity conditions have appeared previously in the literature (e.g. in \cite{WV10, jalali2017universal, CompressionCS17}. See Section \ref{sec:related_works} for a more detailed discussion). Our goal is to connect the above uniform compression rates with geometric-dynamical invariants of the given subshift $\mS$ - its mean dimensions.

\begin{rem} Note that for any compression variant it holds
	\[ \sup \limits_{\mu\in\mathcal{P}_{\sigma}(\mS)} \ \sup \limits_{\eps > 0} \  \r_{\mathcal{{C}}-\mathcal{{D}}}(\mu,\eps) = \sup \limits_{\eps > 0} \ \sup \limits_{\mu\in\mathcal{P}_{\sigma}(\mS)} \ \r_{\mathcal{{C}}-\mathcal{{D}}}(\mu,\eps) = \]
	\[ = \lim \limits_{\eps \to 0} \  \sup \limits_{\mu\in\mathcal{P}_{\sigma}(\mS)} \ \r_{\mathcal{{C}}-\mathcal{{D}}}(\mu,\eps) = \sup \limits_{\mu\in\mathcal{P}_{\sigma}(\mS)} \ \lim \limits_{\eps \to 0} \   \r_{\mathcal{{C}}-\mathcal{{D}}}(\mu,\eps). \]
\end{rem}
Below we formulate the main results of the paper. For their proofs see Section \ref{sec:proofs}.

\subsection{Lower bounds}

\numberwithin{thm}{subsection}

Our main result is a worst-case fundamental limit on $\sup \limits_{\eps>0}\ \r^{L^p}_{\mathcal{B} - \mathcal{H}^p_{L, \alpha}}(\mu, \eps)$ (and therefore on $\sup \limits_{\eps>0}\ \r_{\mathcal{B} - \mathcal{H}^p_{L, \alpha}}(\mu, \eps)$ -- see (\ref{e:rate_comparision})) for $\mu \in \mP_{\sigma}(\mS)$ in terms of the metric mean dimension of $\mS$. This corresponds to the setting where the distribution of the signal is not known, but the set $\mS$ of all possible trajectories is known (i.e. one has knowledge of the words that can occur in the input, but not their probabilities) and one is interested in the worst (largest) compression rate which is required to code the signal.

\begin{thm}\label{thm:main_holder_low} Let $\mS \subset [0,1]^\Z$ be a subshift. The following holds for
	every $0<\alpha \leq 1, L>0$ and $p \in [1, \infty)$:
	\[
	\alpha \ummdim(\mS)\leq \sup \limits_{\eps>0}\ \sup \limits_{\mu\in  \mP_{\sigma}(\mS)}\ \r_{\mathcal{{B}}-\mathcal{{H}}_{L,\alpha}^p}^{L^p}(\mu,\eps) \leq \sup \limits_{\eps>0}\ \sup \limits_{\mu\in  \mP_{\sigma}(\mS)}\ \r_{\mathcal{{B}}-\mathcal{{H}}_{L,\alpha}^p}(\mu,\eps).\]
	Similarly for $p=\infty$
	\[ \alpha \ummdim(\mS)\leq \sup \limits_{\eps>0}\ \sup \limits_{\mu\in  \mP_{\sigma}(\mS)}\ \r_{\mathcal{{B}}-\mathcal{{H}}_{L,\alpha}^\infty}(\mu,\eps). \]
\end{thm}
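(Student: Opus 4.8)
The plan is to derive the lower bound from the rate--distortion characterization of metric mean dimension proved in Section~\ref{sec:rate_dist} (the Lindenstrauss--Tsukamoto variational principle for subshifts), which expresses $\ummdim(\mS)$ in the form $\limsup_{\eps\to0}(\log\tfrac1\eps)^{-1}\sup_{\mu\in\mP_\sigma(\mS)}R_\mu(\eps)$, where $R_\mu$ is the rate--distortion function of $\mu$ with respect to the relevant per-block distortion ($\|\cdot\|_p$ for the $L^p$-statement, $\|\cdot\|_\infty$ for the last assertion). The middle inequality of the theorem is immediate from (\ref{e:rate_comparision}) together with the monotonicity of $\eps\mapsto\eps^p$ on $(0,\infty)$, so the whole content is the outer inequality $\alpha\,\ummdim(\mS)\le\sup_{\eps>0}\sup_\mu\r^{L^p}_{\mathcal{B}-\mathcal{H}^p_{L,\alpha}}(\mu,\eps)$. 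The strategy is to turn an arbitrary admissible compression scheme into a finite quantizer and invoke the converse part of rate--distortion theory.

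First I would prove a purely geometric lemma: if $g\in\mathcal{H}^p_{L,\alpha}$, $g\colon[0,1]^k\to[0,1]^n$, then for every small $\eta>0$, writing $\delta:=(\eta/L)^{1/\alpha}$,
\[
\log\#(g([0,1]^k),\|\cdot\|_p,\eta)\le k\left(\log 2+\tfrac1\alpha\log\tfrac{L}{\eta}\right),
\]
and the same with $\|\cdot\|_\infty$ in place of $\|\cdot\|_p$ when $g\in\mathcal{H}^\infty_{L,\alpha}$. This follows by covering $[0,1]^k$ by $\lceil 1/\delta\rceil^{k}$ subcubes of $\|\cdot\|_\infty$-diameter at most $\delta$; since $\|y-y'\|_p\le\|y-y'\|_\infty$ on $[0,1]^k$, the $g$-image of each such subcube has $\|\cdot\|_p$-diameter at most $L\delta^\alpha=\eta$. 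The point of passing through $\|\cdot\|_\infty$-subcubes of the \emph{domain} is that the estimate carries no multiplicative prefactor depending on $k$ or $n$, which is essential because $k$ will grow linearly with $n$.

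Next I would fix $\mu\in\mP_\sigma(\mS)$, $\eps>0$, $n\in\N$ and an admissible pair $f\in\mathcal{B}$, $g\in\mathcal{H}^p_{L,\alpha}$ with $\int_{[0,1]^n}\|x-g\circ f(x)\|_p^p\,d(\pi_n)_*\mu\le\eps^p$, and replace it by a finite code: pick an $\eps$-net $E$ in $g([0,1]^k)$ with $\#E\le N:=\#(g([0,1]^k),\|\cdot\|_p,\eps)$ and let $\hat x\colon[0,1]^n\to E$ map $y$ to a Borel-measurably chosen nearest point of $E$ to $g\circ f(y)$. Then $\|x-\hat x(x)\|_p\le\|x-g\circ f(x)\|_p+\eps$, so by Minkowski's inequality in $L^p((\pi_n)_*\mu)$ the code $x\mapsto\hat x(x)$ has block length $n$, at most $N$ reconstruction values, and distortion at most $2\eps$. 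The standard converse bound $\log N\ge H(\hat x(X))\ge I(X;\hat x(X))\ge R^{(n)}_\mu(2\eps)\ge n\,R_\mu(2\eps)$ (for $X\sim(\pi_n)_*\mu$, using subadditivity of $n\mapsto R^{(n)}_\mu$ for stationary sources) combined with the geometric lemma gives $k/n\ge R_\mu(2\eps)\big/(\log 2+\tfrac1\alpha\log\tfrac{L}{\eps})$. As this holds for every admissible triple and is independent of $n$, we get $\r^{L^p}_{\mathcal{B}-\mathcal{H}^p_{L,\alpha}}(\mu,\eps)\ge R_\mu(2\eps)\big/(\log 2+\tfrac1\alpha\log\tfrac{L}{\eps})$. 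Taking $\sup_\mu$ and then $\sup_{\eps>0}$, and using $\log 2+\tfrac1\alpha\log\tfrac{L}{\eps}\sim\tfrac1\alpha\log\tfrac1\eps$ as $\eps\to0$ together with the variational principle, the right-hand side has $\limsup$ equal to $\alpha\cdot\limsup_{\eps\to0}(\log\tfrac1\eps)^{-1}\sup_\mu R_\mu(2\eps)=\alpha\,\ummdim(\mS)$, as required.

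For $p=\infty$ the only change is at the outset: since $\|x-g\circ f(x)\|_\infty\le\diam([0,1]^n,\|\cdot\|_\infty)=1$ always and vanishes off $\{g\circ f(x|_{0}^{n-1})\ne x|_{0}^{n-1}\}$, the $n$-block error probability bound $\eps$ yields $\int\|x-g\circ f(x)\|_\infty\,d(\pi_n)_*\mu\le\eps$, after which the argument runs verbatim with $\|\cdot\|_\infty$ everywhere. Granting the variational principle (whose proof is the substance of Section~\ref{sec:rate_dist}), I expect the main technical care to lie in the quantization step: keeping every covering estimate free of $k,n$-dependent prefactors, and tracking the additive net radius and the multiplicative constants in the distortion so that they disappear in the $\eps\to0$ limit; in the $p=\infty$ case one must additionally check that the converse inequality and the subadditivity used to define $R_\mu$ hold for the per-block max-distortion underlying the variational principle. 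The interchange of limits ($n\to\infty$, then $\sup_\mu$, then $\eps\to0$) is precisely what the variational principle is built to absorb.
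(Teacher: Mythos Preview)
Your approach is essentially the same as the paper's: both deduce the theorem from the variational principle (Theorem~\ref{thm:var_prin_R_tilde}) together with a quantization argument that converts an admissible $(f,g)$ pair into a finite-valued reproduction $Y$ and invokes the rate--distortion converse $nR_{\mu,p}(\cdot)\le I(X;Y)\le H(Y)\le\log(\#\text{values})$. The paper's Theorem~\ref{thm:main} carries this out by quantizing $[0,1]^k$ (the encoder output) at scale $\eps$ \emph{before} applying $g$, obtaining $\lceil 1/\eps\rceil^k$ values and distortion $\approx L\eps^\alpha$ via the H\"older property; you quantize $g([0,1]^k)$ directly at scale $\eps$ and then bound its covering number by your geometric lemma, obtaining $\approx \lceil (L/\eps)^{1/\alpha}\rceil^k$ values and distortion $2\eps$. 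This is the same computation with the roles of ``scale of quantization'' and ``resulting distortion'' swapped, and yields an asymptotically equivalent inequality.

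For $p=\infty$ your proposal has a small loose end you already flagged: Theorem~\ref{thm:var_prin_R_tilde} is stated only for $p\in[1,\infty)$, so running the argument ``verbatim with $\|\cdot\|_\infty$ everywhere'' leaves you without the final step, and there is no per-block $\|\cdot\|_\infty$ rate--distortion function available in the paper. The paper resolves this simply by using the pointwise inequality $\|\cdot\|_1\le\|\cdot\|_\infty$: the quantized code has $\|\cdot\|_1$-distortion at most its $\|\cdot\|_\infty$-distortion, so one bounds $R_{\mu,1}$ and applies the variational principle with $p=1$. With that one-line fix your argument goes through.
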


Intuitively, this result states that for a given set of trajectories $\mS$, we can always find a stationary stochastic process supported in $\mS$, which cannot be compressed at a better rate than $\alpha \ummdim(\mS)$. Note that we work with Borel compressors, which allows very general compression schemes (e.g. non-continuous quantizations) for which this bound can be applied. We consider H\"{o}lder decompressors with both H\"{o}lder exponent $\alpha$ and Lipschitz constant $L$ fixed, i.e. decompressors have to fulfill $(L,\alpha)$-H\"{o}lder condition for \emph{every} blocklength (hence uniform control of the decompression error is guaranteed for arbitrary long messages). We consider compressors and decompressors as functions between finite-dimensional unit cubes, hence the Lipschitz constant of the decompressor cannot be arbitrary reduced as this necessitates expanding the image of the compressor to beyond the unit cube - this makes the above constraint non-trivial. As shown by examples in the next section, fixing $L$ (i.e. choosing $\mD = \mH^p_{L,\alpha}$ instead of $\mD = \mH_{\alpha}$) is necessary for the above bound to hold (see Example \ref{ex:L_must_stay}). It is not sharp in general (see Example \ref{ex:two_symbols_shift}), yet equality holds for some subshifts after letting $L \to \infty$ (we prove it for $(N,K)$-sparse subshifts in Example \ref{ex:sparse_b-h_equality}). Such equality, whenever holds, can be seen as an operational characterization of metric mean dimension. For the proof of Theorem \ref{thm:main_holder_low} see Section \ref{sec:lower}.

We obtain also a lower bound on $\r_{\mathcal{{B}}-\mathcal{{H}}^2_{L,\alpha}}^{L^2}(\mu, \eps)$ for fixed $\mu \in \mP_{\sigma}(\mS)$ and $\eps>0$ (see Theorem \ref{thm:main}) and on $\sup \limits_{\eps > 0}\  \r_{\mathcal{{B}}-\mathcal{{H}}_{L,\alpha}}^{L^2}(\mu, \eps)$  for fixed $\mu \in \mP_{\sigma}(\mS)$ in terms of the rate-distortion dimension of a measure (see Corollary \ref{cor:rate_lower_fixed_measure} for details).

\subsection{Upper bounds}

The lower bound of Theorem \ref{thm:main_holder_low} is accompanied by two upper bounds. The first one, concerning linear compressors and $(L,\alpha)$-H\"{o}lder decompressors, gives an upper bound on the worst-case compression rate for zero-probability error in terms of mean box dimension. It is an application of the finite-dimensional linear embedding theorem with H\"{o}lder inverse.

\begin{thm}\label{thm:main_holder_up}
Let $\mS \subset [0,1]^\Z$ be a subshift. Then, for
	every $0<\alpha < 1$ and $p \in [1, \infty]$
	\[ \inf \limits_{L>0}\ \r_{\mathrm{LIN}-\mathcal{{H}}_{L,\alpha}^p}(\mS,0) \leq  \frac{2}{1-\alpha}\mbdim(\mS). \]
\end{thm}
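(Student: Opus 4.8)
The plan is to fix a large blocklength $n$, apply a finite-dimensional linear embedding theorem with H\"older inverse to the projected set $\pi_n(\mS) \subset [0,1]^n$, and then let $n \to \infty$. Recall that $\mbdim(\mS) = \lim_{n\to\infty} \udim(\pi_n(\mS))/n$ and the limit equals the infimum, so for any $\delta > 0$ we have $\udim(\pi_n(\mS)) \leq (\mbdim(\mS) + \delta) n$ for all large $n$. The key input is a Ben-Artzi--Eden--Foias--Nicolaenko-type embedding theorem (cf.\ \cite{benartzi1993holder}): if $A \subset \R^n$ is compact with $\udim(A) < d$, then for $k > 2d/(1-\alpha)$ (roughly), a prevalent (in particular, at least one) linear map $F : \R^n \to \R^k$ is injective on $A$ with $(C,\alpha)$-H\"older inverse on $F(A)$, for some constant $C$ depending on $F$, $A$, $\alpha$. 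Applying this with $d = (\mbdim(\mS)+\delta)n$ gives, for each large $n$, an integer $k = k(n)$ with $k/n$ close to $\tfrac{2}{1-\alpha}(\mbdim(\mS)+\delta)$ and a linear $F_n : \R^n \to \R^k$ injective on $\pi_n(\mS)$ with H\"older inverse.

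First I would turn this into an admissible compressor-decompressor pair in the sense of Definition \ref{def:uniform ratio is achievable}. The map $F_n$ must be rescaled and translated so that it maps $[0,1]^n$ (or at least $\pi_n(\mS)$) into $[0,1]^k$; since $[0,1]^n$ is bounded and $F_n$ is linear, $F_n([0,1]^n)$ is a bounded convex set, and composing with an affine bijection of a large cube onto $[0,1]^k$ keeps the map linear (as a restriction of an affine-then-linear map — one must check this still qualifies as an element of $\mathrm{LIN}$; strictly $\mathrm{LIN}$ requires $f = F|_{[0,1]^n}$ for $F$ linear, so I would instead absorb the scaling into $F$ directly, noting that we only need $F(\pi_n(\mS)) \subset [0,1]^k$, and then invoke Remark \ref{rem:extension_theorems} to pass between compressors defined on $\pi_n(\mS)$ versus on all of $[0,1]^n$ — but note $\mathrm{LIN}$ is the one class Remark \ref{rem:extension_theorems} excludes, so the scaling really has to be done at the level of the linear map). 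The decompressor $g_n$ is the H\"older inverse of $F_n|_{\pi_n(\mS)}$, defined on $F_n(\pi_n(\mS))$; by the extension theorem for the H\"older class (Remark \ref{rem:extension_theorems}, using the $\|\cdot\|_\infty$ or $\|\cdot\|_2$ version, then adjusting $p$ by the norm-comparison constants, which only affects $L$ by a factor $k^{1/p}$ and hence does not affect the \emph{rate}) it extends to $g_n : [0,1]^k \to [0,1]^n$ in $\mathcal{H}^p_{L_n, \alpha}$ for some $L_n$. Since $g_n \circ F_n = \id$ on $\pi_n(\mS)$, every $x \in \mS$ satisfies $g_n(F_n(x|_0^{n-1})) = x|_0^{n-1}$, so the $n$-block error is $0$ for \emph{every} $\mu \in \mP_\sigma(\mS)$. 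Hence $\r_{\mathrm{LIN} - \mathcal{H}^p_{L_n,\alpha}}(\mS, 0, n) \leq k(n)/n$.

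The main obstacle is the constant $L_n$: the H\"older inverse supplied by the embedding theorem has a constant depending on $n$ (on the geometry of $\pi_n(\mS)$ and on $F_n$), and a priori $L_n \to \infty$. This is exactly why the statement reads $\inf_{L>0} \r_{\mathrm{LIN}-\mathcal{H}^p_{L,\alpha}}(\mS,0)$ rather than a bound for fixed $L$: we only claim that \emph{some} finite $L$ works. I would handle it as follows. For each $n$ we get a bound with some constant $L_n$; taking $L = \sup_n L_n$ would fail if the sup is infinite, so instead I fix a target rate $R > \tfrac{2}{1-\alpha}\mbdim(\mS)$, choose $\delta$ and then $N_0$ so that for $n \geq N_0$ the construction gives $k(n)/n \leq R$, and observe that $\{L_n : n \geq N_0\}$ — wait, this set can still be unbounded. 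The correct fix: the definition of $\r_{\mathrm{LIN}-\mathcal{H}^p_{L,\alpha}}(\mS,0) = \limsup_n \r_{\mathrm{LIN}-\mathcal{H}^p_{L,\alpha}}(\mS,0,n)$ requires, for a \emph{single fixed} $L$, that infinitely many (indeed all large) $n$ admit codes of rate $\leq R$. So I would argue: pick a strictly increasing sequence $n_j \to \infty$ along which $k(n_j)/n_j \to \tfrac{2}{1-\alpha}\mbdim(\mS)$ from above; this still leaves $L_{n_j}$ possibly unbounded. The genuine resolution is that one shows $\inf_{L>0}\r_{\ldots}(\mS,0) \le R$ by, for each $L$, only needing the limsup over $n$; and since larger $L$ only enlarges the class $\mathcal{H}^p_{L,\alpha}$, $\r_{\mathrm{LIN}-\mathcal{H}^p_{L,\alpha}}(\mS,0)$ is non-increasing in $L$, so $\inf_{L>0}\r_{\mathrm{LIN}-\mathcal{H}^p_{L,\alpha}}(\mS,0) = \lim_{L\to\infty}\r_{\mathrm{LIN}-\mathcal{H}^p_{L,\alpha}}(\mS,0)$. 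Thus it suffices to show: for every $R > \tfrac{2}{1-\alpha}\mbdim(\mS)$ there is $L$ with $\r_{\mathrm{LIN}-\mathcal{H}^p_{L,\alpha}}(\mS,0) \leq R$. For this I would use a \emph{uniform} version of the embedding theorem — e.g.\ that for compact $A \subset [0,1]^n$ with $\udim(A) \le d$ one can take the H\"older constant of the inverse to be bounded by a function of $n$, $d$, $\alpha$, and the \emph{diameter} of $A$ only (diameter $\leq 1$ here in $\|\cdot\|_\infty$), not on finer geometry; combined with a careful bookkeeping of how this grows with $n$ relative to the chosen $k(n)$. If the available embedding theorem gives $L_n$ growing (say polynomially) in $n$, one slightly increases the target rate to absorb this by choosing $k(n)$ a touch larger, using the freedom in the strict inequality $\alpha < 1$ and $R > \tfrac{2}{1-\alpha}\mbdim(\mS)$. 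I would make precise which embedding statement from \cite{benartzi1993holder} (or its refinements) is being invoked, as the exact dependence of the H\"older constant on $n$ is the crux; modulo that, letting $R \downarrow \tfrac{2}{1-\alpha}\mbdim(\mS)$ finishes the proof.
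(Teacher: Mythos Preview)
You correctly identify the embedding theorem of Ben-Artzi--Eden--Foias--Nicolaenko (in the paper's form, Theorem \ref{thm:box_embedding}) as the main tool, and you correctly isolate the obstacle: applying it at each blocklength $n$ yields a H\"older constant $L_n$ that may grow without bound, while the definition of $\r_{\mathrm{LIN}-\mathcal{H}^p_{L,\alpha}}(\mS,0)$ requires a \emph{single} $L$ valid along a sequence $n \to \infty$. Your proposed fixes, however, do not close this gap. There is no version of the embedding theorem in which the H\"older constant depends only on the diameter of the set (the constant genuinely depends on the fine geometry of $\pi_n(\mS)$), and increasing $k(n)$ slightly buys you a better H\"older \emph{exponent} for the inverse but gives no control on the constant at the fixed exponent $\alpha$ you are required to keep.

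The paper resolves the obstacle by a different mechanism: an ``almost subadditivity'' lemma (Lemma \ref{l:R0_subadd}) which shows that for $\mathcal{C} = \mathrm{LIN}$,
\[
\inf_{L>0}\ \r_{\mathrm{LIN}-\mathcal{H}^p_{L,\alpha}}(\mS,0) \;=\; \inf_{L>0}\ \inf_{n\in\N}\ \r_{\mathrm{LIN}-\mathcal{H}^p_{L,\alpha}}(\mS,0,n).
\]
Thus it suffices to exhibit, for each $\eta>0$, a \emph{single} blocklength $n_0$ and a \emph{single} $L$ with $\r_{\mathrm{LIN}-\mathcal{H}^p_{L,\alpha}}(\mS,0,n_0) \le \frac{2}{1-\alpha}\mbdim(\mS)+\eta$; the embedding theorem delivers exactly this. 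The lemma is proved by taking tensor products: given $(f_0,g_0)$ at blocklength $n_0$ with $g_0 \in \mathcal{H}^p_{L,\alpha}$, the $\ell$-fold product $f_0^{\otimes \ell}\times\id_m$ (respectively $g_0^{\otimes \ell}\times\id_m$) at blocklength $n = \ell n_0 + m$ is again in $\mathrm{LIN}$ (respectively in $\mathcal{H}^p_{L+\eps,\alpha}$ for any $\eps>0$ once $\ell$ is large). The key point is that the \emph{normalized} $\|\cdot\|_p$ norm is used: a Jensen-inequality computation shows the H\"older constant of the product decompressor is at most $L\big(1 + n_0/(\ell k_0)\big)^{\alpha/p} \to L$ as $\ell \to \infty$, so no blow-up occurs. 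This is the missing idea in your argument.

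Your worry about scaling the linear map into $[0,1]^k$ is minor by comparison: since Lebesgue-a.e.\ linear $F$ works in Theorem \ref{thm:box_embedding}, one may choose $F$ of small operator norm, or simply rescale, obtaining $f = F|_{[0,1]^n} \in \mathrm{LIN}$ at the cost of enlarging $L$ --- harmless once only a single $n$ is needed.
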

By the definition of $\r_{\mathrm{LIN}-\mathcal{{H}}_{L,\alpha}^p}(\mS,0)$, the sequence of coders and decoders which achieves the above upper bound is independent of $\mu \in \mP_{\sigma}(\mS)$, hence compression for a given distribution $\mu \in \mP_{\sigma}(\mS)$ can be performed without prior knowledge of $\mu$, as long as $\mu$ is supported in $\mS$.
The infimum over $L$ means that we can obtain compression rates arbitrary close to $\frac{2}{1-\alpha}\mbdim(\mS)$ from above, at the cost of increasing $L$ (which, once fixed, is valid for all blocklengths). Since $\r_{\mathrm{LIN}-\mathcal{{H}}^p_{L,\alpha}}(\mS,0) \leq 1$ for $L \geq 1$ (as identity belongs to both classes $\mathrm{LIN}$ and $\mathcal{H}^p_{L,\alpha}$), we always have $\min \{1, \frac{2}{1-\alpha}\mbdim(\mS)\}$ as the upper bound. For that reason, Theorem \ref{thm:main_holder_up} gives a meaningful bound as long as $\mbdim(\mS) < \frac{1-\alpha}{2}$. Combining Theorems \ref{thm:main_holder_low} and \ref{thm:main_holder_up} we obtain the following corollary:

\begin{cor}\label{cor:main_bounds_joined}
Let $\mS \subset [0,1]^\Z$ be a subshift. For every $0<\alpha < 1$ and $p \in [1, \infty)$ the following holds:
	\[ \alpha \ummdim(\mS) \leq \inf \limits_{L>0}\ \sup \limits_{\eps>0}\ \sup \limits_{\mu\in  \mP_{\sigma}(\mS)} \r_{\mathcal{{B}}-\mathcal{{H}}_{L,\alpha}^p}^{L^p}(\mu,\eps)  \leq \inf \limits_{L>0}\ \sup \limits_{\eps>0}\ \sup \limits_{\mu\in  \mP_{\sigma}(\mS)} \r_{\mathcal{{B}}-\mathcal{{H}}_{L,\alpha}^p}(\mu,\eps) \leq \]
	\[ \leq \inf \limits_{L>0}\ \sup \limits_{\eps>0}\ \sup \limits_{\mu\in  \mP_{\sigma}(\mS)} \r_{\mathrm{LIN}-\mathcal{{H}}_{L,\alpha}^p}(\mu,\eps) \leq \min \{1, \frac{2}{1-\alpha}\mbdim(\mS)\}. \]
	Similarly for $p=\infty$
	\[ \alpha \ummdim(\mS)  \leq \inf \limits_{L>0}\ \sup \limits_{\eps>0}\ \sup \limits_{\mu\in  \mP_{\sigma}(\mS)} \r_{\mathcal{{B}}-\mathcal{{H}}_{L,\alpha}^\infty}(\mu,\eps) \leq \]
\[ \leq \inf \limits_{L>0}\ \sup \limits_{\eps>0}\ \sup \limits_{\mu\in  \mP_{\sigma}(\mS)} \r_{\mathrm{LIN}-\mathcal{{H}}_{L,\alpha}^\infty}(\mu,\eps) \leq \min \{1, \frac{2}{1-\alpha}\mbdim(\mS)\}. \]
\end{cor}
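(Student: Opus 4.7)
The proof will consist in chaining together two main ingredients, Theorems \ref{thm:main_holder_low} and \ref{thm:main_holder_up}, with two elementary monotonicity facts: the inequality (\ref{e:rate_comparision}) of Remark \ref{r:rates_infty} and the inclusion $\LIN \subset \mB$. The four inequalities in the displayed chain correspond precisely to these four ingredients, and no step is genuinely difficult; the only bookkeeping issue is tracking the quantifier $\inf_L \sup_\eps \sup_\mu$, which reduces to the trivial fact that pointwise inequalities pass to $\inf$ and $\sup$.

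First, Theorem \ref{thm:main_holder_low} yields $\alpha \ummdim(\mS) \leq \sup_{\eps>0} \sup_{\mu \in \mP_\sigma(\mS)} \r^{L^p}_{\mB-\mH^p_{L,\alpha}}(\mu,\eps)$ for every $L>0$; since the left-hand side does not depend on $L$, taking the infimum over $L>0$ on the right gives the leftmost inequality. The second inequality follows from the bound $\r^{L^p}_{\mB-\mH^p_{L,\alpha}}(\mu,\eps) \leq \r_{\mB-\mH^p_{L,\alpha}}(\mu,\eps^p)$ of Remark \ref{r:rates_infty}: taking $\sup$ over $\eps>0$ on both sides and substituting $\eps' = \eps^p$ preserves the supremum over $(0,\infty)$, after which one takes $\sup_\mu$ and $\inf_L$. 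The third inequality is immediate from $\LIN \subset \mB$; at each blocklength $n$, any admissible pair $(f,g)$ for $\r_{\LIN-\mH^p_{L,\alpha}}$ is also admissible for $\r_{\mB-\mH^p_{L,\alpha}}$, so the latter is at most the former and this passes to $\inf_L \sup_\eps \sup_\mu$.

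For the rightmost inequality I would first pass from the single-measure rate to the uniform rate: by Definition \ref{def:uniform ratio is achievable}, any pair $(f,g)$ witnessing $\r_{\LIN-\mH^p_{L,\alpha}}(\mS,\eps,n)$ works simultaneously for every $\mu \in \mP_\sigma(\mS)$, so $\sup_\mu \r_{\LIN-\mH^p_{L,\alpha}}(\mu,\eps) \leq \r_{\LIN-\mH^p_{L,\alpha}}(\mS,\eps)$. Monotonicity in $\eps$ (Remark \ref{r:rates_infty}) then gives $\sup_{\eps>0} \r_{\LIN-\mH^p_{L,\alpha}}(\mS,\eps) \leq \r_{\LIN-\mH^p_{L,\alpha}}(\mS,0)$, and Theorem \ref{thm:main_holder_up} produces $\inf_{L>0} \r_{\LIN-\mH^p_{L,\alpha}}(\mS,0) \leq \frac{2}{1-\alpha}\mbdim(\mS)$. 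The bound by $1$ comes from the observation that $\id \in \LIN \cap \mH^p_{L,\alpha}$ whenever $L \geq 1$, so $\r_{\LIN-\mH^p_{L,\alpha}}(\mu,\eps) \leq 1$ for any such $L$ and any $\mu, \eps$; consequently $\inf_{L>0}$ is at most $1$. Combining both upper bounds yields the minimum with $1$. The case $p=\infty$ is handled by exactly the same argument, with the passage through (\ref{e:rate_comparision}) simply omitted, since the second part of Theorem \ref{thm:main_holder_low} already gives the lower bound in terms of the ordinary rate $\r_{\mB-\mH^\infty_{L,\alpha}}$.
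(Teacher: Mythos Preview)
Your proof is correct and follows essentially the same approach as the paper, which treats the corollary as an immediate consequence of Theorems \ref{thm:main_holder_low} and \ref{thm:main_holder_up} together with the elementary observations you spell out (the inclusion $\LIN \subset \mB$, inequality (\ref{e:rate_comparision}), monotonicity in $\eps$, and the identity map giving the trivial bound by $1$). Your write-up simply makes explicit the bookkeeping that the paper leaves to the reader.
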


For the proof of Theorem \ref{thm:main_holder_up} see Section \ref{sec:upper}.

The second upper bound concerns the setting of Theorem \ref{thm:main_holder_low}: Borel compression and $(L,\alpha)$-H\"{o}lder decompression. It is a universal upper bound, independent of $\mS$. Moreover, the compressor and decompressor functions are independent of the choice of measure $\mu \in \mP_{\sigma}(\mS)$ and operate with zero-probability error.

\begin{prop}\label{prop:peano_up}
Let $\mS \subset [0,1]^\Z$ be a subshift. Then for every $0 < \alpha \leq 1$ and $p \in [1, \infty]$
	\[ \inf \limits_{L>0}\ \r_{\mathcal{B} - \mathcal{H}_{L, \alpha}^{p}} (\mS, 0) \leq \alpha. \]
\end{prop}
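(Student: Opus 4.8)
The plan is to establish the bound by exhibiting, for every $\delta>0$, a single constant $L=L(\alpha,p,\delta)$ and, for all large $n$, a pair $(f_n,g_n)$ with $f_n\in\mathcal{B}$, $g_n\in\mathcal{H}^p_{L,\alpha}$, rate $k_n/n\le\alpha+\delta$, and zero error for \emph{every} $\mu\in\mP_{\sigma}(\mS)$ simultaneously; since $\r_{\mathcal{B}-\mathcal{H}^p_{L,\alpha}}(\mS,0)$ is a $\limsup$ over $n$, this yields $\inf_{L>0}\r_{\mathcal{B}-\mathcal{H}^p_{L,\alpha}}(\mS,0)\le\alpha+\delta$, and then $\delta\to0$ gives the claim. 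The decompressor $g_n$ will be a H\"older surjection of unit cubes (a Peano-type space-filling map) and the compressor $f_n$ a Borel right inverse of it. Concretely, once a continuous surjection $g_n\colon[0,1]^{k_n}\to[0,1]^n$ in $\mathcal{H}^p_{L,\alpha}$ is available, its fibres $g_n^{-1}(y)$ are nonempty compact subsets of $[0,1]^{k_n}$, so by a standard measurable selection (Kuratowski--Ryll-Nardzewski, or explicitly $f_n(y):=\min_{\mathrm{lex}}g_n^{-1}(y)$, which is Borel since the sublevel sets of its coordinates are continuous images of compacta) there is a Borel $f_n\colon[0,1]^n\to[0,1]^{k_n}$ with $g_n\circ f_n=\id_{[0,1]^n}$. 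Then $g_n\circ f_n(x|_0^{n-1})=x|_0^{n-1}$ for every $x\in\mS$, so the error set in Definition~\ref{def:uniform ratio is achievable} is empty and $\r_{\mathcal{B}-\mathcal{H}^p_{L,\alpha}}(\mS,0,n)\le k_n/n$. (For $\alpha=1$ take $g_n=f_n=\id$ and $L=1$, so assume $\alpha<1$.)

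It remains to build the maps $g_n$. Fix $\delta>0$ and pick a rational $\beta=a/b\in[\alpha,\alpha+\delta)$ with $a<b$ in $\N$. By the classical theory of space-filling curves there is a continuous surjection $\psi\colon[0,1]^a\to[0,1]^b$ with $\|\psi(s)-\psi(t)\|_{\infty}\le C\|s-t\|_{\infty}^{a/b}$ for some $C=C(a,b)$: for $a=1$ this is the Hilbert/Peano curve onto $[0,1]^b$, and in general it is its multidimensional analogue, obtained by recursively subdividing $[0,1]^a$ into $N^{ab}$ subcubes of side $N^{-b}$ and $[0,1]^b$ into $N^{ab}$ subcubes of side $N^{-a}$ and snaking one family consistently onto the other. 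Since $\diam([0,1]^m,\|\cdot\|_p)=1$ and $\beta\ge\alpha$, we have $\|s-t\|^{\beta}\le\|s-t\|^{\alpha}$ on any unit cube, so $\psi\in\mathcal{H}^p_{L_0,\alpha}$ for a suitable $L_0=L_0(a,b,p)$. Now for $n=b\ell+r$ with $0\le r<b$ set $k_n=a\ell+r$ and let $g_n\colon[0,1]^{k_n}\to[0,1]^n$ apply $\psi$ to each of the $\ell$ consecutive blocks of $a$ input coordinates (mapping onto the corresponding blocks of $b$ output coordinates) and the identity to the remaining $r$ coordinates. This $g_n$ is a continuous surjection with $k_n/n=(a\ell+r)/(b\ell+r)\to a/b<\alpha+\delta$.

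Finally one checks $g_n\in\mathcal{H}^p_{L,\alpha}$ with $L$ independent of $n$. For $p=\infty$ this is immediate: discarding output blocks cannot increase $\|\cdot\|_{\infty}$-distance and the identity on $[0,1]^r$ is $(1,\alpha)$-H\"older on a diameter-one set, giving $L=\max\{L_0,1\}$. For $p<\infty$ one expands $\|g_n(x)-g_n(y)\|_p^p$ as a normalized sum over the $\ell$ blocks and the $r$ tail coordinates, applies the H\"older bound for $\psi$ blockwise, invokes concavity of $t\mapsto t^{\alpha}$ (Jensen) to pull the exponent outside the block average, and uses that $r<b$ is bounded while $\ell\to\infty$; this produces a constant $L=L(a,b,\alpha,p,L_0)$ valid for all $n$ large, which suffices as the rate is asymptotic (for the finitely many small $n$ one may fall back on $f=g=\id$, so $\r(\mS,0,n)\le1$). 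The conceptual heart of the argument is the existence of a H\"older surjection between cubes attaining the dimension-optimal exponent $a/b$ — a multidimensional Peano curve — and the main technical nuisance I anticipate is keeping the H\"older constant of $g_n$ uniform in the block length $n$ (essential because $\r$ is a $\limsup$), particularly for the normalized $\|\cdot\|_p$ norms with $p\notin\{2,\infty\}$; the measurable selection yielding the Borel compressor is routine.
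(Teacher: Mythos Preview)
Your proof is correct and follows essentially the same approach as the paper: a H\"older Peano-type surjection $g$ between cubes, a Borel right inverse $f$ obtained via measurable selection (Kuratowski--Ryll-Nardzewski), and a block-concatenation argument to handle all large $n$ with a uniform H\"older constant. The only organizational difference is that the paper packages the block-concatenation step into a separate ``almost subadditivity'' lemma (so that it suffices to exhibit a single good pair $(n_0,L)$), whereas you carry out that computation directly; the Jensen/normalization estimate you sketch is exactly the content of that lemma.
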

Note that this upper bound is general is a sense that it does not depend on $\mS$ (in particular, it is true for the full shift $\mS = [0,1]^\Z$). The proof is based on the existence of H\"{o}lder surjective maps between unit cubes (constructions akin to the Peano curve construction). See Section \ref{sec:peano}.

Proposition \ref{prop:peano_up} together with Theorem \ref{thm:main_holder_low} gives the following inequalities.

\begin{cor}\label{cor:rates_full_mmdim}
Let $\mS \subset [0,1]^\Z$ be a subshift. For every $0<\alpha \leq 1$ and $p \in [1, \infty)$ the following holds:
	\[ \alpha \ummdim(\mS) \leq \inf \limits_{L>0}\ \sup \limits_{\eps>0}\ \sup \limits_{\mu\in  \mP_{\sigma}(\mS)} \r_{\mathcal{{B}}-\mathcal{{H}}_{L,\alpha}^p}^{L^p}(\mu,\eps)  \leq \inf \limits_{L>0}\ \r_{\mathcal{B} - \mathcal{H}_{L, \alpha}^{p}} (\mS, 0) \leq \alpha. \]
\end{cor}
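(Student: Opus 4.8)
The plan is to prove, for every rational $\beta\in(\alpha,1)$, that there is a fixed constant $L=L(\beta,p)$ with $\r_{\mathcal{B}-\mathcal{H}^{p}_{L,\alpha}}(\mS,0)\le\beta$; letting $\beta$ decrease to $\alpha$ along the rationals then gives $\inf_{L>0}\r_{\mathcal{B}-\mathcal{H}^{p}_{L,\alpha}}(\mS,0)\le\alpha$. (The case $\alpha=1$ is trivial: take $k=n$ and $f=g=\id$, which lies in $\mathcal{B}$ and in $\mathcal{H}^{p}_{L,1}$ for $L\ge1$, giving rate $1=\alpha$.) So fix $\beta=a/b$ with $a<b$ natural numbers. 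I will exhibit, for each blocklength $n$, a Borel compressor $f_{n}:[0,1]^{n}\to[0,1]^{k_{n}}$ and a decompressor $g_{n}:[0,1]^{k_{n}}\to[0,1]^{n}$ in $\mathcal{H}^{p}_{L,\alpha}$ with $g_{n}\circ f_{n}=\id_{[0,1]^{n}}$ and $k_{n}/n\to\beta$. Since $g_{n}\circ f_{n}$ is the identity on the whole cube, the set $\{x:g_{n}\circ f_{n}(x|_{0}^{n-1})\neq x|_{0}^{n-1}\}$ is empty, so the $n$-block error is $0$ for \emph{every} $\mu\in\mP_{\sigma}(\mS)$; hence $\r_{\mathcal{B}-\mathcal{H}^{p}_{L,\alpha}}(\mS,0,n)\le k_{n}/n$ and $\r_{\mathcal{B}-\mathcal{H}^{p}_{L,\alpha}}(\mS,0)=\limsup_{n}\r_{\mathcal{B}-\mathcal{H}^{p}_{L,\alpha}}(\mS,0,n)\le\lim_{n}k_{n}/n=\beta$. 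Note the construction ignores $\mS$ (one compresses all of $[0,1]^{n}$), which is why the bound is universal.

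The building block is a H\"older space-filling map. It is classical --- generalizing the Peano/Hilbert curve construction --- that for natural numbers $a<b$ there is a continuous surjection $h:[0,1]^{a}\to[0,1]^{b}$ with $\|h(x)-h(x')\|_{\infty}\le C\|x-x'\|_{\infty}^{a/b}$ for some finite $C=C(a,b)$ (the exponent $a/b$ is optimal, since a $\gamma$-H\"older map cannot raise Hausdorff dimension by a factor exceeding $1/\gamma$). Passing from $\|\cdot\|_{\infty}$ to the normalized $\ell^{p}$ norms only multiplies the constant by a factor depending on $a,b,p$; together with $a/b=\beta\ge\alpha$ and the fact that the cubes have $\ell^{p}$-diameter $1$, this gives $h\in\mathcal{H}^{p}_{L_{0},\alpha}$ for some $L_{0}=L_{0}(\beta,p)$ (for $p=\infty$ one may take $L_{0}=C$). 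For the compressor I will use that $h$ admits a Borel right inverse: a continuous surjection between compact metric spaces always does, because open subsets of a compact metric space are $\sigma$-compact, so their images under $h$ are $F_{\sigma}$, whence $y\mapsto h^{-1}(\{y\})$ is Effros-measurable and the Kuratowski--Ryll-Nardzewski selection theorem yields a Borel map $s:[0,1]^{b}\to[0,1]^{a}$ with $h\circ s=\id_{[0,1]^{b}}$.

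To keep the H\"older constant independent of $n$ I will not rescale $h$, but take products of copies of it. For $m\in\N$ let $h^{\times m}:[0,1]^{am}\to[0,1]^{bm}$ send $(z_{1},\dots,z_{m})$ to $(h(z_{1}),\dots,h(z_{m}))$; then $s^{\times m}$ is a Borel right inverse of $h^{\times m}$. The key point is that $h^{\times m}\in\mathcal{H}^{p}_{L_{0},\alpha}$ with the \emph{same} $L_{0}$ for all $m$: writing the normalized $\ell^{p}$ norm coordinate-block by coordinate-block and using that $h$ is $(L_{0},\alpha)$-H\"older gives $\|h^{\times m}(z)-h^{\times m}(z')\|_{p}^{p}\le L_{0}^{p}\cdot\frac{1}{m}\sum_{i=1}^{m}\|z_{i}-z'_{i}\|_{p}^{p\alpha}$, and concavity of $t\mapsto t^{\alpha}$ (Jensen's inequality, using $0<\alpha\le1$) bounds this by $L_{0}^{p}\big(\frac{1}{m}\sum_{i}\|z_{i}-z'_{i}\|_{p}^{p}\big)^{\alpha}=L_{0}^{p}\|z-z'\|_{p}^{p\alpha}$; for $p=\infty$ the same holds by monotonicity of $t\mapsto t^{\alpha}$. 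Finally, for arbitrary $n$ put $m_{n}=\lceil n/b\rceil$, $k_{n}=am_{n}$, let $\iota_{n}:[0,1]^{n}\hookrightarrow[0,1]^{bm_{n}}$ be the zero-padding inclusion and $P_{n}:[0,1]^{bm_{n}}\to[0,1]^{n}$ the coordinate projection, and set $g_{n}=P_{n}\circ h^{\times m_{n}}$ and $f_{n}=s^{\times m_{n}}\circ\iota_{n}$. Then $f_{n}$ is Borel, $g_{n}\circ f_{n}=P_{n}\circ\iota_{n}=\id_{[0,1]^{n}}$, and since $bm_{n}\le n+b\le2n$ for $n\ge b$ the normalization inflates the Lipschitz constant of $P_{n}$ by at most $2^{1/p}$, so $g_{n}\in\mathcal{H}^{p}_{2^{1/p}L_{0},\alpha}$; as $k_{n}/n=a\lceil n/b\rceil/n\to a/b=\beta$ (and $\r_{\mathcal{B}-\mathcal{H}^{p}_{L,\alpha}}(\mS,0,n)\le1$ for the finitely many $n<b$ via $\id$), the first paragraph's claim follows with $L=\max\{1,2^{1/p}L_{0}\}$, and then the Proposition.

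The main obstacle is exactly what the block-product is designed to circumvent: making the decompressor's H\"older constant uniform over all blocklengths. A single rescaled space-filling surjection $[0,1]^{k_{n}}\to[0,1]^{n}$ is $(k_{n}/n)$-H\"older, but converting from $\|\cdot\|_{\infty}$ to $\|\cdot\|_{p}$ inflates its constant by $\sim n^{\alpha/p}$, which is fatal for $p<\infty$; and chaining curves through $[0,1]$ collapses the exponent to $1/n$, which is fatal for $\alpha$ close to $1$. Splitting into blocks of fixed size $b$ and invoking concavity of $t\mapsto t^{\alpha}$ for the $\ell^{p}$ estimate is what produces an $n$-independent constant while keeping the rate at $\beta$. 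The remaining ingredients --- existence of an optimal-exponent H\"older space-filling map and of a Borel right inverse --- are classical and only need to be assembled with care; in particular the naive "de-interleave the binary digits" map is not even well defined, so one must use a genuine Peano/Hilbert-type construction for $h$.
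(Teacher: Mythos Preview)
Your argument is correct and takes essentially the same route as the paper: the rightmost inequality is exactly Proposition~\ref{prop:peano_up}, proved via a H\"older space-filling map $h:[0,1]^a\to[0,1]^b$ together with block products and Jensen's inequality to keep the H\"older constant uniform in $n$---this is precisely the content of the paper's ``almost subadditivity'' Lemma~\ref{l:R0_subadd}, which you have inlined rather than invoked. Note, however, that you have only addressed the rightmost inequality; the leftmost one is Theorem~\ref{thm:main_holder_low}, and the middle one follows directly from the definitions (cf.\ Remark~\ref{r:rates_infty} and the fact that a pair $(f,g)$ achieving uniform zero error for $\mS$ achieves $L^p$-error $\le\eps$ for every $\mu\in\mP_\sigma(\mS)$ and every $\eps>0$), so for a complete proof of the corollary you should cite these.
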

In particular, if $\ummdim(\mS)=1$, then the above compression rates are all equal to $\alpha$. 
\section{Examples}\label{sec:examples}

In this section we present several examples illustrating results of the previous section. For the proofs and calculations see Appendix \ref{app:examples}.

\subsection{Discrete signals}

In general, equality does not hold in Theorem \ref{thm:main_holder_low}.

\begin{example}\label{ex:two_symbols_shift}
	Let $\mS := \{0,1\}^{\Z}$. Then $\mmdim(\mS)= \dim_B(\{0,1\}) = 0$ (see Example \ref{ex:general_full_shift}). For any $ \alpha \in (0,1]$ and $L>0$ it holds that
\begin{equation}\label{eq:two_symbols_shift_lower} \sup \limits_{\varepsilon > 0} \ \sup \limits_{\mu \in  \mP_{\sigma}(\mS)} \r^{L^1}_{\mathcal{B} - \mathcal{H}_{L, \alpha}^1}(\mu, \eps) \geq \frac{\alpha (1 - H(\frac{1}{4}))}{\log \big( \max \{ 8 L, 8 \} \big)} > 0,
\end{equation}
where $H(p) = -p\log(p) - (1-p)\log(1-p)$ is the Shannon entropy of the probability vector $(p,1-p)$. For the proof of (\ref{eq:two_symbols_shift_lower}) see Appendix \ref{sub:two_symbols_shift}.
\end{example}
This example shows that equality in Theorem  \ref{thm:main_holder_low} might not hold for any fixed $L>0$. Note however that the above lower bound tends to $0$ as $L \to \infty$.

The next example shows that one cannot change the class $\mathcal{H}^p_{L, \alpha}$ to $\mathcal{H}_{\alpha}$ in Theorem \ref{thm:main_holder_low}.
\begin{example}\label{ex:L_must_stay}
	Let $A = \{0\} \cup \{\frac{1}{n} : n \in \N \}$ and $\mS := A^{\Z}$. Then, according to Example \ref{ex:general_full_shift} and \cite[Example 2.7]{falconer2004fractal}
	\[\ummdim(\mS) = \mbdim(\mS) = \udim(A) = \frac{1}{2}.\]
	On the other hand, we will prove
	\[\sup \limits_{\eps > 0}\ \sup \limits_{\mu \in  \mP_{\sigma}(\mS)}\ \r_{\mathcal{B} - \mathcal{H}_{\alpha}}(\mu, \eps) = 0 \text{ for every }\alpha \in (0,1].\]
Actually, a stronger claim is true:
\[\sup \limits_{\eps > 0}\  \sup \limits_{\mu \in  \mP_{\sigma}(\mS)}\ \r_{\mathrm{LIN} - \mathcal{L}}(\mu, \eps) = 0.\]
For the proof of above statements see Appendix \ref{sub:L_must_stay}. 
\end{example}

\subsection{Sparse signals}
It turns out that the lower bound in Theorem \ref{thm:main_holder_low} is tight (after taking infimum over Lipschitz constants, i.e. letting $L \to \infty$) in the case of subshifts consisting of $(N,K)$-sparse vectors.

\begin{example}\label{ex:sparse_b-h_equality}
Let $\mS \subset[0,1]^\Z$ be the subshift from Example \ref{ex:sparse_subshift}, i.e.
\[ \mS = \{ x \in [0,1]^\Z : \underset{j \in \Z}{\forall}\ \|x|_{j}^{j+N-1} \|_0 \leq K\} \]
for $N,K \in \N,\ K \leq N$. Recall that $\mmdim(\mS) = \mbdim(\mS) = \frac{K}{N}$. For every $p \in [1,\infty]$ and $\alpha \in (0,1]$ it holds
\begin{equation}\label{eq:sparse_borel_rate} \inf \limits_{L>0}\ \sup \limits_{\mu \in \mP_{\sigma}(\mS)} \sup \limits_{\eps>0}\ \r_{\mB - \mH^p_{L,\alpha}}(\mu, \eps) = \inf \limits_{L>0}\ \r_{\mB - \mH^p_{L,\alpha}}(\mS, 0) = \alpha \mmdim(\mS) = \frac{\alpha K}{N}.
\end{equation}

For the proof see Appendix \ref{sec:sparse_b-h_equality}.
\end{example}

Such strong compression properties are, similarly as in Proposition \ref{prop:peano_up}, obtained by employing constructions akin to the Peano curve construction between unit cubes.

The following example gives a lower bound on compression rates for the linear compression with H\"{o}lder decompression in case of the $(N,K)$-sparse subshift. It does not match the upper bound from Theorem \ref{thm:main_holder_up}, but it shows that one cannot improve (in general) the constant $\frac{2}{1 - \alpha}$ to $\frac{t}{1 - \alpha}$ for any $t < 2$.

\begin{example}\label{ex:sparse_LIN-H}
Let $\mS \subset[0,1]^\Z$ be the subshift from Example \ref{ex:sparse_subshift}, i.e.
\[ \mS = \{ x \in [0,1]^\Z : \underset{j \in \Z}{\forall}\ \|x|_{j}^{j+N-1} \|_0 \leq K\} \]
for $K,N \in \N,\ K \leq N$. For any $\alpha \in (0,1]$ inequality
\begin{equation}\label{eq:LIN-H_sparse_lower} \sup \limits_{\mu \in \mP_{\sigma}(\mS)} \r_{\LIN - \mH_{\alpha}}(\mu, 0) \geq \min \Big\{\frac{2K}{N}, 1 \Big\} = \min \{ 2\mmdim(\mS), 1 \}
\end{equation}
holds. Consequently, for every $p \in [1,\infty]$
\[ \min \{ 2\mmdim(\mS), 1 \} \leq  \inf \limits_{L>0}\ \r_{\LIN - \mH^p_{L, \alpha}}(\mS, 0) \leq \min \{ \frac{2}{1-\alpha}\mmdim(\mS), 1 \}.  \]
For the proof see Appendix \ref{sub:sparse_LIN-H}. Let us emphasize that the proof of (\ref{eq:LIN-H_sparse_lower}) does not use any other properties of $\alpha$-H\"{o}lder maps than continuity. Therefore the lower bound (\ref{eq:LIN-H_sparse_lower}) holds also for merely continuous decompressors.
\end{example}

\section{Variational principle for metric mean dimension}\label{sec:rate_dist}
In this section we present the main tool of the proof of Theorem \ref{thm:main_holder_low}: a variational principle for metric mean dimension, which expresses the metric mean dimension of a subshift $\mS$ in terms of rate-distortion functions corresponding to the stationary stochastic processes supported in $\mS$. It was proved by Lindenstrauss and Tsukamoto in \cite{lindenstrauss_tsukamoto2017rate}. We present its formulation which is suitable for our applications. It differs slightly from the one in \cite{lindenstrauss_tsukamoto2017rate}. In Appendix \ref{app:var_prin} we deduce the formulation below from the original result of  \cite{lindenstrauss_tsukamoto2017rate}. 
\subsection{Rate-distortion function}
The information-theoretic notion used in the formulation of the variational principle is the \emph{rate-distortion function}. We provide a slight modification of the expression used in \cite{lindenstrauss_tsukamoto2017rate}, better suited for the setting of subshifts (and closer to the standard definition, see e.g. \cite{Gray11}). We state the definition for subshifts with alphabet space being a general compact metric space $(A,d)$, however in this work we will consider mainly the case $A = [0,1]$ with the standard metric.

\begin{defn}\label{def:rate_distortion_function}
Let $(A,d)$ be a compact metric space. Let
	$\mS \subset A^{\mathbb{Z}}$ be a subshift and $\mu\in  \mP_{\sigma}(\mS)$. For $p \in [1, \infty),\ \varepsilon>0$ and $n\in\mathbb{{N}}$
	we define the \textbf{$L^p$ rate-distortion function} $R_{\mu,p}(n,\varepsilon)$
	as the infimum of
	\begin{equation}
	\frac{I(X;Y)}{n},\label{eq: definition of rate-distortion function}
	\end{equation}
	where $X=(X_0, \ldots, X_{n-1})$ and $Y=(Y_{0},\dots,Y_{n-1})$ are random variables defined on some probability space $(\Omega,\mathbb{P})$ such that

	\begin{itemize}
		\item $X=(X_0, \ldots, X_{n-1})$ takes values in $A^n$, and
		its law is given by $(\pi_n)_*\mu$.
		\item $Y=(Y_{0},\dots,Y_{n-1})$ takes values in $A^n$ and approximates $(X_{0},X_{1},\dots,X_{n-1})$
		in the sense that
		\begin{equation}\label{eq: distortion condition}
		\begin{gathered}
		\mathbb{{E}}\left(\frac{1}{n}\sum_{k=0}^{n-1}d(X_{k},Y_{k})^p\right)\leq\varepsilon^p.
		\end{gathered}
		\end{equation}
	\end{itemize}
	Here $\mathbb{E}(\cdot)$ is the expectation with respect to the probability
	measure $\mathbb{P}$ and $I(X;Y)$ is the mutual information of random vectors $X$ and $Y$ (see \cite{Gray11} and \cite{lindenstrauss_tsukamoto2017rate}).
	As proved in \cite[Theorem 9.6.1]{Gallager68}, the sequence $n \mapsto n\tilde{R}_{\mu, p}(n, \eps)$ is subadditive. \cite{Gallager68} provides a proof for stationary stochastic process with finite alphabet, but it extends verbatim to our setting. Hence, we can define
	\[
	R_{\mu,p}(\eps)=\lim_{n \to \infty}R_{\mu,p}(n,\varepsilon) = \inf_{n \in \N}R_{\mu,p}(n,\varepsilon).
	\]
	
\end{defn}

\begin{rem}\label{rem:rate_distortion_properties}
	Similarly to \cite[Remark IV.3]{lindenstrauss_tsukamoto2017rate}, in Definition \ref{def:rate_distortion_function} it is enough to consider random vectors $Y$ taking only finitely many values. As $I(X;Y) \leq H(Y) < \infty$ for $Y$ taking finitely many values, we obtain that $R_{\mu,p}(\eps) < \infty$ for every $\eps>0$ (see also Remark \ref{rem:rd_function_finite}). Note that $R_{\mu, 1}(\eps) \leq R_{\mu,p}(\eps)$ for $p \in [1, \infty)$ and $\eps \geq 0$. For more details on the rate-distortion function see \cite{Gray11, Gallager68}.
\end{rem}
\subsection{Variational principle for metric mean dimension}
The following theorem is a variant of the variational principle for metric mean dimension for subshifts $\mS \subset [0,1]^\Z$. It is deduced from the original theorem in \cite{lindenstrauss_tsukamoto2017rate} - see Appendix \ref{app:var_prin}.
\begin{thm}\label{thm:var_prin_R_tilde} Let $\mS \subset [0,1]^\Z$ be a subshift. Then for $p \in  [1, \infty)$
	\begin{equation}
	\begin{split}
	\overline{\mdim}_{\mathrm{M}}(\mS) & =\limsup_{\varepsilon\to0}\frac{\sup_{\mu\in  \mP_{\sigma}(\mS)}R_{\mu, p}(\varepsilon)}{\log\frac{1}{\eps}}.
	\end{split}
	\end{equation}
\end{thm}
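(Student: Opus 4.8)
The plan is to deduce Theorem \ref{thm:var_prin_R_tilde} from the original Lindenstrauss--Tsukamoto variational principle, which (as stated in \cite{lindenstrauss_tsukamoto2017rate}) expresses $\ummdim$ of a dynamical system $(\mX,T,\rho)$ in terms of an $L^1$-type rate-distortion function defined via the metric $\rho$ itself, rather than via the normalized $\ell^p$ block distances $\|\cdot\|_p$ appearing in Definition \ref{def:rate_distortion_function}. So the work is essentially a translation between two normalizations, carried out in two independent directions (a lower and an upper bound for $\ummdim(\mS)$), and then a reduction showing the answer is the same for every $p\in[1,\infty)$.

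First I would recall (or cite from Appendix \ref{app:var_prin} / Appendix \ref{app:mmdim}, and Proposition \ref{prop:canonical}) that for a subshift $\mS\subset[0,1]^\Z$ equipped with the metric $\tau$ of \eqref{eq:tau}, the distortion measured by $\tau$ at scale $\eps$ is comparable, up to universal multiplicative constants and a logarithmic shift in $\eps$, to the block distortion $\mathbb{E}\big(\tfrac1n\sum_{k=0}^{n-1}|X_k-Y_k|\big)$ on a window of length $n$: the tail coordinates contribute a geometric series $\sum_{|i|>n}2^{-|i|}\asymp 2^{-n}$, while the central window gives the $\ell^1$ average up to the factor $\tfrac1n$ versus the weights $2^{-|i|}$. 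Quantitatively, if $Y$ approximates $X$ on the $n$-window with $\ell^1$-distortion $\le\eps$, one can build a coupling on all of $\mS$ with $\tau$-distortion $\lesssim\eps+2^{-n}$ and the same (or smaller) mutual information rate; conversely a $\tau$-good coupling restricts to an $\ell^1$-good coupling on a proportionally long window. Pushing these comparisons through the $\limsup_{\eps\to0}$ and the division by $\log\tfrac1\eps$ — where the additive shifts in $\eps$ disappear because $\frac{\log\frac1{c\eps}}{\log\frac1\eps}\to1$ — yields the identity for $p=1$, i.e. $\ummdim(\mS)=\limsup_{\eps\to0}\frac{\sup_\mu R_{\mu,1}(\eps)}{\log\frac1\eps}$.

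Next I would upgrade from $p=1$ to general $p\in[1,\infty)$. The easy inequality is $R_{\mu,1}(\eps)\le R_{\mu,p}(\eps)$ (noted in Remark \ref{rem:rate_distortion_properties}), since on $[0,1]$ we have $|x-y|\le|x-y|^p$... wait, that is the wrong direction; rather, any $Y$ admissible for the $L^p$ constraint at level $\eps$ satisfies, by Jensen (concavity of $t\mapsto t^{1/p}$), $\mathbb E\big(\tfrac1n\sum|X_k-Y_k|\big)\le\big(\mathbb E(\tfrac1n\sum|X_k-Y_k|^p)\big)^{1/p}\le\eps$, so it is admissible for the $L^1$ constraint, giving $R_{\mu,1}(\eps)\le R_{\mu,p}(\eps)$ and hence "$\le$" in the $p$-version once we insert it into the $p=1$ identity. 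For the reverse, given $Y$ admissible for the $L^1$ constraint at a finer level $\delta$, one uses that on $[0,1]$, $|x-y|^p\le|x-y|$, so the same $Y$ has $L^p$-distortion-to-the-$p$ at most $\delta$, i.e. $L^p$-distortion at most $\delta^{1/p}$; thus $R_{\mu,p}(\delta^{1/p})\le R_{\mu,1}(\delta)$. Setting $\eps=\delta^{1/p}$ and using $\log\frac1{\delta}=p\log\frac1\eps$, the extra factor $p$ is exactly absorbed — actually one must be slightly careful here: this gives $\frac{\sup_\mu R_{\mu,p}(\eps)}{\log\frac1\eps}\le p\cdot\frac{\sup_\mu R_{\mu,1}(\eps^p)}{\log\frac1{\eps^p}}$, and the factor $p$ is harmless only if $\ummdim$ via $R_{\mu,1}$ is $0$, so this crude bound is not enough — I would instead route the reverse inequality directly through a covering-number estimate, bounding $R_{\mu,p}(\eps)$ from above by $\frac1n\log\#(\pi_n(\mS),\|\cdot\|_\infty,c\eps)$ for a suitable coupling to a fine net in the $\|\cdot\|_\infty$ metric (since $\|\cdot\|_p\le\|\cdot\|_\infty$), and then comparing to Definition \ref{df:mmdim_subshift} directly.

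The main obstacle I anticipate is precisely this bookkeeping of constants and exponents across the three metrics ($\tau$, $\|\cdot\|_p$, $\|\cdot\|_\infty$) and the need to make sure that replacing $\eps$ by $c\eps$ or $\eps^p$ is absorbed into the $\limsup_{\eps\to0}$ — the $\log\frac1{c\eps}/\log\frac1\eps\to1$ trick handles multiplicative rescalings but not the power $\eps\mapsto\eps^p$, so the $p\ge2$ (or $p\neq1$) case genuinely requires the detour through covering numbers rather than a one-line domination of rate-distortion functions, and assembling the upper bound $\sup_\mu R_{\mu,p}(\eps)\lesssim \lim_n\frac1n\log\#(\pi_n(\mS),\|\cdot\|_\infty,c\eps)$ cleanly (choosing $Y$ to be the nearest point of an $\eps$-net, so that $I(X;Y)\le H(Y)\le\log(\text{net size})$ and the distortion is automatically controlled) is where the care is needed. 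Everything else is a direct transcription of the argument of \cite{lindenstrauss_tsukamoto2017rate} into the subshift normalization, which I would relegate to Appendix \ref{app:var_prin}.
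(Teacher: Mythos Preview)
Your overall strategy is right---deduce the subshift version from the Lindenstrauss--Tsukamoto variational principle by comparing the two rate-distortion functions---and your plan would work, but it is more roundabout than necessary because of a misreading of the source. The Lindenstrauss--Tsukamoto result (quoted in the paper as Theorem~\ref{thm:lin_tsu_var_prin}, i.e.\ \cite[Corollary III.6]{lindenstrauss_tsukamoto2017rate}) is already stated for every $p\in[1,\infty)$, not just $p=1$: it gives
\[
\ummdim(\mX,T,\rho)=\limsup_{\eps\to0}\frac{\sup_{\mu}\tilde R_{\mu,p}(\eps)}{\log\frac1\eps}
\]
whenever $(\mX,\rho)$ has tame growth of covering numbers. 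So there is no need for your two-stage ``prove $p=1$, then upgrade'' detour. The paper's argument is simply: (i) observe $([0,1]^\Z,\tau)$ has tame growth (Remark~\ref{rem:tame_growth}); (ii) for each fixed $p$ compare the two rate-distortion functions directly, proving the sandwich
\[
\tilde R_{\mu,p}(14\eps)\le R_{\mu,p}(\eps)\le \tilde R_{\mu,p}(\eps)
\]
(Proposition~\ref{prop:R_J_comparison}); (iii) divide by $\log\frac1\eps$, take $\limsup$, and use that multiplicative rescalings of $\eps$ are absorbed. No Jensen, no covering-number detour, no passage between different values of $p$.

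Your covering-number argument for the upper bound $\sup_\mu R_{\mu,p}(\eps)\lesssim\lim_n\frac1n\log\#(\pi_n(\mS),\|\cdot\|_\infty,c\eps)$ is correct but redundant: it amounts to re-proving the easy half of Lindenstrauss--Tsukamoto inside your argument. More importantly, the step you wave at in your Step~1---``one can build a coupling on all of $\mS$ with $\tau$-distortion $\lesssim\eps+2^{-n}$''---is exactly the nontrivial direction of the comparison, and the paper does not treat it as obvious. The issue is that the approximant $Y$ in Definition~\ref{def:rate_distortion_function} takes values in $[0,1]^n$, whereas for $\tilde R_{\mu,p}$ one needs $\mS$-valued random variables; the paper constructs the lift via a Borel selector $S:[0,1]^n\to\mS$ with $d_p(x,\pi_n(S(x)))\le d_p(x,\pi_n(y))$ for all $y\in\mS$, obtained through the Kuratowski--Ryll-Nardzewski theorem, and then sets $Y_k=\sigma^k\circ S\circ Z$. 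That measurable-selection step is the real content of the comparison and should not be glossed over.
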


\section{Related works}\label{sec:related_works}

\numberwithin{thm}{section}

In this section we discuss results by other authors which aim at establishing analog compression rates in various settings, together with corresponding compression-decompression algorithms. The main difference with our work is that these results give bounds on compression rates for a \emph{fixed} distribution $\mu \in \mP_{\sigma}([0,1]^\Z)$ rather than all distributions supported by a given subshift. The bounds are given in terms of notions which can be seen as probabilistic notions of mean dimension.

In their pioneering article \cite{WV10} Wu and Verd\'u calculated or bounded from
below or above $\r_{\mathcal{{C}}-\mathcal{{D}}}(\mu,\eps)$ for
certain $\mC,\mD$ and $\mu\in\mathcal{{P}}_{\sigma}(\mathbb{{R}^{\mathbb{{N}}}})$. The notion which appears in the bounds is the \emph{Minkowski-dimension compression rate}, which (similarly to metric mean dimension and mean box dimension) is based on the geometrical notion of the box-counting (Minkowski) dimension.

\begin{defn}\label{d:measurable_mean_minkowski}\cite[Definition 10]{WV10} For a subshift $\mS\subset[0,1]^{\Z}$, invariant measure $\mu\in \mP_{\sigma}(\mS)$ and $0 \leq \eps<1$ define the \textbf{Minkowski-dimension compression rate} as
	\[
	R_{B}(\mu,\eps)= \limsup \limits_{n \to \infty}\ \inf \Big\{ \frac{\overline{\dim}_{B}(A)}{n} : A\subset[0,1]^{n},\ A \text{ - compact, } \mu(\pi_{n}^{-1}(A))\geq 1-\eps\Big\}.
	\]
\end{defn}
One can bound the Minkowski-dimension compression rate from above by mean box dimension. More precisely, the following inequalities hold for every subshift $\mS \subset [0,1]^\Z, \mu \in \mP_{\sigma}(\mS)$ and $0 \leq \eps<1$
\begin{equation}\label{e:RB_mbdim}
R_{B}(\mu,\eps)\leq R_{B}(\mu,0) \leq\mbdim(\mS).
\end{equation}

Similarly to Theorems \ref{thm:main_holder_low} and \ref{thm:main_holder_up}, Wu and Verd\'u give lower bounds for Borel-H\"{o}lder compression scheme and upper bounds for linear-H\"{o}lder compression scheme. Note that they work with the class $\mH_{\alpha}$ instead of $\mH^p_{L,\alpha}$, hence their results do not guarantee a uniform bound on the Lipschitz constant among the sequence of decoders.

\begin{thm}\label{thm:wu_verdu}\cite[Eq. (75) and Theorem 18]{WV10} 
	For $\mu \in \mP_{\sigma}[0,1]^{\Z},\ \eps \in (0,1)$ and $\alpha \in (0,1)$ the following holds:
	\[ \alpha R_B(\mu, \eps) \leq \r_{\mathcal{B}-\mathcal{H}_{\alpha}}(\mu, \eps) \leq \r_{\mathrm{LIN}-\mathcal{H}_{\alpha}}(\mu, \eps) \leq \frac{1}{1 - \alpha}R_B(\mu, \eps). \]
Consequently,
	\[\r_{\mathrm{LIN}-\mathcal{H}}(\mu, \eps) \leq R_B(\mu, \eps).\]
\end{thm}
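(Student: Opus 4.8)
The plan is to verify the four assertions one at a time; the two middle inequalities are formal, and the outer (lower and upper) bounds are, respectively, a H\"older dimension-distortion estimate and the quantitative linear embedding theorem used in \cite{WV10} (see also \cite{benartzi1993holder}), which I would quote rather than reprove. Since every construction involved is performed one blocklength $n$ at a time and uses nothing about the ambient space beyond its finite dimensionality, the passage from $\R^{\N}$ to $[0,1]^{\Z}$ discussed in Section~\ref{sec:notation} will not affect anything. For the two middle inequalities I would argue as follows. A linear map restricted to a cube is Borel, so $\mathrm{LIN}\subseteq\mathcal{B}$; since $\r_{\mathcal{C}-\mathcal{D}}(\mu,\eps,n)$ is an infimum over admissible pairs $(f,g)$, enlarging the compressor class cannot increase it, whence $\r_{\mathcal{B}-\mathcal{H}_{\alpha}}(\mu,\eps)\le\r_{\mathrm{LIN}-\mathcal{H}_{\alpha}}(\mu,\eps)$. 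Applying the same monotonicity in the decompressor class to $\mathcal{H}_{\alpha}\subseteq\mathcal{H}$ gives $\r_{\mathrm{LIN}-\mathcal{H}}(\mu,\eps)\le\r_{\mathrm{LIN}-\mathcal{H}_{\alpha}}(\mu,\eps)$ for every $\alpha\in(0,1)$; granting the upper bound $\r_{\mathrm{LIN}-\mathcal{H}_{\alpha}}(\mu,\eps)\le\frac{1}{1-\alpha}R_B(\mu,\eps)$ established below and using that $\inf_{\alpha\in(0,1)}\frac{1}{1-\alpha}=1$ (approached as $\alpha\to0^{+}$) then yields the corollary $\r_{\mathrm{LIN}-\mathcal{H}}(\mu,\eps)\le R_B(\mu,\eps)$.

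For the lower bound $\alpha R_B(\mu,\eps)\le\r_{\mathcal{B}-\mathcal{H}_{\alpha}}(\mu,\eps)$ I would fix $n$ and an arbitrary admissible pair: $f:[0,1]^{n}\to[0,1]^{k}$ Borel, $g:[0,1]^{k}\to[0,1]^{n}$ with $\|g(u)-g(v)\|\le L\|u-v\|^{\alpha}$ for some $L$, and $\mu(\{x:\ g(f(x|_{0}^{n-1}))\neq x|_{0}^{n-1}\})\le\eps$. Set $A:=g([0,1]^{k})$, a compact subset of $[0,1]^{n}$ (continuous image of a cube). Off the error set, $x|_{0}^{n-1}=g(f(x|_{0}^{n-1}))\in A$, so $\mu(\pi_{n}^{-1}(A))\ge1-\eps$, i.e.\ $A$ is admissible in Definition~\ref{d:measurable_mean_minkowski}. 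Using the elementary fact that an $\alpha$-H\"older map raises upper box dimension by a factor of at most $1/\alpha$ (cover the domain by $N$ sets of diameter $\delta$, so the images have diameter $\le L\delta^{\alpha}$; compare covering numbers, cf.\ \cite{Rob11}), applied to $g$ on the full cube, I get $\udim(A)\le\udim([0,1]^{k})/\alpha=k/\alpha$ (Proposition~\ref{prop:dim_properties}). Hence the $n$-th inner infimum in the definition of $R_B(\mu,\eps)$ is $\le\udim(A)/n\le k/(n\alpha)$; taking the infimum over admissible $k$ shows it is $\le\r_{\mathcal{B}-\mathcal{H}_{\alpha}}(\mu,\eps,n)/\alpha$, and $\limsup_{n}$ gives $R_B(\mu,\eps)\le\r_{\mathcal{B}-\mathcal{H}_{\alpha}}(\mu,\eps)/\alpha$.

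For the upper bound $\r_{\mathrm{LIN}-\mathcal{H}_{\alpha}}(\mu,\eps)\le\frac{1}{1-\alpha}R_B(\mu,\eps)$ I would fix $\delta>0$ and, for each $n$, choose a compact $A_{n}\subseteq[0,1]^{n}$ with $\mu(\pi_{n}^{-1}(A_{n}))\ge1-\eps$ and $\udim(A_{n})<n(c_{n}+\delta)$, where $c_{n}$ is the $n$-th inner infimum in the definition of $R_B(\mu,\eps)$ so that $R_B(\mu,\eps)=\limsup_{n}c_{n}$. Take $k=k_{n}$ to be the least natural number with $k(1-\alpha)>\udim(A_{n})$, so $k_{n}/n<\frac{c_{n}+\delta}{1-\alpha}+\frac{1}{n}$. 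By the quantitative linear embedding theorem for compact sets of finite upper box dimension (\cite{benartzi1993holder}, the ingredient also behind Theorem~\ref{thm:main_holder_up}; see Section~\ref{sec:upper}), a suitable linear $F:\R^{n}\to\R^{k}$ — rescaled, e.g.\ via small nonnegative entries, so that $F([0,1]^{n})\subseteq[0,1]^{k}$ — is injective on $A_{n}$ with $F^{-1}:F(A_{n})\to A_{n}$ H\"older of some exponent exceeding $\alpha$; in \cite{WV10} it is exactly the latitude of a positive error probability $\eps$ (the freedom to discard a further set of arbitrarily small $\mu$-mass, already absorbed into $R_B(\mu,\eps)$) that improves the constant here from the $\frac{2}{1-\alpha}$ of Theorem~\ref{thm:main_holder_up} to $\frac{1}{1-\alpha}$. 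On the bounded set $F(A_{n})$ an exponent larger than $\alpha$ forces $\alpha$-H\"olderness, and by the H\"older extension theorems recalled in Remark~\ref{rem:extension_theorems} I would extend $F^{-1}$ to a map $g\in\mathcal{H}_{\alpha}$ defined on all of $[0,1]^{k}$ and valued in $[0,1]^{n}$. With $f:=F|_{[0,1]^{n}}\in\mathrm{LIN}$ one has $g\circ f=\id$ on $A_{n}$, hence $\mu(\{x:\ g(f(x|_{0}^{n-1}))\neq x|_{0}^{n-1}\})\le\mu(\pi_{n}^{-1}(A_{n})^{c})\le\eps$, so $\r_{\mathrm{LIN}-\mathcal{H}_{\alpha}}(\mu,\eps,n)\le k_{n}/n$. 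Passing to $\limsup_{n}$ and then $\delta\to0$ gives the bound.

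The main obstacle will be this last construction: producing a genuinely \emph{linear} compressor (already constrained by $F([0,1]^{n})\subseteq[0,1]^{k}$) while simultaneously securing an $\alpha$-H\"older inverse at the sharp rate $\frac{1}{1-\alpha}R_B(\mu,\eps)$ rather than $\frac{2}{1-\alpha}R_B(\mu,\eps)$. This rests on the random-projection/prevalence argument of \cite{WV10} together with a multiscale recovery scheme, which I would not reprove; the only point to check — and it is immediate — is that this argument uses no property of the ambient $\R^{n}$ beyond its finite dimension, so it transfers verbatim with $[0,1]^{n}$ in place of $\R^{n}$.
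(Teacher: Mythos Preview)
Your proposal is correct and follows essentially the same route as the paper. For the lower bound, the paper's Proposition~\ref{prop:rbh_below} takes $A=\pi_n(\mS\setminus B)$ rather than your $A=g([0,1]^k)$, but both choices yield $\udim(A)\le k/\alpha$ via the same H\"older distortion estimate and the same conclusion; for the upper bound, the paper likewise treats \cite[Theorem~18]{WV10} as a black box, remarking only that the constant $\frac{1}{1-\alpha}$ arises from optimizing \cite[Eq.~(172)]{WV10}. One caution: your heuristic that the sharpening from $\frac{2}{1-\alpha}$ to $\frac{1}{1-\alpha}$ stems from ``the latitude of a positive error probability'' is not the actual mechanism---the improvement in \cite{WV10} (and more transparently in \cite{SRAB17}) is intrinsic to the random-projection/multiscale argument and holds for Lebesgue-almost every linear map of the stated rank, not merely after discarding additional mass---but since you correctly treat this step as a citation rather than a derivation, your argument stands.
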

Our Corollary \ref{cor:main_bounds_joined} can be seen as a uniform analog of the above result. Note that in Corollary \ref{cor:main_bounds_joined} the constant in the upper bound is worse: $\frac{2}{1 - \alpha}$ instead of $\frac{1}{1 - \alpha}$. This is a consequence of considering the zero-probability error case and, as shown in Example \ref{ex:sparse_LIN-H}, cannot be improved. Note also that the upper bound of Theorem \ref{thm:wu_verdu} together with (\ref{e:RB_mbdim}) imply
\[\sup \limits_{\mu \in \mP_{\sigma}(\mS)}\ \sup \limits_{\eps>0}\
\r_{\mathrm{LIN}-\mathcal{H}_{\alpha}}(\mu, \eps) \leq \frac{1}{1 - \alpha}\mbdim(\mS), \]
yet this is not enough to obtain a similar bound with $\mathcal{H}^p_{L,\alpha}$ replacing $\mathcal{H}_{\alpha}$, as this would require assuring that the Lipschitz constant $L$ is uniformly bounded among the sequence of decompressors achieving the upper bound in Theorem \ref{thm:wu_verdu}. Wu and Verd\'u proved in \cite{WV10} the lower bound only for Lipschitz decompressors. For the convenience of the reader, we include the proof for H\"{o}lder decompressors in Appendix \ref{app:wv_lower_bound} (see Proposition \ref{prop:rbh_below}).	The upper bound on $\r_{\mathrm{LIN}-\mathcal{H}_{\alpha}}(\mu, \eps)$ comes from minimizing $R$ in \cite[(172)]{WV10} for fixed $\beta$. Stronger result than the existence of linear compressor and H\"{o}lder decompressor was proven in \cite{SRAB17}, where it is shown that almost every linear transformation of rank large enough serves as a good compressor in this setting. Namely, the authors proved that for every $\eta > 0$, Lebesgue almost every matrix $A \in \R^{n \times k}$ with $k \geq (\frac{1}{1 - \alpha}R_B(\mu, \eps) + \eta)n$ admits an $\alpha$-H\"{o}lder decompressor $g : \R^k \to \R^n$ satisfying
	\[ \mu(\{x\in \mathcal{X}|\ g\circ A(x|_{0}^{n-1})\neq x|_{0}^{n-1}\})\leq\eps. \]
	For details see \cite[Subsection VIII]{SRAB17}. Similar results for signal separation have been obtained in \cite{stotz2013almost} and \cite{SRAB17}. 

\begin{rem} We assume the compressor functions to take values in the unit cube $[0,1]^k$, hence we assume an universal bound on signals after compression. Results in \cite{WV10} are stated with the compressor taking values in $\R^k$, but note that (since we consider compact spaces) composing with suitable affine transformations will give compressor functions with values in $[0,1]^k$ with the same H\"{o}lder exponent $\alpha$ of the decompressor and possibly different (but not arbitrary) Lipschitz constant $L$.
\end{rem}

Let us discuss now results of \cite{CompressionCS17, jalali2017universal}. This requires introducing one more type of compression rate.

\begin{defn} \label{def:Prob_ratio is achievable}
Let $\mu\in  \mP_{\sigma}([0,1]^\Z)$. Let $\mathcal{{C}},\mathcal{{D}} \subset \{ f : [0,1]^n \to [0,1]^k : n, k \in \N \}$ be regularity classes with respect to the norm $\| \cdot \|_p$. For $n \in \N$ and $\eps \geq 0,\ \delta \geq 0$, the $\mathcal{C}-\mathcal{D}$ \textbf{probability analog compression rate $\r_{\mathcal{C}-\mathcal{D}}^{P,p}(\mu,\eps,n, \delta)\geq0$} of $\mu$ with $n$-block error probability $\delta$ at scale $\eps$ is the infimum of
	\[\frac{k}{n},\]
	where $k$ runs over all natural numbers such that there exist maps
	\[
	f:[0,1]^{n}\rightarrow[0,1]^{k},\ f\in\mathcal{{C}}\ \text{ and }\ g:[0,1]^{k}\rightarrow[0,1]^{n},\ g\in\mathcal{{D}}
	\]
	with
	\[
	\mu( \{ x \in [0,1]^\Z : \|x|_0^{n-1} - g \circ f (x|_0^{n-1})\|_p \geq \eps \}) \leq \delta.
	\]
	Define further:
	\[
	\r_{\mathcal{C}-\mathcal{D}}^{P, p}(\mu,\eps, n)=\lim_{\delta \to 0}\ \r_{\mathcal{C}-\mathcal{D}}^{P,p}(\mu,\eps,n, \delta),
	\]
	\[
	\r_{\mathcal{C}-\mathcal{D}}^{P,p}(\mu,\eps)=\limsup_{n\rightarrow\infty}\ \r_{\mathcal{C}-\mathcal{D}}^{P,p}(\mu,\eps,n).
	\]
\end{defn}

Note that if $\mathcal{C}, \mathcal{D}$ are regularity classes with respect to the norm $\| \cdot \|_p$ for $p \in [1,\infty)$ , then for any $\eps' > \eps \geq 0$
\begin{equation} \r_{\mathcal{C}-\mathcal{D}}^{L^p}(\mu,\eps') \leq \r_{\mathcal{C}-\mathcal{D}}^{P,p}(\mu,\eps).
\end{equation}
This follows from the observation that condition $\mu( \{ x \in [0,1]^\Z : \|x|_0^{n-1} - g \circ f (x|_0^{n-1})\| \geq \eps \}) \leq \delta$ implies $\int \limits_{[0,1]^n} \| x - g \circ f (x)\|^p d (\pi_n)_* \mu \leq \delta + \eps^p \leq (\eps')^p$ for $\delta$ small enough.

The results of \cite{CompressionCS17} are given in terms of the \emph{rate-distortion dimension} of a measure $\mu \in \mP_{\sigma}([0,1]^\Z)$, defined as
\begin{equation}\label{eq:rdim_def} \overline{\dim}_{R}(\mu) = \limsup \limits_{\eps \to 0} \frac{R_{\mu, 2}(\eps)}{\log \frac{1}{\eps}},
\end{equation}
where $R_{\mu, 2}(\eps)$ is the $L^2$ rate-distortion function (see Definition \ref{def:rate_distortion_function}). In \cite{CompressionCS17} results on analog compression in terms of the rate-distortion dimension are obtained using the techniques of compressed sensing. The authors consider a linear compression algorithm in which decompression (given via a suitable minimization problem) is based on a sequence of compression codes with distortion approaching zero. These codes depend on the process and are assumed to be known a priori. In our notation, the authors obtained the following result.

\begin{thm}\label{thm:CompressionCS} \cite[Corollary 2]{CompressionCS17}
	Let $\mu \in \mP_{\sigma}([0,1]^\Z)$. Then
	\[ \sup \limits_{\eps > 0}\ \r^{P,2}_{\LIN - \mB}(\mu, \eps) \leq \overline{\dim}_{R}(\mu). \]
\end{thm}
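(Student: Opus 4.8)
The plan is to recover, within the classes $\LIN$ and $\mB$, the linear compression scheme of \cite{CompressionCS17}, whose decompressor solves a minimization problem driven by a sequence of source codes of vanishing distortion. Fix $\mu\in\mP_{\sigma}([0,1]^{\Z})$ and $\gamma>0$. By (\ref{eq:rdim_def}) there is $\eps_0\in(0,\tfrac12)$ with $R_{\mu,2}(\eps)\le(\overline{\dim}_R(\mu)+\gamma)\log\tfrac1\eps$ for $0<\eps<\eps_0$, and by Remark \ref{rem:rate_distortion_properties} each $R_{\mu,2}(\eps)$ is finite. Since $\r^{P,2}_{\LIN-\mB}(\mu,\cdot)$ is non-increasing in $\eps$ (immediate from Definition \ref{def:Prob_ratio is achievable}) and bounded by $1$ (as $\id\in\LIN\cap\mB$), it is enough to build, for a fixed sequence $\eps_j\downarrow 0$ and each confidence level $1-\delta$, block lengths $N_j\to\infty$, linear maps $f_j\colon[0,1]^{N_j}\to[0,1]^{k_j}$ and Borel maps $g_j\colon[0,1]^{k_j}\to[0,1]^{N_j}$ with $k_j/N_j\to\overline{\dim}_R(\mu)+2\gamma$ and $\mu(\{x:\|x|_0^{N_j-1}-g_j\circ f_j(x|_0^{N_j-1})\|_2\ge C\eps_j\})\le\delta$ for a universal $C$; unwinding Definition \ref{def:Prob_ratio is achievable} (letting $j\to\infty$, then $\delta\to0$ and $\eps\to0$) then gives $\sup_{\eps>0}\r^{P,2}_{\LIN-\mB}(\mu,\eps)\le\overline{\dim}_R(\mu)+2\gamma$, and $\gamma\to0$ finishes.

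First I would convert the rate-distortion function into covering data. By the rate-distortion coding theorem for stationary sources (see \cite{Gray11}), for $\eps\in(0,\eps_0)$ and each $\delta>0$ there are, for all $N$ large, source codes $\mathcal{C}_N\subset[0,1]^{N}$ with $\tfrac1N\log\#\mathcal{C}_N\le R_{\mu,2}(\eps)+\gamma$ and $(\pi_N)_*\mu(\{z:\min_{y\in\mathcal{C}_N}\|z-y\|_2\le\eps\})\ge 1-\delta$. Thus $\#\mathcal{C}_N\le(1/\eps)^{N(\overline{\dim}_R(\mu)+2\gamma)}$, and, up to $\mu$-mass $\delta$, the $N$-block marginal is concentrated at scale $\eps$ on a set which behaves like one of box-counting dimension $N(\overline{\dim}_R(\mu)+2\gamma)$.

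Next I would compress this set linearly, which is the core of the argument. A compressed-sensing analysis — a generic (e.g.\ Gaussian) linear map $\Phi\colon\R^{N}\to\R^{k}$ with $k/N$ just above $\overline{\dim}_R(\mu)+2\gamma$, together with a minimization-based decoder $\psi\colon\R^{k}\to\R^{N}$ built from $\mathcal{C}_N$ — recovers the $N$-block up to normalized $\ell^2$-error $O(\eps)$ off a set of $\mu$-mass $O(\delta)$, with probability over $\Phi$ tending to one; this is the content of \cite{CompressionCS17} (in the spirit of \cite{SRAB17, WV10}). Crucially only a \emph{Borel} (not H\"older) inverse is demanded, which is why the measurement rate $k/N$ is governed by $\overline{\dim}_R(\mu)$ itself rather than by twice it, in contrast to the H\"older-inverse embedding underlying Theorem \ref{thm:main_holder_up}. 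Composing $\Phi$ with a fixed affine map that carries its compact image into $[0,1]^{k}$ — as in the remark after Theorem \ref{thm:wu_verdu} — keeps the compressor linear on $[0,1]^{N}$, while $\psi$ stays Borel; with $\eps=\eps_j\downarrow 0$ and $N=N_j\uparrow\infty$ chosen at matching rates, this yields the family required in the first paragraph.

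The main obstacle is the step of the preceding paragraph: making precise and quantitative, uniformly in the large block length $N$, the assertion that a set which is (up to $\mu$-mass $\delta$) a neighbourhood at scale $\eps$ of a codebook of size $\asymp(1/\eps)^{Nd}$ admits a linear image in $\R^{k}$ with $k\approx Nd$ from which it can be recovered up to $\ell^2$-error $O(\eps)$ by a \emph{measurable} map — with the sharp factor $1$ in front of $d$, which forces one to use a minimization-type decoder (exploiting the structure of rate-distortion codewords) rather than a crude nearest-codeword decoder — and controlling the measurability of that decoder. This is exactly the technical heart of \cite{CompressionCS17}; granting it, the present statement follows from the elementary reductions above, namely the transcription into $\LIN$ and $\mB$ (rescaling the compressor into $[0,1]^{k}$) and the passages to the limit in $N$, $\delta$, $\eps$ and $\gamma$.
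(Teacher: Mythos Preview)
This theorem is not proved in the paper at all: it is stated in Section~\ref{sec:related_works} purely as a citation of \cite[Corollary 2]{CompressionCS17}, with no accompanying argument. So there is no ``paper's own proof'' to compare against; the paper simply quotes the result as prior work.

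Your proposal is therefore not a reproduction of anything in this paper but a sketch of how the argument in \cite{CompressionCS17} goes. As a sketch it is reasonable and you correctly identify the crux: the linear-measurement-plus-minimization-decoder step, with the sharp factor $1$ (not $2$) in front of the dimension, is the entire technical content, and you explicitly grant it rather than prove it. That is honest, but it means your write-up is really a reduction to the cited result rather than an independent proof. If the intent was to supply a self-contained argument, the main gap is precisely the one you flag yourself.
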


\begin{rem}
	In applications, the measure governing the source is not always known, hence one may not have an access to the compression codes required in \cite{CompressionCS17}. A universal algorithm for a certain class of processes was proposed by Jalali and Poor in \cite{jalali2017universal}. For a measure $\mu \in \mP_{\sigma}([0,1]^\Z)$ define its \emph{upper information dimension} as
\[\overline{d}_0(\mu) = \lim_{n\rightarrow\infty} \frac{\uid((\pi_n)_* \mu)}{n}, \]
where $\uid((\pi_n)_* \mu)$ is the upper R\'{e}nyi information dimension of the measure $(\pi_n)_* \mu$ on $[0,1]^n$ (see \cite[Section IV]{jalali2017universal} for details). In terms of analog compression rates, \cite[Theorems 7,8]{jalali2017universal} give the following bound (for the definition of $\psi^*$-mixing see \cite[Definition 3]{jalali2017universal}):
	
	\begin{equation}\label{eq:jalali_poor} \sup \limits_{\eps > 0}\ \r^{P,2}_{\LIN - \mB}(\mu, \eps) \leq \overline{d}_0(\mu) \text{ if } \mu \in \mP_{\sigma}([0,1]^\Z) \text{ is }\psi^*\text{-mixing}.
	\end{equation}
	
	Theorem \ref{thm:CompressionCS} is stronger than (\ref{eq:jalali_poor}). Indeed, in general $ \overline{\dim}_{R, 2} \leq \overline{d}_0(\mu)$ (see \cite[Theorem 14]{GK18}) and the equality can be strict (see \cite[Example 1]{GK17}). Also, $\psi^*$-mixing is a quite restrictive assumption. However, inequality (\ref{eq:jalali_poor}) does not reflect the full substance of the original results of \cite{jalali2017universal}. Jalali and Poor proved more than merely existence of suitable linear compressors. More precisely, they proved that for any $\eta>0$, if $(X_n)_{n \in \Z}$ is a $\psi^*$-mixing stochastic process with distribution $\mu$ and $A_n \in \R^{n \times m_n}$ are independent random matrices with entries drawn i.i.d according to $\mathcal{N}(0, 1)$ and independently from $(X_n)_{n \in \Z}$ with $\frac{m_n}{n} \geq (1+ \eta) \overline{d}_0(\mu)$, then
	\[\|X|_0^{n-1} - g_n \circ A_n (X|_0^{n-1}) \|_2 \overset{n \to \infty}{\longrightarrow} 0 \text{ in probability}\]
	for some explicitly defined Borel functions $g_n : \R^{m_n} \to \R^n$ (depending only on $A_n$). Hence, for such a random sequence of matrices, the expected value
	\[ \E \mu( \{ x \in \mX : \|x|_{0}^{n-1} - g_n \circ A_n (x|_{0}^{n-1})\|_2 \geq \eps \}) \]
	tends to zero as $n \to \infty$ for any $\psi^*$-mixing measure $\mu \in \mP_{\sigma}([0,1]^{\Z})$. Inequality (\ref{eq:jalali_poor}) follows from this, since for any $\delta >0$ and $n$ large enough, there exists $A_n \in \R^{n \times m_n}$ satisfying
	\begin{equation}\label{e:jalali_poor} \mu( \{ x \in \mX : \|x|_{0}^{n-1} - g_n \circ A_n (x|_{0}^{n-1})\|_2 \geq \eps \}) \leq \delta.
	\end{equation}
	Decompressors $g_n$ are defined via a certain minimization problem (which makes the decompression algorithm implementable, though not efficient (cf. \cite[Remark 3]{jalali2017universal})). The authors proved also that, in a certain setting, such a compression scheme is robust to noise. More precisely, they proved (\cite[Theorems 9 and 10]{jalali2017universal})
	\[\|X|_0^{n-1} - g_n ( A_n X|_0^{n-1} + Z_n) \|_2 \overset{n \to \infty}{\longrightarrow} 0 \text{ in probability}\]
	as long as $A_n \in \R^{n \times m_n}$ has i.i.d entries distributed according to $\mathcal{N}(0, \frac{1}{n})$ and $(Z_n)_{n \in \N_0}$ is a stochastic process converging to zero in probability fast enough. Since the functions $g_n$, defined by a certain minimization problem, take only finitely many values, they cannot be taken to be continuous. The strength of the result is the universality of the compression scheme, which is designed without any prior knowledge of the distribution $\mu$: a random Gaussian matrix will serve as a good compressor as long as the rate is at least $\overline{d}_0(\mu)$ and the decompressor is explicit. However, it does not follow that one can choose a sequence of matrices $A_n$ satisfying (\ref{e:jalali_poor}) for all $\psi^*$-mixing measures $\mu$ with $\overline{d}_0(\mu) \leq d$ for some $d \in [0,1]$. \\
	
\end{rem}
We are also able to prove a lower bound on $\sup \limits_{\eps>0}\ \r^{L^2}_{\mathcal{B} - \mathcal{H}^2_{L, \alpha}}(\mu, \eps)$ for a fixed measure $\mu$ in terms of rate-distortion dimension $\overline{\dim}_R(\mu)$ (see Corollary \ref{cor:rate_lower_fixed_measure}). It turns out that $\overline{\dim}_{R}$ is equal to the information dimension rate $\overline{d}(\mu)$ introduced in \cite{GK17}. Under certain assumptions both of them coincide with $\overline{d}_0(\mu)$. See Section \cite[Section V]{GK18} for a comprehensive discussion and \cite[Section VI]{GK18} for more on the operational meaning of the information dimensions $\overline{d}(\mu)$ and $\overline{d}_0(\mu)$.

\section{Proofs of the main results}\label{sec:proofs}
\numberwithin{thm}{subsection}
\subsection{Proof of Theorem \ref{thm:main_holder_low}}\label{sec:lower}
Theorem \ref{thm:main_holder_low} is a direct consequence of Theorem \ref{thm:var_prin_R_tilde}, inequality (\ref{e:rate_comparision}) and Theorem \ref{thm:main} below. The latter one is of independent interest, as it gives a lower bound for compression rates $\r^{L^p}_{\mathcal{{B}}-\mathcal{{H}}^p_{L,\alpha}}(\mu,\eps)$ and $\r_{\mathcal{{B}}-\mathcal{{H}}_{L,\alpha}^\infty}(\mu,\eps)$ for fixed $\mu \in \mP_{\sigma}(\mS)$ and $\eps > 0$ (see also Corollary \ref{cor:rate_lower_fixed_measure} below).

\begin{thm}\label{thm:main}
	Let $\mS \subset [0,1]^\Z$ be closed and shift-invariant.  The following
	holds for $\mu\in  \mP_{\sigma}(\mS),\ 0<\alpha\leq1,\ L>0$ and $p \in [1, \infty)$:	
	\[
	\frac{R_{\mu,p}((\frac{L^p}{2^{p \alpha}}+\eps^{p(1-\alpha)})^{\frac{1}{p}}\eps^{\alpha})}{\log(\lceil\frac{1}{\eps}\rceil)}\leq \r^{L^p}_{\mathcal{{B}}-\mathcal{{H}}_{L,\alpha}^p}(\mu,\eps).
	\]
Similarly, for $p=\infty$ the following holds

\[
\frac{R_{\mu, 1}((\frac{L}{2^{ \alpha}}+\eps^{(1-\alpha)})\eps^{\alpha})}{\log(\lceil\frac{1}{\eps}\rceil)}\leq \r_{\mathcal{{B}}-\mathcal{{H}}_{L,\alpha}^\infty}(\mu,\eps).
\]
\end{thm}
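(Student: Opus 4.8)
The plan is to reduce Theorem \ref{thm:main} to a single blocklength estimate. Set $\delta:=\big(\tfrac{L^p}{2^{p\alpha}}+\eps^{p(1-\alpha)}\big)^{1/p}\eps^{\alpha}$, so that $\delta^p=\eps^p+\big(L(\eps/2)^{\alpha}\big)^p$. It suffices to show that for every $n\in\N$,
\[
R_{\mu,p}(n,\delta)\ \le\ \r^{L^p}_{\mathcal{B}-\mathcal{H}_{L,\alpha}^p}(\mu,\eps,n)\cdot\log\!\big(\lceil\tfrac{1}{\eps}\rceil\big).
\]
Indeed, taking the infimum over $n$ on the left (which by Definition \ref{def:rate_distortion_function} equals $R_{\mu,p}(\delta)$) and bounding $\inf_n$ by $\limsup_n$ on the right yields $R_{\mu,p}(\delta)\le\r^{L^p}_{\mathcal{B}-\mathcal{H}_{L,\alpha}^p}(\mu,\eps)\log(\lceil1/\eps\rceil)$, which is the assertion; it is essential here that the compression rate is defined with a $\limsup$ in $n$. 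So fix $n$ and an admissible triple $(k,f,g)$ for $\r^{L^p}_{\mathcal{B}-\mathcal{H}_{L,\alpha}^p}(\mu,\eps,n)$, i.e. $f\in\mathcal{B}$, $f:[0,1]^n\to[0,1]^k$, and $g\in\mathcal{H}_{L,\alpha}^p$, $g:[0,1]^k\to[0,1]^n$, with $\int_{[0,1]^n}\|x-g\circ f(x)\|_p^p\,d(\pi_n)_*\mu(x)\le\eps^p$; since $R_{\mu,p}(n,\delta)$ does not depend on the triple, it is enough to bound it by $\tfrac{k}{n}\log(\lceil1/\eps\rceil)$ and then pass to the infimum over admissible triples.

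\textbf{The channel.} Put $M:=\lceil1/\eps\rceil$, split $[0,1]$ into $M$ intervals of length $\le1/M$, and let $Q:[0,1]\to[0,1]$ send a point to the midpoint of its interval, so $|t-Q(t)|\le\tfrac{1}{2M}\le\tfrac{\eps}{2}$; let $Q_k:[0,1]^k\to[0,1]^k$ act coordinatewise, so $\|z-Q_k(z)\|_\infty\le\tfrac{\eps}{2}$, hence $\|z-Q_k(z)\|_p\le\tfrac{\eps}{2}$ for the normalized $\ell^p$ norm. Let $X=(X_0,\dots,X_{n-1})$ have law $(\pi_n)_*\mu$ and define $Y:=g\big(Q_k(f(X))\big)$, a random vector in $[0,1]^n$. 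Since $Q_k(f(X))$ takes at most $M^k$ values, so does $Y$, and therefore $I(X;Y)\le H(Y)\le k\log M=k\log(\lceil1/\eps\rceil)$. Consequently, once $(X,Y)$ is shown to satisfy the distortion constraint defining $R_{\mu,p}(n,\delta)$, we obtain $R_{\mu,p}(n,\delta)\le I(X;Y)/n\le\tfrac{k}{n}\log(\lceil1/\eps\rceil)$, as needed.

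\textbf{The distortion bound (the crux).} We must show $\E\|X-Y\|_p^p\le\delta^p=\eps^p+\big(L(\eps/2)^{\alpha}\big)^p$. Writing $Z:=g(f(X))$, the difference splits as $X-Y=(X-Z)+(Z-Y)$, where $\E\|X-Z\|_p^p\le\eps^p$ by the choice of $(f,g)$, and the second summand is bounded pointwise, since $g\in\mathcal{H}_{L,\alpha}^p$:
\[
\|Z-Y\|_p=\|g(f(X))-g(Q_k(f(X)))\|_p\le L\,\|f(X)-Q_k(f(X))\|_p^{\alpha}\le L(\eps/2)^{\alpha}.
\]
The point is to combine a summand carrying an almost-sure bound on its $\ell^p$-norm with a summand controlled only in $L^p$-mean so as to produce the $\ell^p$-type aggregate $\delta^p$, rather than the merely additive bound $(\eps+L(\eps/2)^{\alpha})^p$ that a crude triangle inequality would give. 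I expect this estimate — and any refinement of the channel $Y$ that it may require — to be the main obstacle of the proof. Granting it, $R_{\mu,p}(n,\delta)\le\tfrac{k}{n}\log(\lceil1/\eps\rceil)$, and the blocklength estimate follows on taking the infimum over admissible $(k,f,g)$, completing the finite-$p$ case.

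\textbf{The case $p=\infty$.} Here the admissible triples satisfy instead the probability-of-error bound $\mu(\{x:g\circ f(x|_0^{n-1})\neq x|_0^{n-1}\})\le\eps$, with $g\in\mathcal{H}_{L,\alpha}^\infty$. I would keep the same $Y=g(Q_k(f(X)))$. On the event $\{g(f(X))=X\}$, of probability at least $1-\eps$, one has $\|X-Y\|_1\le\|X-Y\|_\infty=\|g(f(X))-g(Q_k(f(X)))\|_\infty\le L\|f(X)-Q_k(f(X))\|_\infty^{\alpha}\le L(\eps/2)^{\alpha}$, while on its complement $\|X-Y\|_1\le1$; hence $\E\|X-Y\|_1\le L(\eps/2)^{\alpha}+\eps=\big(\tfrac{L}{2^\alpha}+\eps^{1-\alpha}\big)\eps^{\alpha}$. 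Together with $I(X;Y)\le k\log(\lceil1/\eps\rceil)$ as above, this gives the blocklength bound for $R_{\mu,1}$, and then $\inf_n\le\limsup_n$ yields the stated inequality. This case is genuinely simpler than the finite-$p$ one, because the reconstruction error and the quantization error now add in expectation — an $L^1$/probability splitting rather than a combination governed by $\ell^p$ geometry.
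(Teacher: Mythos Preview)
Your construction is identical to the paper's: for each $n$ and each admissible triple $(k,f,g)$, quantize $[0,1]^k$ into $\lceil 1/\eps\rceil^k$ cells via a center map $c$ (your $Q_k$), set $Y=g(c(f(X)))$, and use $I(X;Y)\le H(Y)\le k\log\lceil 1/\eps\rceil$. Your reduction to a single-blocklength estimate via $R_{\mu,p}(\delta)=\inf_n R_{\mu,p}(n,\delta)$ followed by $\inf_n\le\limsup_n$ is correct and marginally cleaner than the paper's, which instead fixes an auxiliary tolerance and picks one good $n$ from the $\limsup$. Your $p=\infty$ argument is complete and matches the paper's line for line.

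For finite $p$, the obstacle you flag is real, and the paper does not actually overcome it. The paper simply writes
\[
\int \|x-g\circ c\circ f(x)\|_p^p\,d(\pi_n)_*\mu\ \le\ \int \|x-g\circ f(x)\|_p^p\,d(\pi_n)_*\mu\ +\ \int \|g\circ f(x)-g\circ c\circ f(x)\|_p^p\,d(\pi_n)_*\mu,
\]
i.e.\ it uses $\|u+v\|_p^p\le\|u\|_p^p+\|v\|_p^p$ pointwise, which is valid for $p=1$ but fails for every $p>1$ (take $u=v\ne 0$). What is honestly available is Minkowski's inequality in $L^p$ of the product space $\big([0,1]^n,(\pi_n)_*\mu\big)\times\big(\{0,\dots,n-1\},\text{unif}\big)$, giving
\[
\big(\E\|X-Y\|_p^p\big)^{1/p}\ \le\ \big(\E\|X-Z\|_p^p\big)^{1/p}+\big(\E\|Z-Y\|_p^p\big)^{1/p}\ \le\ \eps+L(\eps/2)^{\alpha}.
\]
So the provable statement carries distortion threshold $\delta'=(L/2^{\alpha}+\eps^{1-\alpha})\eps^{\alpha}$ rather than the claimed $\delta=((L/2^{\alpha})^p+\eps^{p(1-\alpha)})^{1/p}\eps^{\alpha}$; since $\delta\le\delta'$ and $R_{\mu,p}$ is non-increasing, this is a formally weaker inequality than the theorem asserts. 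It is, however, fully sufficient for Theorem~\ref{thm:main_holder_low}, which is the only place Theorem~\ref{thm:main} is used: there one only needs $\log(1/\delta(\eps))/\log(1/\eps)\to\alpha$ as $\eps\to 0$, and both $\delta$ and $\delta'$ satisfy this. In short, your instinct is correct---for $p>1$ the exact constant in Theorem~\ref{thm:main} is not justified by the construction, but the Minkowski bound $\delta'$ closes the argument with no loss downstream.
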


\begin{proof}
	Let us begin with proving the first assertion (for $p < \infty$). Fix $\delta,\eps>0$. By the definition of $\r^{L^p}_{\mathcal{{B}}-\mathcal{H}^p_{L, \alpha}}(\mu,\eps)$, one may find $k,n\in\mathbb{N}$ with
	$\frac{k}{n}\leq \r_{\mathcal{{B}}-\mathcal{{H}}^p_{L,\alpha}}^{L^p}(\mu,\eps)+\delta$ and functions
	$f:[0,1]^{n}\rightarrow[0,1]^{k},\ f \in \mathcal{B}$,
	$g:[0,1]^{k}\rightarrow[0,1]^{n},\ g \in \mathcal{H}^p_{L, \alpha}$ such that
	\begin{equation}\label{eq:Lp_good_compressor} \int \limits_{[0,1]^n} \|x - g \circ f (x)  \|_p^p d (\pi_n)_* \mu(x) \leq \eps^p. \end{equation}
	Regularly partition $[0,1]^{k}$ into $\lceil\frac{1}{\eps}\rceil^{k}$ cubes of side
	$\lceil\frac{1}{\eps}\rceil^{-1}$ Borel-wise (thus every point
	in $[0,1]^{k}$ is associated by a Borel selector to exactly one cube
	with edge length $\lceil\frac{1}{\eps}\rceil^{-1}$ which contains
	it) and let $c:[0,1]^{k}\rightarrow F$ associate to each point the
	center of its cube. Note that $\#F=\lceil\frac{1}{\eps}\rceil^{k}$
	and $||x-c(x)||_p \leq||x-c(x)||_{\infty}\leq\frac{\eps}{2}$ for all $x\in[0,1]^{k}$.
	Define $Y:[0,1]^{n}\rightarrow[0,1]^{n}$
	by
	\[Y(p)=g(c(f(p))). \]
	and $X:[0,1]^{n} \to [0,1]^{n}$ by $X = \id$. This gives a pair of random vectors on the probability space $([0,1]^n, (\pi_n)_* \mu)$.
	We now estimate:
	\[
	\mathbb{E_{\mu}}\|(X_{0},X_{1},\dots,X_{n-1})-(Y_{0},Y_{1},\dots,Y_{n-1})\|_p^p = \int \limits_{[0,1]^n} \|x - g \circ c \circ f (x) \|_p^p d (\pi_n)_* \mu(x) \leq \]
	
	\[ \leq \int \limits_{[0,1]^n} \|x - g  \circ f (x) \|_p^p d (\pi_n)_* \mu(x) +  \int \limits_{[0,1]^n} \|g \circ f(x) - g \circ c \circ f (x) \|_p^p d (\pi_n)_* \mu(x) \leq \]
	\[ \leq \eps^p + \int \limits_{[0,1]^n} L^{p} \|f(x) - c \circ f(x)\|_p^{p\alpha}d (\pi_n)_* \mu(x) \leq \eps^p + L^p \frac{\eps^{p \alpha}}{2 ^ {p \alpha}}. \]
	This implies that we have found $X$ and $Y$ obeying condition (\ref{eq: distortion condition}) at scale $(\eps^p + L^p \frac{\eps^{p \alpha}}{2 ^ {p \alpha}})^{\frac{1}{p}} = (\frac{L^p}{2^{p \alpha}}+\eps^{p(1-\alpha)})^{\frac{1}{p}}\eps^{\alpha} $, hence
	\[
	R_{\mu,p}((\frac{L^p}{2^{p \alpha}}+\eps^{p(1-\alpha)})^{\frac{1}{p}}\eps^{\alpha})\leq\frac{1}{n}I(X;Y)\leq\frac{1}{n}H(Y)\leq \frac{\log(\lceil\frac{1}{\eps}\rceil^{k})}{n}=\frac{k\log(\lceil\frac{1}{\eps}\rceil)}{n} \leq \log(\lceil\frac{1}{\eps}\rceil)(\r_{\mathcal{{B}}-\mathcal{{H}}^p_{L, \alpha}}^{L^p}(\mu,\eps)+\delta).
	\]
	We end the proof by dividing by $\log(\lceil\frac{1}{\eps}\rceil)$.

For $p = \infty$ one considers the same construction, but functions $f$ and $g$ belong now to $\mB$ and $\mH_{L,\alpha}^\infty$ respectively and satisfy
	\[ \mu \big( \{x\in \mathcal{X}|\ g\circ f(x|_{0}^{n-1})\neq x|_{0}^{n-1}\} \big) \leq \eps \]
	instead of (\ref{eq:Lp_good_compressor}). Defining $X$ and $Y$ as before, we have
	\[
	\mathbb{E}_{\mu}\|(X_{0},X_{1},\dots,X_{n-1})-(Y_{0},Y_{1},\dots,Y_{n-1})\|_1 \leq \]
\[ \leq \int \limits_{[0,1]^n} \|x - g  \circ f (x) \|_\infty d (\pi_n)_* \mu(x)  + \int \limits_{[0,1]^n} \|g \circ f(x) - g \circ c \circ f (x) \|_\infty d (\pi_n)_* \mu(x) \leq \]
	\[ \leq \eps + \int \limits_{[0,1]^n} L \|f(x) - c \circ f(x)\|_{\infty}^{\alpha}d (\pi_n)_* \mu(x) \leq \eps + L \frac{\eps^{\alpha}}{2^{\alpha}}. \]
	This implies
	\[
	R_{\mu,1}((\frac{L}{2^{ \alpha}}+\eps^{1-\alpha})\eps^{\alpha})\leq\frac{1}{n}I(X;Y)\leq\frac{1}{n}H(Y)\leq \frac{\log(\lceil\frac{1}{\eps}\rceil^{k})}{n}=\frac{k\log(\lceil\frac{1}{\eps}\rceil)}{n}\leq \log(\lceil\frac{1}{\eps}\rceil)(\r_{\mathcal{{B}}-\mathcal{{H}}^{\infty}_{L, \alpha}}(\mu,\eps)+\delta).\]
\end{proof}

\begin{cor}\label{cor:rate_lower_fixed_measure}
Let $\mu \in \mP_{\sigma}([0,1]^\Z),\ \alpha \in (0,1],\ L>0$. Then (recall (\ref{eq:rdim_def}) for the definition of $\overline{\dim}_{R} (\mu)$)	\begin{equation}\label{e:rate_lower_fixed_measure} \alpha \overline{\dim}_{R} (\mu)  \leq \r^{L^2}_{\mathcal{B} - \mathcal{H}^2_{L, \alpha}}(\mu).
\end{equation}
\end{cor}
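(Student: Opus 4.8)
The plan is to obtain this as a straightforward limiting consequence of Theorem \ref{thm:main} applied with $p=2$, the only genuine content being the bookkeeping of how the distortion scale inside the rate-distortion function is rescaled by the H\"older exponent $\alpha$. Here $\r^{L^2}_{\mathcal{B}-\mathcal{H}^2_{L,\alpha}}(\mu)$ is to be read as $\sup_{\eps>0}\r^{L^2}_{\mathcal{B}-\mathcal{H}^2_{L,\alpha}}(\mu,\eps)$ (equivalently $\lim_{\eps\to0^{+}}\r^{L^2}_{\mathcal{B}-\mathcal{H}^2_{L,\alpha}}(\mu,\eps)$, these rates being non-increasing in $\eps$ by Remark \ref{r:rates_infty}); call this number $R$. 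If $R=\infty$ there is nothing to prove, so I would assume $R<\infty$. For $\eps\in(0,1)$ I set
\[
\delta(\eps)=\Big(\tfrac{L^{2}}{2^{2\alpha}}+\eps^{2(1-\alpha)}\Big)^{1/2}\eps^{\alpha},
\]
so that Theorem \ref{thm:main} with $p=2$ gives, for all $\eps\in(0,1)$,
\[
R_{\mu,2}(\delta(\eps))\leq\log\lceil 1/\eps\rceil\cdot\r^{L^2}_{\mathcal{B}-\mathcal{H}^2_{L,\alpha}}(\mu,\eps)\leq R\log\lceil 1/\eps\rceil.
\]

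Next I would carry out a change of variables. The function $\eps\mapsto\delta(\eps)$ is continuous and strictly increasing on $(0,1)$ (a product of two positive increasing functions of $\eps$) and satisfies $\delta(\eps)\to0$ as $\eps\to0^{+}$; hence it maps $(0,\eps_{0})$ homeomorphically onto $(0,\delta(\eps_{0}))$ for any $\eps_0\in(0,1)$, and consequently $\limsup_{\eps\to0^{+}}F(\delta(\eps))=\limsup_{t\to0^{+}}F(t)$ for any function $F$ defined near $0$. In particular
\[
\limsup_{\eps\to0^{+}}\frac{R_{\mu,2}(\delta(\eps))}{\log(1/\delta(\eps))}=\limsup_{t\to0^{+}}\frac{R_{\mu,2}(t)}{\log(1/t)}=\overline{\dim}_{R}(\mu).
\]
Moreover, as $\eps\to0^{+}$ one has $\log\lceil 1/\eps\rceil=\log(1/\eps)+O(1)$ and $\log(1/\delta(\eps))=\alpha\log(1/\eps)-\tfrac12\log\big(\tfrac{L^{2}}{2^{2\alpha}}+\eps^{2(1-\alpha)}\big)=\alpha\log(1/\eps)+O(1)$, the error term being bounded since $\tfrac{L^{2}}{2^{2\alpha}}+\eps^{2(1-\alpha)}$ stays between two positive constants for small $\eps$ (this includes the case $\alpha=1$, where the quantity is constant). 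Hence $\log(1/\delta(\eps))>0$ for small $\eps$ and $\dfrac{\log\lceil 1/\eps\rceil}{\log(1/\delta(\eps))}\to\dfrac1\alpha$.

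Finally I would divide the inequality $R_{\mu,2}(\delta(\eps))\leq R\log\lceil 1/\eps\rceil$ by $\log(1/\delta(\eps))$ and take $\limsup_{\eps\to0^{+}}$; combining the two displays of the previous paragraph gives
\[
\overline{\dim}_{R}(\mu)=\limsup_{\eps\to0^{+}}\frac{R_{\mu,2}(\delta(\eps))}{\log(1/\delta(\eps))}\leq R\cdot\lim_{\eps\to0^{+}}\frac{\log\lceil 1/\eps\rceil}{\log(1/\delta(\eps))}=\frac{R}{\alpha},
\]
which after multiplying by $\alpha$ is exactly $\alpha\,\overline{\dim}_{R}(\mu)\leq\r^{L^2}_{\mathcal{B}-\mathcal{H}^2_{L,\alpha}}(\mu)$. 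I do not expect a real obstacle here: the statement is essentially a corollary of Theorem \ref{thm:main}, and the only point deserving care is the one carried out above, namely checking that substituting the effective distortion level $\delta(\eps)$ for $\eps$ costs precisely the factor $\alpha$ in the $\log(1/\eps)$-normalization and nothing more, together with the elementary fact that a monotone continuous reparametrization of $\eps$ tending to $0$ leaves a $\limsup$ as $\eps\to0$ unchanged.
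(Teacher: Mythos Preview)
Your proof is correct and follows exactly the approach implicit in the paper: the corollary is stated there without proof, as an immediate consequence of Theorem \ref{thm:main} with $p=2$, and your argument supplies precisely the routine limiting computation (dividing by $\log(1/\delta(\eps))$ and checking that the rescaling $\eps\mapsto\delta(\eps)$ contributes the factor $1/\alpha$) that the paper leaves to the reader. One tiny imprecision: for $\alpha=1$ the factor $(L^2/2^{2\alpha}+\eps^{2(1-\alpha)})^{1/2}$ is constant rather than increasing, but the product $\delta(\eps)$ is still strictly increasing since $\eps^\alpha$ is, so nothing is affected.
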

\begin{rem}
	Note that taking supremum over $\mu \in \mP_{\sigma}(\mS)$ in (\ref{e:rate_lower_fixed_measure}) yields
	\[ \sup \limits_{\mu \in \mP_{\sigma}(\mS)} \alpha\overline{\dim}_{R} (\mu) \leq \sup \limits_{\eps>0} \sup \limits_{\mu \in \mP_{\sigma}(\mS)}\ \r^{L^2}_{\mathcal{B} - \mathcal{H}^2_{L, \alpha}}(\mu, \eps).\] This, however, does not imply Theorem \ref{thm:main_holder_low}, since there are systems with
	\[ \sup \limits_{\mu \in \mP_{\sigma}(\mS)} \limsup \limits_{\eps \to 0} \frac{R_{\mu,2}(\eps)}{\log \frac{1}{\eps}} < \ummdim(\mS)  \]
	(cf. \cite[Section VIII]{lindenstrauss_tsukamoto2017rate}). In other words, to obtain Theorem \ref{thm:main_holder_low}, the supremum over $\mu \in \mP_{\sigma}(\mS)$ has to be taken \emph{before} passing with $\eps$ to zero, as one cannot exchange these two operations in the variational principle.
\end{rem}

\subsection{Proof of Theorem \ref{thm:main_holder_up}}\label{sec:upper}
In order to develop the upper bound in Theorem \ref{thm:main_holder_up}, we will make use of the embedding theorem for the upper box-counting dimension. Below we present a corollary of \cite[Theorem 4.3]{Rob11} (with proof attributed to \cite{HK99}, based on an earlier result in \cite{benartzi1993holder}).

\begin{thm}\label{thm:box_embedding}
	Let $A$ be a compact subset of $\R^n$ and fix $p \in [1,\infty]$. If $k > 2 \udim(A)$, then given any $\alpha$ with
	\[ 0 < \alpha < 1 - \frac{2\udim(A)}{k}, \]
	for Lebesgue almost every linear map $F \in \mathrm{LIN}(\R^n, \R^k) \simeq \R^{nk} $ there exists $L = L(F,p)$ such that
	\[ \|x - y\|_p \leq L \|F(x) - F(y)\|_p^{\alpha}\ \text{ for all } x,\ y \in A. \]
	In particular, $F$ is one-to-one on $A$ with inverse which is $(L,\alpha)$-H\"{o}lder in the norm $\| \cdot \|_p$. 
\end{thm}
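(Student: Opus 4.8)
The plan is to reproduce the Hunt--Kaloshin prevalence argument (see \cite{HK99}, \cite{Rob11}, and the earlier \cite{benartzi1993holder}). \emph{First I would make three reductions.} Since all norms on $\R^n$ and $\R^k$ are equivalent, a linear map that is $(L,\alpha)$-H\"older invertible on $A$ in one norm is $(L',\alpha)$-H\"older invertible in any other (with $L'$ depending on the norm), so the set of ``good'' $F$ for $p=2$ is contained in the good set for an arbitrary $p$, and it suffices to treat $p=2$. Next, writing $A-A=\{x-y:x,y\in A\}$, the conclusion $\|x-y\|_2\le L\|Fx-Fy\|_2^\alpha$ for all $x,y\in A$ is equivalent to $\|Fw\|_2\ge L^{-1/\alpha}\|w\|_2^{1/\alpha}$ for all $w\in A-A$; since $A-A$ is a Lipschitz image of $A\times A$ one has $\udim(A-A)\le 2\udim(A)=:2d<k$. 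Finally, as being Lebesgue-null is stable under countable unions, it is enough to prove that for every $R\in\N$ the set of $F$ in the ball $B_R=\{F:\|F\|_{\mathrm{op}}\le R\}$ for which no such $L$ exists is Lebesgue-null in $\R^{nk}$.

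\emph{The first ingredient is a Fubini estimate:} there is a constant $C=C(n,k,R)$ such that for every $w\ne 0$ and every $t>0$,
\[ \Leb\{F\in B_R:\|Fw\|_2<t\}\le C\Big(\frac{t}{\|w\|_2}\Big)^{k}. \]
To see this one normalizes $\|w\|_2=1$ (both sets scale correctly, and $B_R$ is invariant under the rotation of $\R^{nk}$ induced by an orthogonal change of basis in $\R^n$), completes $w$ to an orthonormal basis so that $Fw$ becomes the first column of the matrix of $F$, and integrates out the remaining columns (which range over a bounded set) while the first column ranges over the ball $\{v\in\R^k:\|v\|<t\}$.

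\emph{Then comes the dyadic Borel--Cantelli step.} Fix $\delta>0$ small enough that $\alpha<1-\tfrac{2d+\delta}{k}$, which is possible since $\alpha<1-\tfrac{2d}{k}$. For $j\ge 0$ set $E_j=\{w\in A-A:2^{-j-1}<\|w\|_2\le 2^{-j}\}$, and for $j$ large cover $E_j$ by $M_j$ balls $B(w_i,r_j)$ with $w_i\in E_j$ and $r_j=2^{-j/\alpha}$; since $\udim(A-A)\le 2d$ one may take $M_j\le\#(A-A,r_j)\le r_j^{-(2d+\delta)}$. For $w\in E_j\cap B(w_i,r_j)$ one has $\|Fw\|_2\ge\|Fw_i\|_2-R\,r_j$, so the bad set
\[ Q_j=\{F\in B_R:\exists\,w\in E_j,\ \|Fw\|_2<L^{-1/\alpha}2^{-(j+1)/\alpha}\} \]
is contained in $\bigcup_i\{F\in B_R:\|Fw_i\|_2<(L^{-1/\alpha}+R)2^{-j/\alpha}\}$. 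Applying the Fubini estimate with $\|w_i\|_2>2^{-j-1}$ and summing over $i$ gives, for $j$ large, $\Leb(Q_j)\lesssim r_j^{-(2d+\delta)}\,2^{-jk(1/\alpha-1)}=2^{\,j[(2d+\delta)-k(1-\alpha)]/\alpha}$, whose exponent is strictly negative by the choice of $\delta$. Hence $\sum_j\Leb(Q_j)<\infty$, and Borel--Cantelli yields that for a.e.\ $F\in B_R$ there is $j_1(F)$ with $\|Fw\|_2\ge L^{-1/\alpha}2^{-(j+1)/\alpha}\ge(2L)^{-1/\alpha}\|w\|_2^{1/\alpha}$ for all $j\ge j_1(F)$ and all $w\in E_j$. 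It remains to handle $w\in A-A$ with $\|w\|_2$ bounded away from $0$: by a simpler instance of the same covering estimate (covering $\{w\in A-A:\|w\|_2\ge c\}$ at a scale comparable to $\eps$ and letting $\eps\to 0$) one gets that for a.e.\ $F\in B_R$, $\inf\{\|Fw\|_2:w\in A-A,\ \|w\|_2\ge c\}>0$ for every $c>0$. Combining the two ranges produces a single constant $L=L(F,2)$, and taking the countable union over $R\in\N$ (and over a sequence $\delta\downarrow 0$ if one wishes to let $\alpha$ approach $1-\tfrac{2d}{k}$) completes the proof; the final ``in particular'' sentence is then immediate.

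\emph{The main obstacle} is precisely the dyadic step: one must pick the covering scale $r_j$ of the annulus $E_j$ so that the perturbation error $R\,r_j$ stays comparable to the target lower bound $2^{-(j+1)/\alpha}$, while keeping the number $M_j\lesssim r_j^{-(2d+\delta)}$ of balls small enough that, after multiplying by the per-ball Fubini bound $2^{-jk(1/\alpha-1)}$, the resulting series converges. The convergence criterion is exactly the hypothesis $\alpha<1-\tfrac{2d}{k}$, which is where $k>2\udim(A)$ is used; the norm reduction, the Fubini lemma, and the bounded-away-from-zero case are all routine.
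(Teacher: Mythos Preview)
The paper does not prove this theorem itself; it quotes it as a corollary of \cite[Theorem~4.3]{Rob11}, with the proof attributed to \cite{HK99} (building on \cite{benartzi1993holder}). Your proposal is precisely the Hunt--Kaloshin prevalence argument carried out in those references, and it is correct. The only cosmetic point is that the constant $L$ appearing in your definition of $Q_j$ is superfluous: you may set $L=1$ there, run Borel--Cantelli to obtain $j_1(F)$, and then recover an $F$-dependent constant at the end by combining the small-$\|w\|$ estimate (valid for $j\ge j_1(F)$) with the compactness argument for $\|w\|\ge 2^{-j_1(F)}$.
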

\begin{rem} The only manner in which choosing the norm $\| \cdot \|_p$ influences the above theorem is the dependence of the Lipschitz constant $L$ on the norm. 
\end{rem}
See also \cite{BGS18} for an almost sure embedding theorem for Hausdorff dimension. However, it does not provide  H\"{o}lder inverse and argues that it cannot exist in general in the probabilistic context. Similar results for the modified lower box-counting dimension have been obtained also in \cite{LAC}.

We will use now Theorem \ref{thm:box_embedding} to prove Theorem \ref{thm:main_holder_up}. Let $\mS \subset [0,1]^\Z$ be a subshift. We shall prove
\begin{equation}\label{e:main_upper}
\inf \limits_{L>0}\ \r_{\mathrm{LIN}-\mathcal{{H}}^p_{L,\alpha}}(\mS,0) \leq  \frac{2}{1-\alpha}\umbdim(\mS)
\end{equation}
for every $0<\alpha < 1$ and $p \in [1, \infty]$. Fix $\eta > 0$. By Lemma \ref{l:R0_subadd} (applied with $\mathcal{C} = \mathrm{LIN}$) it suffices to prove that there exists $L>0$ and $n \in \N$ such that
\begin{equation}\label{eq:suff_upper} \r_{\mathrm{LIN}-\mathcal{{H}}^p_{L,\alpha}}(\mS,0,n) \leq \frac{2}{1 - \alpha}\umbdim(\mS) + \eta.
\end{equation}
	Fix $n \in \N$ large enough to obtain $\frac{1}{n} \leq \frac{\eta}{4}$ and
\begin{equation}\label{eq:bdim_approx_mbdim}
\frac{\udim(\pi_n(\mS))}{n} \leq \umbdim(\mS) + \frac{\eta(1-\alpha)}{4}.
\end{equation}
	By Theorem \ref{thm:box_embedding}, if $\alpha \in (0,1)$ and $k \in \N,\ k > 2 \udim(\pi_n(\mS))$ fulfil
	\begin{equation}\label{e:k_alpha}
	\alpha < 1 - \frac{2\udim(\pi_n(\mS))}{k},
	\end{equation}
	then there exists $f : [0,1]^n \to [0,1]^k$ in $\mathrm{LIN}$ and $L>0$ such that $f|_{\pi_n(\mS)}$ is injective with $(L, \alpha)$-H\"{o}lder inverse in the norm $\| \cdot \|_p$. According to the Remark \ref{rem:extension_theorems}, $f^{-1}: f(\pi_n(\mS) \to [0,1]^n$ can be extended to function $g : [0,1]^k \to [0,1]^n$ belonging to $\mH^p_{L', \alpha}$ for some $L' \geq L$.  We obtain that $\r_{\mathrm{LIN}-\mathcal{{H}}_{L',\alpha}}(\mS,0,n) \leq \frac{k}{n}$ for every such $k$. For fixed $\alpha \in (0,1)$, condition (\ref{e:k_alpha}) is satisfied for $k = \lceil \frac{2\udim(\pi_n(\mS))}{1-\alpha} \rceil + 1$, hence by (\ref{eq:bdim_approx_mbdim})
	\[ \r_{\mathrm{LIN}-\mathcal{{H}}^p_{L',\alpha}}(\mu,0,n) \leq \frac{1}{n} \Big(\lceil \frac{2\udim(\pi_n(\mS))}{1-\alpha} \rceil + 1 \Big)  \leq \frac{1}{n}\Big( \frac{2\udim(\pi_n(\mS)) }{1-\alpha} +2 \Big) \leq \]
	\[ \leq   \frac{2}{1-\alpha}\Big(\umbdim(\mS) + \frac{\eta(1-\alpha)}{4}\Big) + \frac{2}{n}  \leq \frac{2}{1-\alpha}\umbdim(\mS) + \eta. \]
Therefore $L'$ and $n$ satisfy (\ref{eq:suff_upper}). This concludes the proof of (\ref{e:main_upper}) and Theorem \ref{thm:main_holder_up}.

\begin{rem}
	Example \ref{ex:two_symbols_shift} shows that one cannot improve Theorem \ref{thm:main_holder_up} by claiming that there exists finite $L>0$ such that $\r_{\mathrm{LIN}-\mathcal{{H}}^1_{L,\alpha}}(\mS,0) \leq \frac{2}{1-\alpha}\umbdim(\mS)$. Indeed, for $\mS = \{0,1\}^\Z$ we have $\umbdim(\mS) = 0$, yet for $L>0$ inequality
	\[ \r_{\mathrm{LIN}-\mathcal{{H}}^1_{L,\alpha}}(\mS,0) \geq  \sup \limits_{\varepsilon > 0} \ \sup \limits_{\mu \in  \mP_{\sigma}(\mS)} \r^{L^1}_{\mathcal{B} - \mathcal{H}^1_{L, \alpha}}(\mu, \eps) > 0\]
	holds.
\end{rem}

\subsection{Proof of Proposition \ref{prop:peano_up}}\label{sec:peano}
The proof of Proposition \ref{prop:peano_up} is based on an existence of surjective H\"{o}lder maps between unit cubes. It is well known that there exists a surjective $\frac{1}{n}$-H\"{o}lder map  from $[0,1]$ onto $[0,1]^n$ (see \cite[Thm. 4.55]{Milne80}). This is a generalization of the classical Peano curve construction (see e.g. \cite{Sagan94}). We will use a similar construction to obtain the following proposition:

\begin{prop}\label{prop:holder_peano_cubes} For every $n, k \in \N,\ k \leq n,\ p \in [1,\infty]$ and $\alpha < \frac{k}{n}$ there exist maps $f : [0,1]^n \to [0,1]^k$ and $g : [0,1]^k \to [0,1]^n$ such that $f \in \mathcal{B},\ g \in \mathcal{H}_\alpha$ and $g \circ f (x) = x$ for every $x \in [0,1]^n$.
\end{prop}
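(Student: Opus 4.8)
The plan is to produce, for each $\alpha<k/n$, a continuous surjection $g\colon[0,1]^k\to[0,1]^n$ that is $\alpha$-H\"older, and then to let $f$ be any Borel right inverse of $g$; then $g\circ f=\id_{[0,1]^n}$ by construction, $f\in\mathcal{B}$, and $g\in\mathcal{H}_\alpha$ (recall that the class $\mathcal{H}_\alpha$ does not depend on the choice of $p$, so the role of $p$ in the statement is vacuous). The existence of such a Borel section is routine: since $g$ is continuous and $[0,1]^k$ is compact, the fibres $g^{-1}(x)$ are nonempty and compact, and the ``lexicographically minimal preimage'' map $f(x)=\min_{\mathrm{lex}}g^{-1}(x)$ is Borel. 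Indeed, $\{x:f_1(x)>c\}$ is the complement of the compact set $g(\{y:y_1\le c\})$, so $f_1$ is lower semicontinuous; and the subsequent coordinates $f_2,\dots,f_k$ are then Borel by an easy induction, using that the projection along the compact factor $[0,1]^k$ of a closed set is closed. (Alternatively one may invoke a standard measurable selection theorem for the compact-valued upper semicontinuous multifunction $x\mapsto g^{-1}(x)$.) In either case $f\in\mathcal{B}$ and $g\circ f=\id$, so it remains only to build $g$.

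To construct $g$, the key building block will be a ``codimension-one'' space-filling map: for every $m\ge1$ there is a continuous surjection $q_m\colon[0,1]^m\to[0,1]^{m+1}$ which is $\frac{m}{m+1}$-H\"older. Granting this, I set $g=q_{n-1}\circ q_{n-2}\circ\dots\circ q_k\colon[0,1]^k\to[0,1]^n$; it is a composition of continuous surjections, hence a continuous surjection, and its H\"older exponent telescopes:
\[
\prod_{m=k}^{n-1}\frac{m}{m+1}=\frac{k}{n}>\alpha,
\]
so $g$ is in particular $\alpha$-H\"older, as required. (When $k=n$ the composition is empty and one simply takes $g=f=\id$, which lies in $\mathcal{H}_\alpha$ since $\alpha<1$; this case needs no construction. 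The same $g$ can also be produced in a single step by applying the construction below directly to the pair $[0,1]^k$, $[0,1]^n$, whose scale-$j$ subdivisions into $t^{-nj}$- and $t^{-kj}$-cubes also have equal cardinalities $t^{nkj}$.)

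Each $q_m$ is obtained by a recursive subdivision along a common combinatorial scale, generalizing the classical Hilbert/Peano construction. Fix an integer base $t\ge2$; for $j\ge0$ let $\mathcal{D}_j$ be the partition of $[0,1]^m$ into the $t^{m(m+1)j}$ half-open cubes of side $t^{-(m+1)j}$ and $\mathcal{R}_j$ the partition of $[0,1]^{m+1}$ into the $t^{m(m+1)j}$ half-open cubes of side $t^{-mj}$, noting $\#\mathcal{D}_j=\#\mathcal{R}_j=t^{m(m+1)j}$. One then builds inductively a nested family of bijections $\Phi_j\colon\mathcal{D}_j\to\mathcal{R}_j$ (nested in the sense that $C'\subseteq C$ implies $\Phi_{j+1}(C')\subseteq\Phi_j(C)$) that is compatible with adjacency: cells consecutive in the chosen enumeration of $\mathcal{D}_j$ go to face-adjacent cells of $\mathcal{R}_j$. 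For $x\in[0,1]^m$ one then sets $q_m(x)$ to be the unique point of $\bigcap_{j\ge0}\overline{\Phi_j(D_j(x))}$, where $D_j(x)\in\mathcal{D}_j$ is the cell containing $x$. Adjacency-compatibility removes the boundary ambiguities and yields continuity and surjectivity, while the mesh ratio ($t^{-(m+1)j}$ in the domain against $t^{-mj}$ in the range) forces $\|q_m(x)-q_m(y)\|_\infty\le C\|x-y\|_\infty^{m/(m+1)}$ for some constant $C$; a harmless linear rescaling then absorbs $C$ to place $q_m$ in $\mathcal{H}_{m/(m+1)}$.

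The main obstacle is this last combinatorial step in dimension $m\ge2$, namely exhibiting the adjacency-compatible nested enumerations $\Phi_j$ — a genuinely multidimensional analogue of the Hilbert curve linking an $m$-dimensional domain grid to an $(m+1)$-dimensional range grid having the same number of cells at each scale. For $m=1$ this is exactly the construction referenced from \cite{Milne80,Sagan94}. For $m\ge2$ it suffices to produce a single self-similar pattern at scale $j=1$ with the correct adjacency behaviour (for a suitable $t=t(m)$), after which iterating it yields all the $\Phi_j$ and the H\"older estimate becomes bookkeeping; constructing and verifying that pattern is where the bulk of the work lies.
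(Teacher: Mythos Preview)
Your overall strategy is exactly the paper's: take a surjective $\alpha$-H\"older map $g\colon[0,1]^k\to[0,1]^n$ and let $f$ be a Borel right inverse. For the section, the paper verifies that $x\mapsto g^{-1}(\{x\})$ is Borel into the Effros Borel space and then invokes the Kuratowski--Ryll-Nardzewski selection theorem; your lexicographic-minimum argument (and the fallback to a general compact-valued selection theorem) is a perfectly valid alternative.

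The difference lies in how $g$ is produced. The paper does not construct $g$ at all: it simply cites its existence for every $\alpha<k/n$ from \cite[Comments to Problem 1988-5]{ArnoldProblems} and \cite{Shchepin2010}. You instead attempt an explicit construction via a telescoping composition $q_{n-1}\circ\cdots\circ q_k$ of codimension-one space-filling maps, which is a nice idea (and the product $\prod_{m=k}^{n-1}\frac{m}{m+1}=\frac{k}{n}$ is exactly right). But you leave the core step open: for $m\ge2$ you state that one needs adjacency-compatible nested bijections between the equal-cardinality grids of $[0,1]^m$ and $[0,1]^{m+1}$, and you explicitly say ``constructing and verifying that pattern is where the bulk of the work lies'' without actually doing it. That is a genuine gap in your write-up --- and it is precisely the content of the Shchepin reference the paper invokes. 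So your proof as written is incomplete in exactly the place where the paper closes the argument by citation; replacing your unfinished combinatorial construction with the same citation would make your proof complete and essentially identical to the paper's.
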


\begin{proof}
As $\alpha < \frac{k}{n}$, there exists a surjective and $\alpha$-H\"{o}lder map $g : [0,1]^k \to [0,1]^n$ (see \cite[Comments to Problem 1988-5]{ArnoldProblems} or \cite{Shchepin2010}). Let $C([0,1]^k)$ be the space of closed non-empty subsets of $[0,1]^k$ equipped with the Vietoris topology. Recall that its Borel space $B(C([0,1]^k))$	is the Effros Borel space of $[0,1]^k$, i.e. the $\sigma$-algebra generated by the sets of the form $\{ F \in C([0,1]^k) : F \cap U \neq \emptyset \}$ for open $U \subset [0,1]^k$ (\cite[12.7]{K95}). Note that $M : [0,1]^n \to C([0,1]^k)$ given as $M(x) = g^{-1}(\{x\})$ is Borel, as for every open $U \subset [0,1]^k$ there is \[M^{-1}(\{ F \in C([0,1]^k) : F \cap U \neq \emptyset \}) = \big\{ x \in [0,1]^n : g^{-1}(\{x\}) \in \{ F \in C([0,1]^k) : F \cap U \neq \emptyset \} \big\} = \]
\[ =  \{ x \in [0,1]^n : g^{-1}(\{x\})  \cap U \neq \emptyset \} = g(U). \] The latter set is Borel as $g$ is continuous and $U$ is $F_{\sigma}$ an open subset of a compact space $[0,1]^k$. By the Kuratowski-Ryll-Nardzewski selection theorem (\cite[12.13]{K95}) there is a Borel selector $s:C([0,1]^k) \to [0,1]^k$ (i.e. a Borel map satisfying $s(A) \in A$ for every $A \in C([0,1]^k)$). Let $f : [0,1]^n \to [0,1]^k$ be given as $f = s \circ M$. Then $f$ is Borel and $g(f(x)) = x$ as $f(x) = s (g^{-1}(\{x\})) \in g^{-1}(\{x\})$.
\end{proof}

Let us prove now Proposition \ref{prop:peano_up}. Let $\mS \subset [0,1]^\Z$ be a subshift. Fix $0 < \alpha \leq 1$ and $p \in [1, \infty]$. We shall prove
\begin{equation}\label{eq:peano_up} \inf \limits_{L>0}\ \r_{\mathcal{B} - \mathcal{H}_{L, \alpha}^{p}} (\mS, 0) \leq \alpha.
\end{equation}
The inequality is clear for $\alpha=1$, hence we can assume $\alpha < 1$. Fix arbitrary $\eta > 0$. By Lemma \ref{l:R0_subadd} (applied with $\mathcal{C} = \mathcal{B}$)
	\[\inf \limits_{L>0}\ \r_{\mathcal{B} - \mathcal{H}^p_{L, \alpha}}(\mS, 0) = \inf \limits_{L>0}\ \inf \limits_{n \in \N}\  \r_{\mathcal{B} - \mathcal{H}^p_{L, \alpha}}(\mS, 0, n),\]
	hence it is enough to prove that there exists $n \in \N$ and $L>0$ with $\r_{\mathcal{B} - \mathcal{H}_{L, \alpha}^p}(\mS, 0, n) \leq \alpha+\eta$. Let $k,n \in \N$ be such that $k \leq n$ and $\alpha < \frac{k}{n} \leq \alpha + \eta$. By Proposition \ref{prop:holder_peano_cubes} there exists $f : [0,1]^n \to [0,1]^k$ and $g : [0,1]^k \to [0,1]^n$ such that $f \in \mathcal{B},\ g \in \mathcal{H}_\alpha$ and $g \circ f (x) = x$ for every $x \in [0,1]^n$. Therefore there exists $L>0$ such that
	\[ \r_{\mathcal{B} - \mathcal{H}^p_{L, \alpha}}(\mS, 0, n) \leq \frac{k}{n} \leq \alpha+\eta. \]
As $\eta$ was arbitrary, (\ref{eq:peano_up}) is proved.

\numberwithin{thm}{section}
\section{Concluding remarks}\label{sec:concluding_remarks}
In this paper, we have considered the problem of finding worst-case compression rates, which are sufficient for \emph{all} stationary stochastic processes taking values in a given set $\mS \subset [0,1]^\Z$, under certain constraints of the compression and decompression process. In the case of Borel compression and $(L,\alpha)$-H\"{o}lder decompression (with both parameters fixed among the sequence of decoders), we have obtained a lower bound in terms of the metric mean dimension $\mmdim(\mS)$ of the dynamical system $(\mS, \mathrm{shift})$:
\begin{equation}\label{eq:conclude_main} \alpha \ummdim(\mS) \leq \sup \limits_{\mu \in \mP_{\sigma}(\mS)}\ \sup \limits_{\eps>0}\ \r_{\mB - \mH_{L,\alpha}^p}^{L^p}(\mS , \eps).
\end{equation}
Intuitively, this result states that for a given set of trajectories $\mS$, one can always find a stationary stochastic process supported in $\mS$, which cannot be compressed at a better rate than $\alpha \ummdim(\mS)$. This can be applied whenever geometric information on the signal is given (i.e. one knows $\mS$), but its statistics are unknown.

We have obtained also two zero-probability error upper bounds on the compression rates. The first one considers linear compression and H\"{o}lder decompression:
\[ \inf \limits_{L>0}\ \r_{\mathrm{LIN}-\mathcal{{H}}_{L,\alpha}^p}(\mS,0) \leq  \frac{2}{1-\alpha}\mbdim(\mS).\]
It is obtained by applying a finite-dimensional embedding theorem for the upper box-counting dimension. By considering $(N,K)$-sparse subshifts, we have shown that in general the constant $\frac{2}{1-\alpha}\mbdim(\mS)$ above cannot be improved in the zero-probability error case. The second upper bound considers Borel compression and H\"{o}lder decompression:
\[ \inf \limits_{L>0}\ \r_{\mathcal{B} - \mathcal{H}_{L, \alpha}^{p}} (\mS, 0) \leq \alpha. \]
The proof employs constructions of surjective H\"{o}lder maps between unit cubes akin to the Peano curve construction. This type of construction would not be possible for more stringent regularity requirements for the compressor.

This paper introduces notions and techniques from the theory of dynamical systems to the study of analog compression rates. Its main tool is the variational principle of Lindenstrauss and Tsukamoto \cite{lindenstrauss_tsukamoto2017rate}, which had previously established a link between ergodic theory and mean dimension theory.

As our main example, we have considered a parametrized family of sparse signal subshifts for which we proved that the lower bound (\ref{eq:conclude_main}) is in fact an equality (after taking infimum over $L>0$). Whenever equality holds in (\ref{eq:conclude_main}), it can be seen an operational characterization of metric mean dimension, as well as an answer to the given compression problem. It is therefore desirable to answer the following:

\begin{problem}\label{prob:equlatiy}
Under what conditions (for which subshifts) does equality hold in (\ref{eq:conclude_main}) (possibly after taking $\inf \limits_{L>0}$)?
\end{problem}

\appendix
\numberwithin{thm}{section}
\section{Metric mean dimension in dynamical systems}\label{app:mmdim}

In this section we present metric mean dimension in its original setting - the theory of dynamical systems. Recall that by a \emph{dynamical system} we understand triple $(\mX, \rho, T)$ consisting of a compact metric space $(\mX, \rho)$ and a homeomorphism $T : \mX \to \mX$. First, we introduce the notion of complexity of the system at scale $\eps > 0$.

\begin{defn}\label{d:mmdim}
\label{def:d_n}Let $(\mX, \rho)$ be a compact metric space and let $T:\mathcal{X}\to \mathcal{X}$
	be a homeomorphism. For $n\in\N$ define a metric $\rho_{n}$ on $\mathcal{X}$
	by $\rho_{n}(x,y)=\max \limits_{0\leq k<n}\rho(T^{k}x,T^{k}y)$.
	The upper and lower \textbf{metric mean dimensions} of the system $(\mX, T, \rho)$ are defined as
	\[
	\overline{\mdim}_{M}(\mathcal{X},T,d)=\limsup_{\eps\to0} \lim\limits_{n\to\infty}\frac{\log\#(\mathcal{X},\rho_{n},\eps)}{n\log\frac{1}{\eps}}
	\]
and	
	\[
	\underline{\mdim}_{M}(\mathcal{X},T,d)=\liminf_{\eps\to0} \lim\limits_{n\to\infty}\frac{\log\#(\mathcal{X},\rho_{n},\eps)}{n\log\frac{1}{\eps}},
	\]
where $\#(\mX, \rho_n, \eps)$ is the $\eps$-covering number of $\mX$ with respect to the metric $\rho_n$ (see Definition \ref{def:hashtag}). The limit with respect to $n$ exists due to the subadditivity of the sequence $n\mapsto\log\#(\mathcal{X},\rho_{n},\eps)$. If the upper and lower limits coincide, then we call its common value the \textbf{metric mean dimension} of $(\mX, T, \rho)$ and denote it by $\mmdim(\mX, T, \rho)$.
\end{defn}
\begin{rem}
Metric mean dimension is an invariant for bi-Lipshitz isomorphisms. Precisely, if $(\mX, T, \rho_1)$ and $(\mathcal{Y}, S, \rho_2)$ are dynamical systems and $\Phi : \mX \to \mathcal{Y}$ is a bi-Lipshitz bijection (i.e. a Lipschitz map with a Lipschitz inverse) which is equivariant (i.e. it satisfies $\Phi \circ T = S \circ \Phi$), then $\ummdim(\mX, T, \rho_1) = \ummdim(\mathcal{Y}, S, \rho_2)$.
\end{rem}

A topological version of mean dimension for actions of amenable groups was introduced by Gromov in \cite{G}. This invariant of topological dynamical systems was used by Lindenstrauss and Weiss in \cite{LW00} to answer a long standing question in topological dynamics: does every minimal\footnotemark \footnotetext{A system $(\mX, \rho, T)$ is called \emph{minimal} if for every $x \in \mX$, the orbit $\{ T^n x : n \in \Z \}$ is dense in $\mX$.} topological dynamical system embed into $([0,1]^\Z, \sigma)$? The answer is negative, since any system embeddable in $([0,1]^\Z, \sigma)$ has topological mean dimension at most one and in \cite{LW00} a minimal system with mean dimension strictly greater than one was constructed. Moreover, it was proved in \cite{GutTsu16}, that any minimal system with topological mean dimension strictly smaller than $\frac{D}{2}$ is embeddable into $(([0,1]^D)^\Z, \sigma)$. This constant is known to be optimal (cf. \cite{LT12}). One of the main tools in the proof is a variant of the Whittaker-Nyquist-Kotelnikov-Shannon sampling theorem, which indicates a connection between mean dimension theory and signal processing. For similar results for $\Z^k$ actions see \cite{gutman2017application} and \cite{GutLinTsu15}. For more on mean topological dimension see \cite{coornaert2015topological}. Metric mean dimension was introduced in \cite{LW00} and proved to be, when calculated with respect to any compatible metric, an upper bound for the topological mean dimension. It was recently successfully used in \cite{TsBrody18} and \cite{TsYangMills18} to obtain formulas for mean dimension of dynamical systems arising from geometric analysis.

If $\mS\subset[0,1]{}^{\mathbb{{Z}}}$ is a subshift, then Definition \ref{d:mmdim} with the dynamics given by the shift
\[ \sigma: \mS \to \mS,\ \sigma((x_{i})_{i=-\infty}^{\infty})=(x_{i+1})_{i=-\infty}^{\infty} \] and the metric
\[ \tau(x,y)=\sum \limits_{i=-\infty}^{\infty}\frac{1}{2^{|i|}}|x_{i}-y_{i}|,\ x,y \in \mS\]
gives the same value of metric mean dimension as Definition \ref{df:mmdim_subshift}:
\begin{prop}\label{prop:canonical} For a subshift $\mS\subset[0,1]^{\Z}$ it holds
	\[
	\overline{\mdim}_{M}(\mS,\sigma,\tau)=\limsup_{\varepsilon\to0}\lim_{n\rightarrow\infty}\frac{\log\#(\pi_{n}(\mS),||\cdot||_{\infty},\varepsilon)}{n\log\frac{1}{\varepsilon}}.
	\]
\end{prop}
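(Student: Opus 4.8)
The plan is to compare, on $\mS$, the metric $\tau_n(x,y):=\max_{0\le k<n}\tau(\sigma^k x,\sigma^k y)$ appearing in Definition~\ref{d:mmdim} with the pseudometric $d_n(x,y):=\|x|_0^{n-1}-y|_0^{n-1}\|_\infty$, noting that $\#(\pi_n(\mS),\|\cdot\|_\infty,\varepsilon)$ is exactly the least cardinality of a cover of $\mS$ by sets of $d_n$-diameter $<\varepsilon$. Everything rests on two elementary estimates. First, for each $0\le k<n$ the $i=0$ summand of $\tau(\sigma^k x,\sigma^k y)=\sum_{i\in\Z}2^{-|i|}|x_{i+k}-y_{i+k}|$ gives $|x_k-y_k|\le\tau(\sigma^k x,\sigma^k y)\le\tau_n(x,y)$, so $d_n\le\tau_n$ on $\mS$. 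Second, if $m\ge 1$ and $\|x|_{-m}^{\,n-1+m}-y|_{-m}^{\,n-1+m}\|_\infty<\delta$, then for every $0\le k<n$ and $|i|\le m$ one has $i+k\in[-m,\,n-1+m]$, so splitting the series for $\tau(\sigma^k x,\sigma^k y)$ at $|i|=m$ and using $\sum_{|i|\le m}2^{-|i|}\le 3$ together with $\sum_{|i|>m}2^{-|i|}=2^{-m+1}$ yields $\tau(\sigma^k x,\sigma^k y)<3\delta+2^{-m+1}$, hence $\tau_n(x,y)<3\delta+2^{-m+1}$. Since $\mS$ is shift-invariant, $\{x|_{-m}^{\,n-1+m}:x\in\mS\}=\pi_{n+2m}(\mS)$ and the map $x\mapsto x|_{-m}^{\,n-1+m}$ is continuous.

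Using the first estimate, the image under $\pi_n$ of a cover of $(\mS,\tau_n)$ by sets of diameter $<\varepsilon$ is a cover of $\pi_n(\mS)$ by $\|\cdot\|_\infty$-sets of diameter $<\varepsilon$; these need not be open, but passing from covering numbers by arbitrary small sets to covering numbers by open sets costs only a bounded distortion of the scale (likewise one may pass freely between covering, spanning and separating numbers), which will be harmless below. Using the second estimate, pulling a cover of $\pi_{n+2m}(\mS)$ by $\|\cdot\|_\infty$-sets of diameter $<\delta$ back along the continuous map $x\mapsto x|_{-m}^{\,n-1+m}$ gives an open cover of $\mS$ by sets of $\tau_n$-diameter $<3\delta+2^{-m+1}$, so $\#(\mS,\tau_n,3\delta+2^{-m+1})\le \#(\pi_{n+2m}(\mS),\|\cdot\|_\infty,\delta)$. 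Taking logarithms, dividing by $n$, and letting $n\to\infty$ with $m$ fixed (the factor $\tfrac{n+2m}{n}\to 1$ absorbs the enlarged window) gives, with $S_\tau(\varepsilon):=\lim_n\tfrac1n\log\#(\mS,\tau_n,\varepsilon)$ and $S_\infty(\varepsilon):=\lim_n\tfrac1n\log\#(\pi_n(\mS),\|\cdot\|_\infty,\varepsilon)$ (both limits exist by subadditivity),
\[
S_\tau(3\delta+2^{-m+1})\ \le\ S_\infty(\delta)\ \le\ S_\tau(\delta/2),
\]
the right-hand inequality up to the harmless distortion just mentioned.

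It remains to divide by $\log\tfrac1\delta$ and take $\limsup_{\delta\to 0}$. The right-hand inequality immediately gives $\limsup_{\delta\to0}\tfrac{S_\infty(\delta)}{\log 1/\delta}\le\ummdim(\mS,\sigma,\tau)$, since $\tfrac{\log(2/\delta)}{\log(1/\delta)}\to1$. For the left-hand inequality one must couple $m$ to $\delta$: taking $m=m(\delta):=\lceil\log\tfrac1\delta\rceil$ makes $2^{-m+1}\le 2\delta$, so $3\delta+2^{-m+1}\le 5\delta$ and, as $S_\tau$ is non-increasing, $S_\tau(5\delta)\le S_\infty(\delta)$; hence $\ummdim(\mS,\sigma,\tau)=\limsup_{\delta\to0}\tfrac{S_\tau(5\delta)}{\log 1/\delta}\le\limsup_{\delta\to0}\tfrac{S_\infty(\delta)}{\log 1/\delta}$. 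Combining the two directions proves the claimed equality; running the same argument with $\liminf$ in place of $\limsup$ yields the analogous identity for $\lmmdim$, and thus for $\mmdim$ when the two coincide. I expect the only genuinely delicate point to be this coupling of the truncation length $m$ to the scale $\delta$ (so that the additive error $2^{-m+1}$ does not swamp $\delta$); the remainder — the routine comparisons between the various covering, spanning and separating numbers, and checking that bounded multiplicative changes of scale disappear under $\limsup_{\delta\to0}\tfrac{\cdot}{\log 1/\delta}$ — is bookkeeping.
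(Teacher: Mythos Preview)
Your proof is correct and follows essentially the same approach as the paper's: both directions rest on the same two comparisons $d_n\le\tau_n$ and $\tau_n\le 3\,\|\cdot\|_\infty^{(n+2m)}+2^{-m+1}$, followed by the same coupling $m\sim\log(1/\delta)$ (the paper's Lemma~\ref{lem: projection inequality} is exactly your second estimate, phrased via $\varepsilon$-nets). The only cosmetic difference is in how the enlarged window $n+2m$ is absorbed: the paper uses submultiplicativity to write $\#(\pi_{n+2m}(\mS),\|\cdot\|_\infty,\varepsilon)\le \#(\pi_n(\mS),\|\cdot\|_\infty,\varepsilon)\,\#(\pi_m(\mS),\|\cdot\|_\infty,\varepsilon)^2$ and lets the $m$-factor vanish after dividing by $n$, whereas you simply use $\tfrac{n+2m}{n}\to 1$; both are equally valid and yield the same conclusion.
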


For the proof of Proposition \ref{prop:canonical} we will use the following lemma:
\begin{lem}
	\label{lem: projection inequality} Let $\mS\subset[0,1]^{\Z}$
	be a subshift. Fix $\eps>0$ and $m\in\N$ such
	that $2^{-m+2}<\eps$. Then for $n\in\N$ and $A \subset \mS$
	the inequality
	\[
	\#(A,\tau_{n},8\eps)\leq\#(\pi_{-(m-1)}^{n+m}(A),\| \cdot \|_{\infty},\eps)
	\]
	holds ($\tau_n$ denotes here the dynamical metric $\tau_{n}(x,y)=\max \limits_{0\leq k<n}\tau(\sigma^{k}x,\sigma^{k}y)$).
\end{lem}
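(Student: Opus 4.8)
The plan is to transfer a near-optimal cover of the finite window $\pi_{-(m-1)}^{n+m}(A)\subset[0,1]^{n+2m}$ to a cover of $A$ by sets that are small in the dynamical metric $\tau_n$. The guiding observation is that once $m$ is large enough that $2^{-m+2}<\eps$, the coordinates of a point lying outside the index window $\{-(m-1),\dots,n+m\}$ contribute less than $\eps$ to $\tau(\sigma^k x,\sigma^k y)$ for \emph{every} $k$ with $0\le k<n$ simultaneously; hence a set of points whose window-projections lie in a $\|\cdot\|_\infty$-ball of radius $<\eps$ automatically has small $\tau_n$-diameter.

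Concretely, I would fix an open cover $\{U_1,\dots,U_N\}$ of $\pi_{-(m-1)}^{n+m}(A)$ by subsets of $[0,1]^{n+2m}$ of $\|\cdot\|_\infty$-diameter $<\eps$ with $N=\#(\pi_{-(m-1)}^{n+m}(A),\|\cdot\|_\infty,\eps)$, and set $V_r=(\pi_{-(m-1)}^{n+m})^{-1}(U_r)\subset\mS$. These sets cover $A$, and each is open in $\mS$: it is the preimage of an open set under the continuous projection, hence open for $\tau$, and therefore open for $\tau_n$ as well, since $\tau_n\ge\tau$ pointwise forces every $\tau$-open set to be $\tau_n$-open. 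For the size estimate, take $x,y\in V_r$; then $|x_j-y_j|<\eps$ for all $j\in\{-(m-1),\dots,n+m\}$, while $|x_j-y_j|\le 1$ for all $j\in\Z$. For $0\le k<n$ I split
\[
\tau(\sigma^k x,\sigma^k y)=\sum_{|i|\le m-1}\frac{|x_{i+k}-y_{i+k}|}{2^{|i|}}+\sum_{|i|\ge m}\frac{|x_{i+k}-y_{i+k}|}{2^{|i|}}.
\]
In the first sum one has $i+k\in\{-(m-1),\dots,n+m-2\}\subseteq\{-(m-1),\dots,n+m\}$ whenever $|i|\le m-1$ and $0\le k\le n-1$, so every numerator is $<\eps$ and the sum is bounded by $\eps\sum_{i\in\Z}2^{-|i|}=3\eps$; in the second sum every numerator is $\le 1$, so the sum is at most $\sum_{|i|\ge m}2^{-|i|}=2^{-m+2}<\eps$ by hypothesis on $m$. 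Hence $\tau(\sigma^k x,\sigma^k y)<4\eps$ for each such $k$, and therefore $\tau_n(x,y)\le 4\eps$, i.e. $\diam(V_r,\tau_n)\le 4\eps<8\eps$.

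Putting this together, $\{V_1,\dots,V_N\}$ is an open cover of $A$ by sets of $\tau_n$-diameter less than $8\eps$, which yields $\#(A,\tau_n,8\eps)\le N=\#(\pi_{-(m-1)}^{n+m}(A),\|\cdot\|_\infty,\eps)$, the asserted inequality. I do not expect a genuine obstacle here; the only point requiring care is that the window must be wide enough to absorb all the shifts $\sigma^0,\dots,\sigma^{n-1}$ at once — this is exactly why its right endpoint is $n+m$ rather than $m$ — together with choosing $m$ so that the geometric tail $\sum_{|i|\ge m}2^{-|i|}$ falls below $\eps$. The comfortable gap between the $4\eps$ one actually obtains and the $8\eps$ in the statement is pure slack, leaving room if one prefers to thicken the $V_r$ or to argue with closed balls.
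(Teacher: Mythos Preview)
Your proof is correct and follows essentially the same route as the paper's: both arguments split $\tau(\sigma^k x,\sigma^k y)$ at $|i|=m$, bound the inner sum by $3\eps$ using the $\|\cdot\|_\infty$-control on the window and the outer tail by $2^{-m+2}<\eps$, arriving at $\tau_n<4\eps$. The only cosmetic difference is that the paper first passes to an $\eps$-net of the projection, lifts it to representatives in $A$, shows this is a $4\eps$-net for $\tau_n$, and then covers by $4\eps$-balls (hence diameter $<8\eps$), whereas you pull back the open cover directly and bound its $\tau_n$-diameter; your version is marginally more direct and explains why the factor $8$ rather than $4$ appears in the statement.
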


\begin{proof}
	Let $E\subset\pi_{-(m-1)}^{n+m}(A)$ be an $\eps$-net (recall Definition \ref{def:hashtag}) in the metric
	$\| \cdot \|_{\infty}$ on $[0,1]^{n+2m}$ with $\#E=\#(\pi_{-(m-1)}^{n+m}(A),\| \cdot \|_{\infty},\eps)$.
	Take $D\subset A$ consisting of representatives of sets $(\pi_{-(m-1)}^{n+m})^{-1}(\{x\}),\ x\in E$.
	Then $D$ is a $4\eps$-net in $A$ in the metric $\tau_{n}$.
	Indeed, for $y\in A$, there exists $x\in D$ such that $\|\pi_{-(m-1)}^{n+m}(y) -\pi_{-(m-1)}^{n+m}(x)\|_{\infty}<\eps$,
	hence for $0\leq j<n$ we have
	\[
	\tau(\sigma^{j}y,\sigma^{j}x)\leq2^{-m+2}+\sum\limits_{|k|<m}\frac{1}{2^{k}}|y_{k+j}-x_{k+j}| < 2^{-m+2}+ \eps\sum\limits_{|k|<m}\frac{1}{2^{k}}<4\eps.
	\]
	Taking a cover of $A$ by $4\eps$-balls with centers in $D$, we obtain the result.
\end{proof}

\begin{proof}(of Proposition \ref{prop:canonical})
	Observe first that for $x, y \in [0,1]^{\Z}$, the inequality $\tau_n(x,y) < \varepsilon$ implies $\|\pi_n(x) - \pi_n(y)\|_{\infty} < \varepsilon$ and hence
	\[ \# (\pi_n(\mS), \|\cdot\|_{\infty}, \varepsilon) \leq \#(\mS, \tau_n, \varepsilon). \]
	This gives
	\[ \limsup_{\varepsilon\to0}\lim_{n\rightarrow\infty}\frac{\log\#(\pi_{n}(\mS),||\cdot||_{\infty},\varepsilon)}{n\log\frac{1}{\varepsilon}} \leq \limsup_{\varepsilon\to0}\lim_{n\rightarrow\infty}\frac{\log\#(\mS,\tau_n,\varepsilon)}{n\log\frac{1}{\varepsilon}} =  \ummdim(\mS, \sigma, \tau).\]
	On the other hand, using Lemma \ref{lem: projection inequality} with $m = \lceil \log \frac{1}{\varepsilon} \rceil + 2$ and submultiplicativity of the function $n\mapsto\#(\pi_{n}(\mS),\|\cdot\|_{\infty},\eps)$ we obtain
	\[\#(\mS, \tau_n, 8\varepsilon) \leq \#(\pi_{-(m-1)}^{n+m}(\mS), \|\cdot\|_{\infty}, \varepsilon) \leq \#(\pi_n(\mS), \|\cdot\|_{\infty}, \varepsilon) \#(\pi_m(\mS), \|\cdot\|_{\infty}, \varepsilon)^2.  \]
	This yields
	\[\lim_{n\rightarrow\infty}\frac{\log\#(S,\tau_n,8\varepsilon)}{n}  \leq \lim \limits_{n \to \infty} \frac{ \log \#(\pi_{n}(\mS), \|\cdot\|_{\infty}, \varepsilon) + 2 \log \#(\pi_{m}(\mS), \|\cdot\|_{\infty}, \varepsilon)}{n} =\]
\[= \lim \limits_{n \to \infty} \frac{ \log \#(\pi_{n}(\mS), \|\cdot\|_{\infty}, \varepsilon)}{n}. \]
	Dividing both sides by $\log \frac{1}{8\varepsilon}$ and taking $\limsup \limits_{\varepsilon \to 0}$ ends the proof.
\end{proof}

\section{Proof of Proposition \ref{prop:mdim leq mbdim}}\label{app:proof_pro_mean_ineq}
We shall prove
\[	\overline{\mdim}_{M}(\mS)\leq\mbdim(\mS)\]
for a subshift $\mS \subset [0,1]^\Z$. Fix $\eta>0$. Take
$N\in\N$ with $\frac{1}{N}\overline{\dim}_{B}(\pi_{N}(\mS))\leq\mbdim(\mS)+\eta$.
Choose $\eps_{0}>0$ such that \[\#(\pi_{N}(\mS),\|\cdot\|_{\infty},\eps) \leq \eps^{-\overline{\dim}_{B}(\pi_{N}(\mS)) - \eta} \text{ for } 0<\eps<\eps_{0}.\]
Fix $\eps<\eps_{0}$. By the submultiplicativity of the function $n\mapsto\#(\pi_{n}(\mS),\|\cdot\|_{\infty},\eps)$, for $k \in \N$ we have
\[ \frac{\log \# (\pi_{kN}(\mS), \| \cdot\|_{\infty}, \eps)}{kN} \leq \frac{\log \#(\pi_{N}(\mS),\|\cdot\|_{\infty},\eps)}{N} \leq \frac{(\dim_B(\pi_{N}(\mS)) + \eta)\log \frac{1}{\eps}}{N} \leq\]
\[ \leq (\mbdim(\mS)+\eta(1 + \frac{1}{N}))\log \frac{1}{\eps}. \]
Therefore
\[ \ummdim(S) = \limsup \limits_{\eps \to 0} \lim \limits_{n \to \infty} \frac{\log \#(\pi_n(\mS), \| \cdot \|_{\infty}, \eps)}{n \log \frac{1}{\eps}} \leq \mbdim(\mS)+2\eta. \]
As $\eta>0$ was arbitrary, the proof is finished.

\section{Variational principle - details}\label{app:var_prin}
\numberwithin{thm}{subsection}

The goal of this section is to prove Theorem \ref{thm:var_prin_R_tilde}. It can be easily deduced from the variational principle for metric mean dimension in \cite{lindenstrauss_tsukamoto2017rate}, which is valid for dynamical systems more general than subshifts, once the corresponding rate-distortion functions are compared (see Proposition \ref{prop:R_J_comparison} below). Let us begin by formulating the original result of \cite{lindenstrauss_tsukamoto2017rate}.

\subsection{Variational principle for general dynamical systems}

\begin{defn}\label{def:tame_growth}(\cite[Condition II.3]{lindenstrauss_tsukamoto2017rate}) Let $(\mX, \rho)$ be a compact metric space. It is said to have the \textbf{tame growth of covering numbers} if for every $\delta>0$ we have
	\[ \lim \limits_{\eps \to 0} \eps^{\delta} \log \# (\mX, \rho, \eps) = 0. \]
\end{defn}
\begin{rem}\label{rem:tame_growth}
	Lindenstrauss and Tsukamoto observed that $([0,1]^\Z, \tau)$ has the tame growth of covering numbers, since $\log \#([0,1]^\Z, \tau, \eps) = O(|\log \eps |^2)$.
\end{rem}
In \cite{lindenstrauss_tsukamoto2017rate} the following definition
is made:

\begin{defn}\label{def:rate_dist_LT}
	Let $(\mX, T, \rho)$ be a dynamical system and $\mu\in \mP_{T}(\mathcal{X})$. For $ p \in [1, \infty)$ and $n\in\mathbb{{N}}$ define $\tilde{R}_{\mu,p}(\varepsilon,n)$ as the infimum
	of
	\[\frac{I(X;Y)}{n},\]
	where $X$ and $Y=(Y_{0},\dots,Y_{n-1})$ are random variables defined
	on some probability space $(\Omega,\mathbb{P})$ such that
	
	\begin{itemize}
		\item $X$ takes values in $\mathcal{X}$, and its law is given by $\mu$.
		\item Each $Y_{k}$ takes values in $\mathcal{X}$, and $Y$ approximates
		the process $(X,TX,\dots,T^{n-1}X)$ in the sense that
		\begin{equation}
		\mathbb{E}\left(\frac{1}{n}\sum_{k=0}^{n-1}\rho(T^{k}X,Y_{k})^p\right)<\varepsilon^p.\label{eq: distortion condition-1}
		\end{equation}
	\end{itemize}
	Similarly as in the Definition \ref{def:rate_distortion_function}, we can make use of the subadditivity of the sequence $n \mapsto n \tilde{R}_{\mu,p}(\eps, n)$ (which follows as in \cite[Theorem 9.6.1]{Gallager68}), to make the following definition:
	\[\tilde{R}_{\mu,p}(\varepsilon)=\lim_{n \to \infty} \tilde{R}_{\mu,p}(\varepsilon,n) = \inf_{n \in \N}\tilde{R}_{\mu,p}(\varepsilon,n).\]
	\begin{rem}\label{rem:rd_function_finite}
		As pointed out in \cite[Remark IV.3]{lindenstrauss_tsukamoto2017rate}, in Definition \ref{def:rate_dist_LT} it is enough to consider random vectors $Y$ taking finitely many values. As $I(X;Y) \leq H(Y) \leq \infty$ for such $Y$, we obtain also $\tilde{R}_{\mu, p}(\eps) < \infty$ for every $\eps>0$.
	\end{rem}
	
\end{defn}
In this setting, the variational principle for metric mean dimension states the following:

\begin{thm}\label{thm:lin_tsu_var_prin}(\cite[Corollary III.6]{lindenstrauss_tsukamoto2017rate}) Let $(\mX, \rho)$ be a compact metric space having the tame growth of covering numbers and let $T : \mX \to \mX$ be a homeomorphism. Then, for any $p \in [1, \infty)$
	\[ \ummdim(\mX, T, \rho) = \limsup \limits_{\eps \to 0} \frac{\sup_{\mu \in \mP_{T}(\mX)} \tilde{R}_{\mu, p}(\eps)}{\log \frac{1}{\eps}}\]
and
	\[ \lmmdim(\mX, T, \rho) = \liminf \limits_{\eps \to 0} \frac{\sup_{\mu \in \mP_{T}(\mX)} \tilde{R}_{\mu, p}(\eps)}{\log \frac{1}{\eps}}.\]
\end{thm}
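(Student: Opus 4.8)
The plan is to prove the two equalities by establishing, for each, the inequality "$\le$" and the inequality "$\ge$"; the routine half uses only finiteness of covering numbers, while the tame-growth hypothesis enters only in the other half. Both statements are proved by the same argument, run with $\limsup_{\eps\to0}$ in the first case and $\liminf_{\eps\to0}$ in the second, so I describe one scale $\eps$ at a time.

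The easy inequality, $\limsup_{\eps\to0}\frac{\sup_\mu\tilde R_{\mu,p}(\eps)}{\log\frac1\eps}\le\ummdim(\mX,T,\rho)$ (and its $\liminf$ counterpart), is obtained by quantization in the dynamical metric. Fix $\mu\in\mP_T(\mX)$, $\eps>0$ and $n\in\N$, pick an $\eps$-net $E\subset\mX$ for $\rho_n$ with $\#E=\#(\mX,\rho_n,\eps)$, and let $q\colon\mX\to E$ send each point to a nearest net point, so that $\rho(T^kx,T^kq(x))\le\rho_n(x,q(x))<\eps$ for all $0\le k<n$. With $X\sim\mu$ set $Y=(Y_0,\dots,Y_{n-1})$, $Y_k:=T^k(q(X))$; then the distortion condition (\ref{eq: distortion condition-1}) holds by construction, and since $Y$ is a deterministic function of $q(X)$ we get $I(X;Y)\le H(q(X))\le\log\#(\mX,\rho_n,\eps)$. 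Hence $\tilde R_{\mu,p}(\eps,n)\le\frac1n\log\#(\mX,\rho_n,\eps)$; letting $n\to\infty$ gives $\tilde R_{\mu,p}(\eps)\le\lim_n\frac1n\log\#(\mX,\rho_n,\eps)$, and dividing by $\log\frac1\eps$, taking $\sup_\mu$ and then $\limsup_{\eps\to0}$ (resp. $\liminf$) yields the claim. No tame-growth assumption is needed here.

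For the reverse inequality, $\ummdim(\mX,T,\rho)\le\limsup_{\eps\to0}\frac{\sup_\mu\tilde R_{\mu,p}(\eps)}{\log\frac1\eps}$, fix $\eps>0$ and let $S_n\subset\mX$ be a maximal $(\rho_n,\eps)$-separated set; by the standard comparison of separated and covering numbers, $\limsup_n\frac1n\log\#S_n$ and $\limsup_n\frac1n\log\#(\mX,\rho_n,\eps)$ agree up to replacing $\eps$ by $2\eps$, which is harmless after normalization by $\log\frac1\eps$. Form the empirical measure $\nu_n:=\frac1{\#S_n}\sum_{x\in S_n}\delta_x$, the Birkhoff average $\mu_n:=\frac1n\sum_{k=0}^{n-1}(T^k)_*\nu_n$, and a weak-$*$ limit $\mu:=\lim_j\mu_{n_j}$ along a subsequence; $\mu$ is $T$-invariant by the usual Krylov--Bogolyubov argument. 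The task is then to show that $\tilde R_{\mu,p}$, evaluated at a distortion scale $\eps'$ slightly below $\eps$, is at least $\limsup_j\frac1{n_j}\log\#S_{n_j}$ up to an error that vanishes after division by $\log\frac1\eps$ as $\eps\to0$.

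The heart of the matter is a mutual-information lower bound forced by a separated set together with a passage to the limit. If $X$ is uniform on $S_n$ and $Y=(Y_0,\dots,Y_{n-1})$ approximates $(X,TX,\dots,T^{n-1}X)$ at a scale $\eps'<\eps/2$ in the $\ell^p$-average sense of (\ref{eq: distortion condition-1}), then distinct points of $S_n$ have orbit segments that are $\rho_n$-far apart, so conditioning on $Y$ confines $X$ to a controlled number of net cells; a Fano/counting estimate then gives $I(X;Y)\ge\log\#S_n-\theta(n,\eps)$, with $\theta(n,\eps)$ bounded in terms of the one-step covering number $\log\#(\mX,\rho,\eps)$. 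Transferring this from the finite data $(\nu_n,\mu_n)$ to the invariant limit $\mu$ uses affinity of the relevant conditional-entropy functionals in the measure, lower semicontinuity of $\mu\mapsto\tilde R_{\mu,p}(\eps')$ in the weak-$*$ topology, and the subadditivity of $n\mapsto n\tilde R_{\mu,p}(\eps',n)$ to average over the $n$ translates and pass to the limit. This is precisely where the tame growth of covering numbers of $(\mX,\rho)$ is invoked: it forces $\log\#(\mX,\rho,\delta)$ to grow slower than $\delta^{-\kappa}$ for every $\kappa>0$, so that after normalization by $\log\frac1\eps$ the accumulated one-step resolution errors $\theta$ vanish as $\eps\to0$. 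Assembling these estimates gives $\limsup_j\frac1{n_j}\log\#S_{n_j}\le\sup_{\mu\in\mP_T(\mX)}\tilde R_{\mu,p}(\eps')+o_\eps(\log\frac1\eps)$, and letting $\eps'\nearrow\eps$ and then $\limsup_{\eps\to0}$ (resp. $\liminf$) after dividing by $\log\frac1\eps$ completes the proof.

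The main obstacle is exactly this transfer step. The rate-distortion function is superadditive in the block length $n$, which is the wrong direction for the averaging $\frac1n\sum_{k<n}(T^k)_*$ that manufactures an invariant measure, so one cannot simply take a limit of rate-distortion functions; closing the gap requires the convexity properties of mutual information (data processing, concavity of conditional entropy), careful bookkeeping of distortion scales (proving the bound with $\eps'$ strictly below $\eps$ and only afterwards letting $\eps'\nearrow\eps$), and the tame-growth hypothesis to absorb the single-step errors. The remaining ingredients---the easy inequality, weak-$*$ compactness, $T$-invariance of the limit, and the separated-versus-covering comparisons---are standard.
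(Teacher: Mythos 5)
The paper does not prove this theorem: it is quoted directly as \cite[Corollary III.6]{lindenstrauss_tsukamoto2017rate}, and the only related work in the paper is Proposition~\ref{prop:R_J_comparison}, which compares $\tilde R_{\mu,p}$ with the subshift-adapted $R_{\mu,p}$ so that the quoted result yields Theorem~\ref{thm:var_prin_R_tilde}. So there is no paper proof to compare against, only the external reference; what follows is an assessment of your sketch on its own terms.

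Your easy half (quantize in the Bowen metric $\rho_n$, set $Y_k=T^k q(X)$, and bound $I(X;Y)\le H(q(X))\le\log\#(\mX,\rho_n,\eps)$) is correct and standard. The hard half, however, contains a genuine gap at exactly the step you call ``the main obstacle,'' and the announced fix does not work. To pass from the Fano-type lower bound on $\tilde R_{\nu_n,p}(\eps',n)$ --- $\nu_n$ being the uniform measure on a maximal $(\rho_n,\eps)$-separated set $S_n$ --- to a lower bound on $\tilde R_{\mu,p}(\eps')$ for the weak-$*$ limit $\mu$, you invoke ``lower semicontinuity of $\mu\mapsto\tilde R_{\mu,p}(\eps')$.'' That inequality runs the wrong way: lower semicontinuity gives $\tilde R_{\mu,p}(\eps',m)\le\liminf_n\tilde R_{\mu_n,p}(\eps',m)$ at each fixed block length $m$, which \emph{caps} the value at the limit measure from above, whereas you need a \emph{lower} bound of the form $\tilde R_{\mu,p}(\eps')\gtrsim\limsup_n\frac1n\log\#S_n$. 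What would be needed is upper semicontinuity, which rate-distortion does not enjoy in general (Ces\`aro averaging and weak-$*$ limits can destroy the separation structure that the Fano estimate exploits). There is also a block-length mismatch that no pointwise-in-$m$ semicontinuity statement can bridge: your counting estimate controls the $n$-block rate-distortion of the non-invariant $\nu_n$, while $\tilde R_{\mu,p}(\eps')=\inf_m\tilde R_{\mu,p}(\eps',m)$ is an infimum over all block lengths of the rate-distortion of a different, invariant measure. This transfer is precisely where Lindenstrauss and Tsukamoto concentrate their technical effort, and it is where the tame-growth hypothesis is actually consumed; it is not closed by appealing to semicontinuity of the rate-distortion functional but requires a direct mutual-information estimate for the invariant measure built from the separated sets. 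Your proposal correctly locates the difficulty but does not supply the argument that overcomes it.
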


Velozo and Velozo \cite{velozo2017rate} provided an alternative formulation in terms of Katok entropy. For the extension of the Theorem \ref{thm:var_prin_R_tilde} to actions of countable discrete amenable groups see \cite{CDZ17}.

\subsection{Proof of Theorem \ref{thm:var_prin_R_tilde}}

The difference between our formulation of Theorem \ref{thm:var_prin_R_tilde} and the original result of Lindenstrauss and Tsukamoto (Theorem \ref{thm:lin_tsu_var_prin}) is in the use of two different definitions of rate-distortion functions: Theorem \ref{thm:var_prin_R_tilde} uses $R_{\mu,p}$ as defined in Definition \ref{def:rate_distortion_function}, while Theorem \ref{thm:lin_tsu_var_prin} uses $\tilde{R}_{\mu,p}$ as defined in Definition \ref{def:rate_dist_LT}. The former one is more convenient to work with in the case of subshifts $\mS \subset A^\Z$, yet it is formally different from the latter one: $R_{\mu,p}$ is defined in terms of the metric $d$ on the alphabet $A$, while $\tilde{R}_{\mu,p}$ is defined in terms of the product metric
\begin{equation}\label{eq:product_metric2}\rho(x,y) = \sum \limits_{i \in \Z} \frac{d(x_i, y_i)}{2^{|i|}}
\end{equation}
on the space $\mS \subset A^\Z$. Therefore deducing Theorem \ref{thm:var_prin_R_tilde} from Theorem \ref{thm:lin_tsu_var_prin} requires a technical lemma comparing the two rate-distortion functions. Indeed, Theorem \ref{thm:var_prin_R_tilde} follows Theorem \ref{thm:lin_tsu_var_prin}, Proposition \ref{prop:R_J_comparison} and Remark \ref{rem:tame_growth} below.

The following proposition shows that for a subshift $\mS \subset [0,1]^\Z$ considered as a dynamical system $(\mS, \sigma, \tau)$, the above definition of the rate-distortion function is comparable with the one introduced in Definition \ref{def:rate_distortion_function}.

\begin{prop}\label{prop:R_J_comparison} Let $(A,d)$ be a compact metric space, $\mS \subset A^\Z$ a subshift and $\mu \in \mP_{\sigma}(\mS)$. Then, for any $p \in [1, \infty)$
	\[\tilde{R}_{\mu, p}(14\eps)\leq R_{\mu, p}(\varepsilon)\leq \tilde{R}_{\mu, p}(\varepsilon),\]
where $R_{\mu, p}$ is defined as in Definition \ref{def:rate_distortion_function}, while $\tilde{R}_{\mu, p}$ is defined as in Definition \ref{def:rate_dist_LT} with metric $\rho$ given by (\ref{eq:product_metric2}).
\end{prop}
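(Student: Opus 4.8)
The plan is to establish the two displayed inequalities separately; the right-hand one, $R_{\mu,p}(\eps)\le\tilde{R}_{\mu,p}(\eps)$, is easy and the left-hand one, $\tilde{R}_{\mu,p}(14\eps)\le R_{\mu,p}(\eps)$, is the substantive part. For the easy inequality I would work at a fixed block length $n$ and then pass to the limit $n\to\infty$. Given random variables $X=(X_j)_{j\in\Z}\in\mS$ with law $\mu$ and $Y=(Y_0,\dots,Y_{n-1})$ with each $Y_k\in\mS$ that witness $\tilde{R}_{\mu,p}(n,\eps)$ up to $\delta$, set $\bar{X}:=\pi_n(X)$, which has law $(\pi_n)_*\mu$, and let $\bar{Y}:=\big((Y_0)_0,\dots,(Y_{n-1})_0\big)\in A^n$ be the string of zeroth coordinates of the $Y_k$. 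Since the product metric dominates its $i=0$ summand, $d(\bar{X}_k,\bar{Y}_k)\le\rho(\sigma^kX,Y_k)$ for $0\le k<n$, so $(\bar{X},\bar{Y})$ inherits the distortion condition (\ref{eq: distortion condition}); and since $\bar{X},\bar{Y}$ are deterministic functions of $X,Y$, the data-processing inequality gives $I(\bar{X};\bar{Y})\le I(X;Y)$. Hence $R_{\mu,p}(n,\eps)\le\tilde{R}_{\mu,p}(n,\eps)+\delta$, and letting $\delta\to0$ and $n\to\infty$ yields $R_{\mu,p}(\eps)\le\tilde{R}_{\mu,p}(\eps)$.

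For the main inequality the idea is to convert a near-optimal code for $R_{\mu,p}$ into a code for $\tilde{R}_{\mu,p}$. Fix $\delta>0$ and choose $m=m(\eps)\in\N$ so large that $\diam(A,d)\,2^{-m+1}\le\eps$; then $\sum_{|i|>m}2^{-|i|}d(x_i,y_i)\le\eps$ for all $x,y\in A^\Z$, so that only coordinates in $\{-m,\dots,m\}$ matter up to error $\eps$ in the metric $\rho$. For each $N\ge 2m$ put $M:=N+2m$ and pick $U=(U_0,\dots,U_{M-1})\sim(\pi_M)_*\mu$ and $V\in A^M$ witnessing $R_{\mu,p}(M,\eps)$ up to $\delta$, so $\E\big(\tfrac1M\sum_j d(U_j,V_j)^p\big)\le\eps^p$ and $\tfrac1M I(U;V)\le R_{\mu,p}(M,\eps)+\delta$. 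The key point is that the reconstruction must be turned into a string of elements \emph{of $\mS$}, so I would first replace $V$ by its nearest-point projection $V'$ onto the compact set $\pi_M(\mS)\subset A^M$ in the metric $\|\cdot\|_p$; since $U\in\pi_M(\mS)$ a.s.\ this at most doubles the distortion, $\E\|U-V'\|_p^p\le(2\eps)^p$, and it does not increase the mutual information, being a post-processing. By stationarity I identify $U$ with the window $X|_{-m}^{N-1+m}$ of a two-sided $X\sim\mu$ and generate $V'$ from it using extra independent randomness. Since $\pi_M(\mS)=\{x|_{-m}^{N-1+m}:x\in\mS\}$ by shift-invariance, a Borel selection (Kuratowski--Ryll-Nardzewski, as in the proof of Proposition \ref{prop:holder_peano_cubes}) lifts $V'$ to some $\hat{Y}\in\mS$ agreeing with $V'$ on coordinates $-m,\dots,N-1+m$; I then set $Y_k:=\sigma^k\hat{Y}\in\mS$ for $0\le k<N$.

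It remains to bound the distortion and the rate of this code. For $0\le k<N$ the window $\{k-m,\dots,k+m\}$ lies inside $\{-m,\dots,N-1+m\}$, so $\rho(\sigma^kX,Y_k)\le\sum_{|i|\le m}2^{-|i|}d(X_{k+i},V'_{k+i})+\eps$. Two applications of Minkowski's inequality in $L^p$ of $\Omega$ against the uniform measure on $\{0,\dots,N-1\}$, together with $\sum_{i\in\Z}2^{-|i|}=3$, the bound $\E\|U-V'\|_p^p\le(2\eps)^p$, and $M/N\le2$, give $\big(\E\tfrac1N\sum_{k<N}\rho(\sigma^kX,Y_k)^p\big)^{1/p}\le 6\,(M/N)^{1/p}\eps+\eps\le 13\eps<14\eps$, so the (strict) distortion constraint of $\tilde{R}_{\mu,p}(14\eps,N)$ holds. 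For the rate, $(Y_0,\dots,Y_{N-1})$ is a Borel function of $V'$, which depends on $X$ only through $X|_{-m}^{N-1+m}$ plus independent randomness, so the data-processing inequality gives $\tfrac1N I\big(X;(Y_0,\dots,Y_{N-1})\big)\le\tfrac1N I(U;V)=\tfrac MN\cdot\tfrac1M I(U;V)\le\tfrac MN\big(R_{\mu,p}(M,\eps)+\delta\big)$. Thus $\tilde{R}_{\mu,p}(14\eps,N)\le\tfrac{N+2m}{N}\big(R_{\mu,p}(N+2m,\eps)+\delta\big)$ for every $N\ge2m$, and letting $N\to\infty$ (using $\tfrac{N+2m}{N}\to1$ and $R_{\mu,p}(N+2m,\eps)\to R_{\mu,p}(\eps)$, the latter by subadditivity of $n\mapsto nR_{\mu,p}(n,\eps)$) followed by $\delta\to0$ gives $\tilde{R}_{\mu,p}(14\eps)=\inf_N\tilde{R}_{\mu,p}(14\eps,N)\le R_{\mu,p}(\eps)$.

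The main obstacle is precisely the requirement that each reconstruction vector $Y_k$ lie in $\mS$ rather than merely in $A^\Z$: a reconstructed finite window of a trajectory need not be a subword of any trajectory in $\mS$, so one cannot simply splice the reconstruction into $\sigma^kX$. The projection onto $\pi_M(\mS)$ resolves this, exploiting that $\mS$ is compact and shift-invariant, at the price of the factor $2$ in the distortion that ultimately inflates $\eps$ into $14\eps$. The remaining points are routine but require care: running all triangle inequalities through Minkowski rather than through $(a+b)^p\le2^{p-1}(a^p+b^p)$ so that the constant stays bounded independently of $p$, and setting up the auxiliary probability space so that the Markov-chain structure justifying the data-processing steps is genuinely in place.
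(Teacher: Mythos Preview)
Your proof is correct and follows essentially the same strategy as the paper: for the easy inequality you project to the zeroth coordinate and apply data-processing, and for the hard inequality you project the reconstruction onto $\pi_M(\mS)$, lift to $\mS$ via a Kuratowski--Ryll-Nardzewski selector, and apply data-processing again. The only differences are organizational: the paper works at a single block length $n$ and absorbs the boundary $2m$ terms by bounding them with $\diam(A^\Z)$ (requiring $(3\diam A)^p\tfrac{2m}{n}<\eps^p$), whereas you offset the block lengths ($M=N+2m$ versus $N$) so every relevant window is covered exactly and the correction appears as the harmless factor $M/N\to1$; and the paper tracks constants via $(a+b)^p\le2^{p-1}(a^p+b^p)$ and Jensen, while you use Minkowski throughout, which is a bit cleaner and gives the same constant $14$.
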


\begin{proof}
	We will first show $R_{\mu,p}(\varepsilon)\leq \tilde{R}_{\mu, p}(\varepsilon)$.
	Fix $\delta>0$ and let $n \in \N$ be such that, $\tilde{R}_{\mu,p}(\varepsilon,n)<\tilde{R}_{\mu, p}(\varepsilon)+\delta$.
	In particular we may find random variables $X$ and $Y=(Y_{0},\dots,Y_{n-1})$ taking values in $\mS$ and $\mS^n$ respectively, obeying (\ref{eq: distortion condition-1}) and satisfying $\frac{I(X;Y)}{n}<\tilde{R}_{\mu, p}(\varepsilon)+\delta$.
	Define $\tilde{X}_k = \pi_0 (\sigma^k X)$ and $\tilde{Y}_{k}=\pi_0(Y_{k})$ for $k=0, \ldots, n-1$. Note that
	\[d(\tilde{Y}_{k},\tilde{X}_{k})\leq\rho(\sigma^{k}X,Y_{k}),\]
	hence $\tilde{Y}$ obeys (\ref{eq: distortion condition}). Thus
	$R_{\mu,p}(\varepsilon) \leq R_{\mu,p}(n,\varepsilon)\leq\frac{I((\tilde{X}_0, \ldots, \tilde{X}_{n-1});(\tilde{Y}_0, \ldots, \tilde{Y}_{n-1})}{n}\leq\frac{I(X;Y)}{n}<\tilde{R}_{\mu, p}(\varepsilon)+\delta$.
	We have used here the data-processing lemma \cite[Lemma 2.2]{lindenstrauss_tsukamoto2017rate}.
	
	We will show now $\tilde{R}_{\mu,p}(14\varepsilon)\leq R_{\mu,p}(\varepsilon)$.
	Fix $\delta>0$ and let $m,n \in \N$ be such that $2^{p(-m+2)-1}\diam(A)^p<\eps^p$, $(3\diam(A))^p\frac{2m}{n} < \eps^p,\ n > 2m$ and $R_{\mu,p}(\varepsilon,n)<R_{\mu,p}(\varepsilon)+\delta$.
	In particular for such $n$ we may find random variables
	$\tilde{X}=(\tilde{X}_{0},\dots,\tilde{X}_{n-1})$ and $\tilde{Y}=(\tilde{Y}_{0},\dots,\tilde{Y}_{n-1})$ taking values in $A^n$,
	obeying (\ref{eq: distortion condition}) and $\frac{I(\tilde{X};\tilde{Y})}{n}<R_{\mu,p}(\varepsilon)+\delta$.
	Let us denote by $d_p$ the metric on $A^n$ given by
	\begin{equation}\label{eq:dp_def} d_p(x,y) = \bigg(\frac{1}{n}\sum\limits_{i=0}^{n-1}d(x_i, y_i)^p\bigg)^{\frac{1}{p}}. \end{equation}
	Take any Borel map $S : A^n \to \mathcal{S},\ S= (S_i)_{i \in \Z}$ such that for $x \in A^n$ and $y = (y_i)_{i \in \Z} \in \mS$ the following holds:
	\begin{equation}\label{e:selector} d_p(x, \pi_n(S(x))) \leq d_p(x, \pi_n(y)).
	\end{equation}
	Formally, such $S$ can be constructed as follows. Define $t :A^n \to \R$ as
	\[t(x)=\min  \{d_p(x, \pi_n(y)) : y \in \mS\}\]
	($t$ is well defined as $(A,d)$ is compact). Let $C(\mS)$ be the space of closed non-empty subsets of $S$
	equipped with the Vietoris topology. Recall that its Borel space $B(C(\mS))$
	is the Effros Borel space of $\mS$, i.e. the $\sigma$-algebra generated by the sets of the form $\{ F \in C(\mS) : F \cap U \neq \emptyset \}$ for open $U \subset \mS$ (\cite[12.7]{K95}). Define $M : A^n \to C(\mS)$ as
	\[M(x) =\big\{y\in \mS : d_p(x, \pi_n(y)) = t(x)\big\}.\]
	
	Note that $M:[0,1]^{n}\rightarrow C(\mS)$ is Borel as $M^{-1}(\{F\in C(\mS)|\,F\cap U\neq\emptyset)\})$
	is open in $[0,1]^{n}$ for all $U$ open in $\mS$. By
	the Kuratowski-Ryll-Nardzewski selection theorem (\cite[12.13]{K95})
	there is a Borel selector $s:C(\mS)\rightarrow \mS$.
	Now $S:A^n\rightarrow \mS$ defined as $S(x)=s(M(x))$ satisfies (\ref{e:selector}).
	
	Since the distribution of $\tilde{X}$ is $(\pi_n)_* \mu$, there exist random variables $X = (X_k)_{k \in \Z}$ taking values in $\mS$ with distribution $\mu$ and $Z = (Z_0, \ldots, Z_{n-1})$ taking values in $A^n$ such that the conditional distributions satisfy $\mathbb{P}(Z | \pi_n(X) = x) = \mathbb{P}(\tilde{Y} | \tilde{X} = x)$. Define random variables $Y_0, \ldots, Y_{n-1}$ taking values in $\mS$ by $Y_k = \sigma^k \circ S \circ Z$. Using (\ref{e:selector}) we obtain
	\begin{equation}\label{eq:selector_comparision}
	\begin{gathered}	
	\mathbb{E} \big( d_p(\pi_n(X) , \pi_n(Y_0))\big)^p = \mathbb{E} \big( d_p(\pi_n(X) , \pi_n(S \circ Z) ) \big)^p \leq \\
 \leq  \mathbb{E} \big(d_p(\pi_n(X) , Z) + d_p(Z ,\pi_n(S \circ Z))\big)^p \\ \leq 2^p\mathbb{E}\big( d_p(\pi_n(X), Z)\big)^p = 2^p\mathbb{E}\big( d_p(\tilde{X} , \tilde{Y})\big)^p.
	\end{gathered}
	\end{equation}
	
We have the following series of inequalities (see below for explanations)

\begin{align}
\label{eq:rate_comp_1}\frac{1}{n}\sum_{k=0}^{n-1}\rho(\sigma^{k} X,Y_{k})^p & \leq(3\diam(A))^p\frac{2m}{n}+\frac{1}{n}\sum_{k=m}^{n-m-1}\rho(\sigma^{k}X,\sigma^{k} (S \circ Z))^p \\
\label{eq:rate_comp_2} & \leq\eps^p+\frac{1}{n}\sum_{k=m}^{n-m-1}\big(\sum_{i=-m}^{m}\frac{1}{2^{|i|}}d(X_{k+i},S_{k+i} \circ Z)+2^{-m+1}\diam(A)\big)^p \\
\label{eq:rate_comp_3} & \leq \eps^p+\frac{2^{p-1}}{n}\sum_{k=m}^{n-m-1}\Big(\big(\sum_{i=-m}^{m}\frac{1}{2^{|i|}}d(X_{k+i},S_{k+i} \circ Z)\big)^p +2^{p(-m+1)}\diam(A)^p\Big) \\
\label{eq:rate_comp_4} & \leq \eps^p + 2^{p(-m+2)-1}\diam(A)^p + \frac{3^{p-1}2^{p-1}}{n} \sum_{k=m}^{n-m-1} \sum_{i=-m}^{m}\frac{1}{2^{|i|}}d(X_{k+i},S_{k+i} \circ Z)^p \\
\label{eq:rate_comp_5} & \leq 2\eps^p + \frac{3^{p}2^{p-1}}{n} \sum_{k=0}^{n-1} d(X_{k},S_{k} \circ Z)^p \\
\label{eq:rate_comp_6} & = 2\eps^p + 3^{p}2^{p-1}\big( d_p(\pi_n(X) , \pi_n(Y_0))\big)^p.
\end{align}
Inequality (\ref{eq:rate_comp_1}) follows from equality $\diam(A^\Z) = 3 \diam(A)$, while (\ref{eq:rate_comp_2})  follows from (\ref{eq:product_metric2}). Inequality (\ref{eq:rate_comp_3}) is obtained by applying inequality $(a+b)^p \leq 2^{p-1}(a^p + b^p)$ for $a,b \geq 0,\ p \in [1, \infty)$ (which follows from Jensen's inequality \cite[Theorem 3.3]{R87}). (\ref{eq:rate_comp_4}) follows from Jensen's inequality (recall that $\sum \limits_{i = -\infty}^{\infty} \frac{1}{2^{|i|}}  = 3$), while (\ref{eq:rate_comp_5}) follows from (\ref{eq:product_metric2}) and (\ref{e:selector}). Equality (\ref{eq:rate_comp_6}) follows from the definition of $d_p$ (see (\ref{eq:dp_def})). Taking the expected value and estimating further we obtain
\begin{align}
\nonumber \mathbb{E} \Big( \frac{1}{n}\sum_{k=0}^{n-1}\rho(\sigma^{k} X,Y_{k})^p \Big) &\leq 2\eps^p + 3^{p}2^{p-1} \mathbb{E} \big( d_p(\pi_n(X) , \pi_n(Y_0))\big)^p \\
\label{eq:ex_rate_comp_1}& \leq 2\eps^p + 3^{p}2^{2p-1}\mathbb{E} \big( d_p(\tilde{X} , \tilde{Y})\big)^p \\
\label{eq:ex_rate_comp_2}& \leq (2 + 3^{p}2^{2p-1})\eps^p \\
\nonumber & \leq (2^p + 3^p2^{2p})\eps^p \leq  (14\eps)^p.
\end{align}
Inequality (\ref{eq:ex_rate_comp_1}) follows from (\ref{eq:selector_comparision}). For (\ref{eq:ex_rate_comp_2}) recall that $\tilde{X}, \tilde{Y}$ obey (\ref{eq: distortion condition}). We conclude that $Y = (Y_0, \ldots , Y_{n-1})$ obeys (\ref{eq: distortion condition-1}) with $14\eps$. Thus $\tilde{R}_{\mu,p}(14\eps) \leq \tilde{R}_{\mu,p}(14\eps,n)\leq\frac{I(X;Y)}{n}\leq\frac{I(\tilde{X};\tilde{Y})}{n}<R_{\mu,p}(\varepsilon)+\delta$. We have used here the data-processing lemma \cite[Lemma 2.2]{lindenstrauss_tsukamoto2017rate}.
\end{proof}

\begin{rem}
	Theorem \ref{thm:var_prin_R_tilde} holds true for subshifts $\mS \subset A^\Z$ for general compact metric space $(A,d)$, as long as $\mS$ has the tame growth of covering numbers (see Definition \ref{def:tame_growth}). This follows from the fact that Proposition \ref{prop:R_J_comparison} is true in such generality.
\end{rem}
\numberwithin{thm}{section}

\section{"Almost subadditivity" lemma}

The following lemma shows that, in some cases, the upper limit with respect to $n$ in the definition of the compression rate can be replaced by an infimum. The proof is based on the fact that, for suitable regularity classes $\mC$ and $\mD$, the sequence $n \mapsto n \r_{\mC - \mD}(\mS, 0, n)$ is "almost subadditive".

\begin{lem}\label{l:R0_subadd}
	Let $\mathcal{C}$ be a regularity class such that
\begin{itemize} \item $\mathcal{C}$ is closed under taking (tensor) products, i.e. if $f_1 : [0,1]^{n_1} \to [0,1]^{k_1}$, $f_2 : [0,1]^{n_2} \to [0,1]^{k_2}$ belong to $\mathcal{C}$ then so does $f : [0,1]^{n_1 + n_2} \to [0,1]^{k_1 + k_2}$ given by $f(x,y) = (f_1(x), f_2(y))$ for $x \in [0,1]^{n_1}, y \in [0,1]^{n_2}$,
\item the identity $id : [0,1]^n \to [0,1]^n$ belongs to $\mathcal{C}$ for every $n \in \N$.
\end{itemize}
	Then, for every $p \in [1, \infty], L\geq 1,\ \alpha \in (0,1]$ and $\eps > 0$, the following holds
\begin{equation}\label{eq:subadd_eps} \r_{\mathcal{C}-\mathcal{{H}}_{L+\eps,\alpha}^p}(\mS,0) \leq \inf \limits_{n \in \N} \r_{\mathcal{C}-\mathcal{{H}}_{L,\alpha}^p}(\mS,0,n).
\end{equation}
Consequently,
\begin{equation}\label{eq:subadd_inf} \inf \limits_{L>0} \r_{\mathcal{C}-\mathcal{{H}}_{L,\alpha}^p}(\mS,0) = \inf \limits_{L>0}\ \inf \limits_{n \in \N} \r_{\mathcal{C}-\mathcal{{H}}_{L,\alpha}^p}(\mS,0,n).
\end{equation}
In particular, (\ref{eq:subadd_eps}) and (\ref{eq:subadd_inf}) are true for $\mC \in \{ \mB, \LIN\}$.
\end{lem}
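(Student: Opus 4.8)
The plan is to make the quantity $n\mapsto n\,\r_{\mathcal{C}-\mathcal{H}^p_{L,\alpha}}(\mS,0,n)$ behave subadditively by \emph{concatenating} copies of a good fixed-length zero-error code together with a leftover identity block, and to keep the H\"older constant of the concatenated decompressor under control via Jensen's inequality. The only point where something is lost is this control of the H\"older constant, and that loss — which vanishes as the block length grows — is exactly why the conclusion reads $L+\eps$ rather than $L$.

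Fix $m\in\N$ and $\delta>0$. Since $\id\in\mathcal{C}\cap\mathcal{H}^p_{L,\alpha}$ (the latter because $L\ge1$ and $\diam([0,1]^n,\|\cdot\|_p)=1$), one has $\r_{\mathcal{C}-\mathcal{H}^p_{L,\alpha}}(\mS,0,m)\le1$, so I may choose $k\le m$ with $k/m\le\r_{\mathcal{C}-\mathcal{H}^p_{L,\alpha}}(\mS,0,m)+\delta$ and maps $f\colon[0,1]^m\to[0,1]^k$ in $\mathcal{C}$, $g\colon[0,1]^k\to[0,1]^m$ in $\mathcal{H}^p_{L,\alpha}$ with $g\circ f(x|_0^{m-1})=x|_0^{m-1}$ for $\mu$-a.e.\ $x$, for every $\mu\in\mP_{\sigma}(\mS)$. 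For $n$, write $n=qm+s$ with $0\le s<m$ and put $F_n=f^{\otimes q}\otimes\id_s$, $G_n=g^{\otimes q}\otimes\id_s$ (with $F_n=f^{\otimes q}$, $G_n=g^{\otimes q}$ if $s=0$), acting between $[0,1]^n$ and $[0,1]^{qk+s}$. Iterating closure under tensor products and $\id\in\mathcal{C}$ gives $F_n\in\mathcal{C}$, with compression rate $(qk+s)/(qm+s)\le k/m+1/q$. Zero error passes to $(F_n,G_n)$: since the $j$-th length-$m$ block of $G_n\circ F_n(x|_0^{n-1})$ is $g\circ f\big((\sigma^{jm}x)|_0^{m-1}\big)$ and the leftover block is the identity, the failure set equals $\bigcup_{j=0}^{q-1}\sigma^{-jm}B_m$, where $B_m=\{y\in\mS:g\circ f(y|_0^{m-1})\ne y|_0^{m-1}\}$ is Borel with $\mu(B_m)=0$ for all $\mu\in\mP_{\sigma}(\mS)$; by shift-invariance the whole union is $\mu$-null.

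The key step is to show $G_n\in\mathcal{H}^p_{L_q,\alpha}$ with $L_q\to L$ as $q\to\infty$. For $p<\infty$, express the normalized $\ell^p$ distances on the input cube $[0,1]^{qk+s}$ and the output cube $[0,1]^{qm+s}$ of $G_n$ as convex combinations over the $q+1$ blocks, with weights $\mu_j$ (input) and $\lambda_j$ (output) proportional to the corresponding block sizes, each summing to $1$; let $b_j\in[0,1]$ be the $p$-th power of the normalized distance in the $j$-th input block. Using $g\in\mathcal{H}^p_{L,\alpha}$ on the $q$ main blocks and $\|\id(u)-\id(v)\|_p^p\le\|u-v\|_p^{p\alpha}\le L^p\|u-v\|_p^{p\alpha}$ on the leftover block (valid as $b_j\le1$ and $L\ge1$), together with concavity of $t\mapsto t^{\alpha}$ and Jensen's inequality, one obtains
\[\|G_n(z)-G_n(z')\|_p^p\le L^p\sum_j\lambda_j b_j^{\alpha}\le L^p\Big(\sum_j\lambda_j b_j\Big)^{\alpha}\le L^p\big(\max_j\lambda_j/\mu_j\big)^{\alpha}\Big(\sum_j\mu_j b_j\Big)^{\alpha}=L^p\big(\max_j\lambda_j/\mu_j\big)^{\alpha}\|z-z'\|_p^{p\alpha}.\]
A short computation of the ratios $\lambda_j/\mu_j$ (using $k\le m$, $s<m$, $k\ge1$) yields $\max_j\lambda_j/\mu_j\le1+m/q$, hence $L_q=L(1+m/q)^{\alpha/p}$. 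For $p=\infty$ there is no loss at all, since the $\ell^\infty$ norm of a tensor product is the maximum of the norms, so $G_n\in\mathcal{H}^\infty_{L,\alpha}$ directly.

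Given $\eps>0$, choose $q_0$ with $L(1+m/q)^{\alpha/p}\le L+\eps$ for $q\ge q_0$; then $\r_{\mathcal{C}-\mathcal{H}^p_{L+\eps,\alpha}}(\mS,0,n)\le k/m+1/q$ for all $n\ge q_0 m$, whence $\r_{\mathcal{C}-\mathcal{H}^p_{L+\eps,\alpha}}(\mS,0)=\limsup_n\r_{\mathcal{C}-\mathcal{H}^p_{L+\eps,\alpha}}(\mS,0,n)\le k/m\le\r_{\mathcal{C}-\mathcal{H}^p_{L,\alpha}}(\mS,0,m)+\delta$. Letting $\delta\to0$ and then taking the infimum over $m$ gives (\ref{eq:subadd_eps}). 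For (\ref{eq:subadd_inf}): ``$\ge$'' is immediate from $\r(\mS,0)=\limsup_n\r(\mS,0,n)\ge\inf_n\r(\mS,0,n)$, and ``$\le$'' follows by combining (\ref{eq:subadd_eps}) with the fact that $\r_{\mathcal{C}-\mathcal{H}^p_{L,\alpha}}$ is non-increasing in $L$ (so that the infimum over $L>0$ of either side equals the infimum over $L\ge1$) and letting $\eps\to0$. Finally, $\mathcal{B}$ and $\mathrm{LIN}$ both contain all identities and are closed under tensor products (a product of Borel maps is Borel because on a product of cubes the product $\sigma$-algebra coincides with the Borel one; a product of restrictions of linear maps is the restriction of their direct sum), so the statement applies to $\mathcal{C}\in\{\mathcal{B},\mathrm{LIN}\}$. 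The one genuinely non-formal point is the H\"older estimate of the third paragraph: the leftover block has size $s\ne m$ and the compressed blocks have size $k\ne m$, so the input and output block-weights do not match, producing the factor $(1+m/q)^{\alpha/p}$; everything else is bookkeeping, and since $m$ is fixed this factor tends to $1$ as $n\to\infty$.
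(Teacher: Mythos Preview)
Your proof is correct and follows essentially the same route as the paper's: fix a good length-$m$ zero-error code, concatenate $q$ copies together with an identity block on the leftover $s$ coordinates, use closure of $\mathcal{C}$ under products for the compressor, shift-invariance of $\mu$ for zero error, and Jensen's inequality (concavity of $t\mapsto t^{\alpha}$) to control the H\"older constant of the concatenated decompressor. The only cosmetic differences are that you phrase the Jensen step via abstract block-weights $\mu_j,\lambda_j$ (the paper writes the sums out explicitly) and you obtain the slightly coarser constant $(1+m/q)^{\alpha/p}$ in place of the paper's $(1+m/(qk))^{\alpha/p}$; both tend to $1$ as $q\to\infty$, which is all that is needed.
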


\begin{proof}
Let us begin by proving (\ref{eq:subadd_eps}). Fix $\delta > 0$ and choose $n_0 \in \N$ such that
\begin{equation}\label{eq:comp_rate_inf_n0}
\r_{\mathcal{C}-\mathcal{{H}}_{L,\alpha}^p}(\mS,0,n_0) \leq \inf \limits_{n \in \N} \r_{\mathcal{C}-\mathcal{{H}}_{L,\alpha}^p}(\mS,0,n) + \delta.
\end{equation}
Let $\r_{\mathcal{C}-\mathcal{{H}}_{L,\alpha}^p}(\mS,0,n_0) = \frac{k_0}{n_0}$, i.e. there exist functions $f_0 : [0,1]^{n_0} \to [0,1]^{k_0}$ and $g_0 : [0,1]^{k_0} \to [0,1]^{n_0}$ such that $f_0 \in \mC,\ g_0 \in \mH^p_{L, \alpha}$ and

\begin{equation}\label{eq:comp_rate_mu_f_0} \mu(\{ x \in \mS : g_0 \circ f_0 (x|_0^{n_0-1}) \neq x|_{0}^{n_0-1}\}) = 0 \end{equation}
for every $\mu \in \mP_{\sigma}(\mS)$. As $\id \in \mC$, we can assume that $k_0 \leq n_0$.

Fix $n \in \N$ and write it (uniquely) as $n = \ell n_0 + m$, where $\ell, m \in \N_0$ and $0 \leq m < n_0$. Define $f : [0,1]^n \to [0,1]^{\ell k_0 + m}$ as the $\ell$-fold product of $f_0$ followed by the identity on the remaining $m$ coordinates. More precisely, we set
\begin{align*} f(x_0, x_1, &\ldots , x_{n-1}) \\
& := (f_0(x_0, \ldots, x_{n_0-1}), f_0(x_{n_0}, \ldots, x_{2n_0 - 1}), \ldots, f_0(x_{(\ell-1)n_0}, \ldots, x_{\ell n_0 - 1}), x_{\ell n_0}, \ldots, x_{\ell n_0 + m-1}).
\end{align*}
By the assumptions on $\mC$, $f$ belongs to $\mC$. Similarly, we define $g : [0,1]^{\ell k_0 + m} \to [0,1]^n$ as the $\ell$-fold concatenation of $g_0$ followed by the identity on the remaining $m$ coordinates. More precisely, we set
\begin{align*}  g(x_0, x_1, &\ldots , x_{\ell k_0 + m}) \\ &:= (g_0(x_0, \ldots, x_{k_0-1}), g_0(x_{k_0}, \ldots, x_{2k_0 - 1}), \ldots, g_0(x_{(\ell-1)k_0}, \ldots, x_{\ell k_0 - 1}), x_{\ell k_0}, \ldots, x_{\ell k_0 + m-1}).
\end{align*}
The shift-invariance of $\mu$ together with (\ref{eq:comp_rate_mu_f_0}) and definitions of $f$ and $g$ give
\[ \mu(\{ x \in \mS : g \circ f (x|_0^{n-1}) \neq x|_{0}^{n-1}\}) = \mu(\{ x \in \mS : \underset{ 0 \leq j < \ell}{\exists}\ g_0\circ f_0 (x|_{jn_0}^{(j+1)n_0-1}) \neq x|_{jn_0}^{(j+1)n-1}\}) =  \]
\[ = \mu\Big( \bigcup \limits_{j=0}^{\ell-1} \sigma^{-jn_0}(\{ x \in \mS : g_0\circ f_0 (x|_{0}^{n_0-1}) \neq x|_{0}^{n_0-1}\})\Big) = 0. \]
Let us show now that for large $n$, the map $g$ belongs to $\mH^p_{L+\eps, \alpha}$, i.e. it is $(L+\eps,\alpha)$-H\"{o}lder in the norm $\| \cdot \|_p$. Let us consider the case $p \in [1,\infty)$, as the case $p=\infty$ is straightforward. Fix $x,y \in [0,1]^{\ell k_0 + m}$. We have the following series of inequalities (see below for explanations)

\begin{align}
\nonumber  \|g(x) - g(y)\|_p^p &=  \\
&\begin{gathered}\label{eq:lp_norm_holder_1} =  \frac{n_0}{\ell n_0+m} \sum \limits_{i=0}^{\ell-1} \|g_0(x|_{ik_0}^{(i+1)k_0 -1}) - g_0(y|_{ik_0}^{(i+1)k_0 -1}) \|_p^p
+ \frac{1}{\ell n_0+m}\sum \limits_{i=0}^{m-1} |x_{\ell k_0 + i} - y_{\ell k_0 +i}|^p
\end{gathered} \\
&\begin{gathered}\label{eq:lp_norm_holder_2} \leq L^p \Big(\frac{n_0}{\ell n_0 + m} \sum \limits_{i=0}^{\ell-1} \|x|_{ik_0}^{(i+1)k_0 -1} - y|_{ik_0}^{(i+1)k_0 - 1} \|_p^{p\alpha}
+ \frac{1}{\ell n_0+m} \sum \limits_{i=0}^{m-1} |x_{\ell k_0 + i} - y_{\ell k_0 +i}|^{p\alpha} \Big)
\end{gathered} \\
& \begin{gathered}\label{eq:lp_norm_holder_3}  \leq  L^p \Big(\frac{n_0}{\ell n_0 + m} \sum \limits_{i=0}^{\ell-1} \|x|_{ik_0}^{(i+1)k_0 -1} - y|_{ik_0}^{(i+1)k_0 - 1} \|_p^{p}
+ \frac{1}{\ell n_0+m} \sum \limits_{i=0}^{m-1} |x_{\ell k_0 + i} - y_{\ell k_0 +i}|^{p} \Big)^\alpha
\end{gathered}\\
& \begin{gathered}\label{eq:lp_norm_holder_4}  \leq  L^p \Big(\frac{n_0 (\ell k_0 + m) }{k_0 (\ell n_0 + m)}\Big)^{\alpha} \Big(\frac{k_0}{\ell k_0 + m} \sum \limits_{i=0}^{\ell-1} \|x|_{ik_0}^{(i+1)k_0 -1} - y|_{ik_0}^{(i+1)k_0 - 1} \|_p^{p} \\
\hfill + \frac{1}{\ell k_0+m} \sum \limits_{i=0}^{m-1} |x_{\ell k_0 + i} - y_{\ell k_0 +i}|^{p} \Big)^\alpha
\end{gathered}\\
&\label{eq:lp_norm_holder_5}  = L^p \Big(\frac{n_0 (\ell k_0 + m) }{k_0 (\ell n_0 + m)}\Big)^{\alpha} \| x - y\|_p^{p\alpha} \\
\nonumber &\leq L^p \Big(\frac{n_0 (\ell k_0 + n_0) }{\ell k_0n_0}\Big)^{\alpha} \| x - y\|_p^{p\alpha} \\
\nonumber & = L^p \Big(1 + \frac{n_0}{\ell k_0}\Big)^{\alpha}\| x - y\|_p^{p\alpha}.
\end{align}

Equality (\ref{eq:lp_norm_holder_1}) follows from the definition of the $p$-th norm (recall that it is normalized). Inequality (\ref{eq:lp_norm_holder_2}) follows from $g_0 \in \mH^p_{L,\alpha},\ L \geq 1,\ \alpha \in (0,1]$ and $x,y \in [0,1]^{\ell k_0 + m}$. Applying Jensen's inequality (see e.g. \cite[Theorem 3.3]{R87}) yields (\ref{eq:lp_norm_holder_3}). As $k_0 \leq n_0$, inequality (\ref{eq:lp_norm_holder_4}) follows. For (\ref{eq:lp_norm_holder_5}) we use the definition of the $p$-th norm once more.
If $n$ is large enough, then $L^p \Big(1 + \frac{n_0}{\ell k_0}\Big)^{\alpha} \leq (L + \eps)^p$ (since $\ell \to \infty$ as $n \to \infty$).
We have therefore found a compressor-decompressor pair consisting of $f \in \mC$ and $g \in \mH^p_{L + \eps,\alpha}$, hence we can estimate (for $n$ large enough)
\[ \r_{\mathcal{C}-\mathcal{{H}}_{L+\eps,\alpha}^p}(\mS,0,n) \leq \frac{\ell k_0 + m}{n} = \frac{\ell k_0 + m}{\ell n_0 + m} \leq \frac{k_0}{n_0} + \frac{n_0-1}{n} = \r_{\mathcal{C}-\mathcal{{H}}_{L,\alpha}^p}(\mS,0,n_0) + \frac{n_0-1}{n}.  \]
Taking $\limsup \limits_{n \to \infty}$ and applying (\ref{eq:comp_rate_inf_n0}), we obtain
\[ \limsup \limits_{n \to \infty} \r_{\mathcal{C}-\mathcal{{H}}_{L+\eps,\alpha}^p}(\mS,0,n) \leq \r_{\mathcal{C}-\mathcal{{H}}_{L,\alpha}^p}(\mS,0,n_0) \leq \inf \limits_{n \in \N}\ \r_{\mathcal{C}-\mathcal{{H}}_{L,\alpha}^p}(\mS,0,n) + \delta. \]
As $\delta>0$ was arbitrary, (\ref{eq:subadd_eps}) is proved.
For (\ref{eq:subadd_inf}) observe that the definition of $\r_{\mathcal{C}-\mathcal{{H}}_{L+\eps,\alpha}^p}(\mS,0)$ and (\ref{eq:subadd_eps}) imply that
\[ \inf \limits_{n \in \N} \r_{\mathcal{C}-\mathcal{{H}}_{L + \eps,\alpha}^p}(\mS,0,n) \leq \r_{\mathcal{C}-\mathcal{{H}}_{L+\eps,\alpha}^p}(\mS,0) \leq \inf \limits_{n \in \N} \r_{\mathcal{C}-\mathcal{{H}}_{L,\alpha}^p}(\mS,0,n).\]
Taking $ \inf \limits_{L>0}$ on both sides yields (\ref{eq:subadd_inf}), as the function $(0, \infty) \ni L \mapsto \inf \limits_{n \in \N} \r_{\mathcal{C}-\mathcal{{H}}_{L,\alpha}^p}(\mS,0,n)$ is non-increasing in $L$.
\end{proof}

\section{Examples - calculations}\label{app:examples}

\subsection{Example \ref{e:mmdim_0_mbdim_1}}\label{app:mmdim_0_mbdim_1}
Let
\[ \mS_m = \bigcup\limits_{n\in \Z}\sigma^{n}(A_{m}) \text{ and }\mS:=\bigcup\limits_{m\geq1} \mS_m,\]
where 
\[A_{m}:=\{ (\ldots, 0, 0)\} \times [0,\frac{1}{2^{m}}]^{m} \times \{ (0,0, \ldots) \}\subset[0,1]^{\Z},\]
with the cube $[0,\frac{1}{2^{m}}]^{m}$ located on coordinates $0, 1, \ldots, m-1$.
We shall prove
	\[ \mmdim(\mS) = 0\ \text{ and }\ \mbdim(\mS) = 1.\]
$\mS$ is clearly $\sigma$-invariant. Let $\vec{0} = (\ldots, 0, 0, 0, \ldots ) \in [0,1]^\Z$. Note that each $\mS_m$ is compact (in the metric $\tau$) with
	\[\diam(\mS_m) \leq \frac{2m}{2^m} \text{ and } \vec{0} \in \mS_m,\]
	hence $\mS$ is compact.
	Moreover, for every $x\in \mS$ it holds $\sigma^{n}x \to \vec{0}$
	as $n\to\infty$, hence $\mP_{\sigma}(\mS) = \{\delta_{\vec{0}}\}$. It follows from the Variational Principle (Theorem \ref{thm:var_prin_R_tilde}) that $\ummdim(\mS) = 0$. On
	the other hand, for every $m\geq1$ it holds
	\[
	[0,\frac{1}{2^{m}}]^{m}\subset\pi_{m}(\mS),
	\]
	hence $\udim(\pi_m(\mS)) \leq m$ and therefore $\mbdim(\mS)=1$.

\subsection{Example \ref{ex:two_symbols_shift}}\label{sub:two_symbols_shift}
The goal of this subsection is to prove  inequality (\ref{eq:two_symbols_shift_lower}). We will need the following lemma.

\begin{lem}\label{lem:binomial_sum_entropy}
Fix $\delta \in (0,\frac{1}{2}], n \in \N$ and $x \in \{0,1 \}^n$. Let $B(x,\delta)$ be the ball in the norm $\| \cdot \|_1$ on  $\{0,1\}^n$, i.e.
\[ B(x,\delta) = \{ y \in \{0,1 \}^n : \|x - y \|_1 < \delta\}. \]
Then $\#B(x,\delta) \leq 2^{nH(\delta)}$, where $H(\delta) = -\delta \log \delta - (1-\delta) \log (1 - \delta)$.
\end{lem}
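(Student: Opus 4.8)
The plan is to reduce the statement to the classical Chernoff-type bound on a partial sum of binomial coefficients. First I would note that, since all coordinates of $x$ and $y$ lie in $\{0,1\}$, one has $\|x-y\|_1 = \frac{1}{n}\sum_{k=0}^{n-1}|x_k-y_k| = \frac{1}{n}d_H(x,y)$, where $d_H$ denotes the Hamming distance on $\{0,1\}^n$. Hence $B(x,\delta) = \{\, y \in \{0,1\}^n : d_H(x,y) < n\delta \,\}$, and by translation invariance of the Hamming metric its cardinality does not depend on $x$; explicitly, $\#B(x,\delta) = \sum_{0 \le j < n\delta} \binom{n}{j}$.

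Next I would invoke (or quickly reprove) the standard inequality: for every integer $0 \le m \le n/2$,
\[ \sum_{j=0}^{m}\binom{n}{j} \le 2^{\,nH(m/n)}. \]
The short proof expands $1 = \big(\tfrac{m}{n} + (1-\tfrac{m}{n})\big)^n$ by the binomial theorem, discards all terms with $j > m$, and uses that $\tfrac{m/n}{1-m/n} \le 1$ together with $j \le m$ to bound each retained summand $\binom{n}{j}\big(\tfrac{m}{n}\big)^j\big(1-\tfrac{m}{n}\big)^{n-j}$ from below by $\binom{n}{j}\big(\tfrac{m}{n}\big)^m\big(1-\tfrac{m}{n}\big)^{n-m}$. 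Factoring this common quantity out of the sum and rearranging gives $\sum_{j=0}^m \binom{n}{j} \le \big(\tfrac{m}{n}\big)^{-m}\big(1-\tfrac{m}{n}\big)^{-(n-m)} = 2^{\,nH(m/n)}$, the last equality being just the definition of $H$.

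Finally I would take $m$ to be the largest integer strictly smaller than $n\delta$; if $n\delta \le 1$ then $m=0$ and $B(x,\delta)=\{x\}$, so the conclusion is trivial since $H(\delta)\ge 0$. Otherwise $m \le n/2$ because $\delta \le \tfrac12$, and $m/n \le \delta$. Since $H$ is non-decreasing on $[0,\tfrac12]$ (indeed $H'(\delta)=\log\frac{1-\delta}{\delta}\ge 0$ there), one concludes
\[ \#B(x,\delta) = \sum_{j=0}^{m}\binom{n}{j} \le 2^{\,nH(m/n)} \le 2^{\,nH(\delta)}, \]
which is the claim. I do not expect a genuine obstacle in this argument; the only points requiring mild care are the bookkeeping between the strict inequality $d_H < n\delta$ and the integer threshold $m$, and recording the monotonicity of $H$ on $[0,\tfrac12]$.
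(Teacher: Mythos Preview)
Your proof is correct and follows essentially the same approach as the paper: both reduce the problem to counting Hamming balls and then bound the partial binomial sum $\sum_{j \le n\delta}\binom{n}{j}$ by $2^{nH(\delta)}$. The only difference is that the paper simply cites this bound from \cite[Lemma 3.6]{Gray11}, whereas you supply the short Chernoff-type derivation and explicitly record the monotonicity of $H$ on $[0,\tfrac12]$; this makes your argument self-contained but otherwise identical in spirit.
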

\begin{proof}
Fix $y \in B(x, \delta)$. Then
\[ \frac{1}{n} \sum \limits_{i=0}^{n-1} |x_i - y_i| < \delta, \text{ where } x = (x_0, \ldots, x_{n-1}), y = (y_0, \ldots, y_{n-1}). \]
Therefore, the set $I_y = \{ 0 \leq i < n : x_i \neq y_i \}$ satisfies $I_y \leq \delta n$. Moreover, vector $y$ is uniquely determined by $I_y$, hence the assignment $B(x, \delta) \ni y \mapsto I_y \subset \{0, \ldots, n-1\}$ is injective. Consequently
\[ \#B(x, \delta) \leq \# \{ I \subset \{0, \ldots, n-1 \} : \#I \leq n\delta \} = \sum \limits_{0 \leq i \leq n\delta} {n\choose i} \leq 2^{nH(\delta)}, \]
where the last inequality follows from \cite[Lemma 3.6]{Gray11} (recall that we take logarithms in the base $2$).
\end{proof}

Let $\mS := \{0,1\}^{\Z}$. We will show that for any $ \alpha \in (0,1]$ and $L\geq1$ it holds that
\begin{equation}\label{eq:two_symbols_shift_lower1} \sup \limits_{\varepsilon > 0} \ \sup \limits_{\mu \in  \mP_{\sigma}(\mS)} \r^{L^1}_{\mathcal{B} - \mathcal{H}_{L, \alpha}^1}(\mu, \eps) \geq \frac{\alpha (1 - H(\frac{1}{4}))}{\log 8 L} > 0.
\end{equation}
As the quantity $\r^{L^1}_{\mathcal{B} - \mathcal{H}^1_{L, \alpha}}(\mu, \eps)$ is decreasing in $L$, this will be sufficient for proving (\ref{eq:two_symbols_shift_lower}) for all $L>0$.
We will find a single measure for which the lower bound in (\ref{eq:two_symbols_shift_lower1}) holds. Let $\mu = \bigotimes \limits_{\Z} (\frac{1}{2}\delta_0 + \frac{1}{2}\delta_1) \in \mP_{\sigma}({\mS})$. Fix $\varepsilon \in (0, \frac{1}{16})$. Fix $n \in \N$ and let $\r^{L^1}_{\mathcal{B} - \mathcal{H}_{L, \alpha}^1} (\mu, \varepsilon, n) = \frac{k}{n}$ for some $k \in \N$. There exists then a Borel map $f : [0,1]^n \to [0,1]^k$ and $(L, \alpha)$-H\"{o}lder map $g : [0,1]^k \to [0,1]^n$  (in norm $\| \cdot \|_1$) such that
\begin{equation}\label{eq:two_symbols_shift_L1_error}\int \limits_{[0,1]^n} \|x - g \circ f (x)\|_1 d (\pi_n)_* \mu(x) \leq \eps.
\end{equation}
	Clearly $(\pi_n)_* \mu = (\frac{1}{2}\delta_0 + \frac{1}{2}\delta_1)^{\otimes n}$. Let $A_n \subset \{ 0,1 \}^n$ be defined as $A_n = \{ x \in \{0,1\}^n : \|x - g \circ f (x) \|_1 \leq 2\eps \}$. By the Chebyshev inequality (see e.g. \cite[(5.30)]{BillingsleyPM}) and (\ref{eq:two_symbols_shift_L1_error}) we have
	\[\#\big( \{ 0,1 \}^n \setminus A_n) \leq \frac{2^n}{2\eps} \int \limits_{\{ 0,1 \}^n \setminus A_n} \|x - g \circ f (x)\|_1 d (\pi_n)_* \mu(x) \leq 2^{n-1}. \]
Therefore
\begin{equation}\label{eq:two_symbols_shift_A_sep}
\#A_n \geq 2^{n-1}.
\end{equation} Lemma \ref{lem:binomial_sum_entropy} (applied with $\delta = 8 \eps$) implies that there exists a subset $B_n \subset A_n$ satisfying
\begin{equation}\label{eq:two_symbols_shift_B_sep} \|x - y \|_1 \geq 8\eps \text{ for every } x,y \in B_n \text{ such that } x \neq y
\end{equation}
and
\begin{equation}\label{eq:two_symbols_shift_B_card} \#B_n \geq \frac{\#A_n}{2^{nH(8\eps)}}.\end{equation}
By the triangle inequality, (\ref{eq:two_symbols_shift_B_sep}) and the definition of $A_n$, for distinct $x,y \in B_n$ we have
\[ \| g \circ f (x) - g \circ f(y) \|_1 \geq \|x - y\|_1 - \|x - g \circ f (x)\|_1 - \|y - g \circ f (y)\|_1 \geq 4 \eps.\]
and consequently, as $g$ is $(L,\alpha)$-H\"{o}lder in $\| \cdot \|_1$,
\begin{equation}\label{eq:two_symbols_shift_f_sep}
4 \eps \leq \| g \circ f (x) - g \circ f(y) \|_1 \leq L \| f (x) - f(y) \|_1^{\alpha} \text{ for } x,y \in B_n,\ x \neq y.
\end{equation}
This implies that $f$ is injective on $B_n$, hence $\#f(B_n) = \# B_n$. Moreover, by (\ref{eq:two_symbols_shift_f_sep}), elements of the set $f(B_n)$ are $ \big(\frac{4\eps}{L}\big)^{\frac{1}{\alpha}}$-separated in the norm $\| \cdot\|_1$ on $[0,1]^k$, hence $\#f(B_n) \leq \big(\frac{4\eps}{L}\big)^{-\frac{k}{\alpha}}$ (as $f(B_n)$ is $\big(\frac{4\eps}{L}\big)^{\frac{1}{\alpha}}$-separated in the norm $\| \cdot\|_\infty$ as well, since $\|\cdot\|_{\infty} \geq \| \cdot \|_1$). Combining this with (\ref{eq:two_symbols_shift_A_sep}) and (\ref{eq:two_symbols_shift_B_card}) gives bounds
\[ 2^{n(1 - H(8\eps))-1} \leq \#B_n = \#f(B_n) \leq  \big(\frac{4\eps}{L}\big)^{-\frac{k}{\alpha}}. \]
Taking logarithms we obtain
\[ n(1 - H(8\eps))-1 \leq \frac{k}{\alpha} \log \frac{L}{4\eps},\]
therefore (recall that $L\geq 1$ and $\eps < \frac{1}{16}$, hence $\frac{L}{4\eps} > 1$ and $H(8\eps) < 1$)
\[  \r^{L^1}_{\mathcal{B} - \mathcal{H}_{L, \alpha}^1} (\mu, \varepsilon, n) = \frac{k}{n} \geq \frac{\alpha(1 - H(8\eps))}{ \log \frac{L}{4\eps}} - \frac{1}{n \log \frac{L}{4\eps}}. \]
This implies
\[ \r^{L^1}_{\mathcal{B} - \mathcal{H}_{L, \alpha}^1} (\mu, \varepsilon) = \limsup \limits_{n \to \infty}\ \r^{L^1}_{\mathcal{B} - \mathcal{H}_{L, \alpha}^1} (\mu, \varepsilon, n) \geq \frac{\alpha(1 - H(8\eps))}{ \log \frac{L}{4\eps}}. \]
We finally obtain
\[ \sup \limits_{\eps > 0}\ \r^{L^1}_{\mathcal{B} - \mathcal{H}_{L, \alpha}^1} (\mu, \varepsilon) \geq \r^{L^1}_{\mathcal{B} - \mathcal{H}_{L, \alpha}^1} (\mu, \frac{1}{32}) \geq  \frac{\alpha(1 - H(\frac{1}{4}))}{ \log 8L}. \]
Taking supremum over $\mu \in \mP_{\sigma}(\mS)$ gives (\ref{eq:two_symbols_shift_lower1}).

\subsection{Example \ref{ex:L_must_stay}}\label{sub:L_must_stay}
Let $A = \{0\} \cup \{\frac{1}{n} : n \in \N \}$ and $\mS = A^{\Z}$. Then, according to Example \ref{ex:general_full_shift},
\[\ummdim(\mS) = \mbdim(\mS) = \udim(A) = \frac{1}{2}.\]
We will prove
\[\sup \limits_{\eps > 0}\ \sup \limits_{\mu \in  \mP_{\sigma}(\mS)}\ \r_{\mathcal{B} - \mathcal{H}_{\alpha}}(\mu, \eps) = 0 \text{ for every }\alpha \in (0,1].\] To that end, fix $\varepsilon > 0,\ n \in \N$ and $\mu \in  \mP_{\sigma}(\mS)$. Since $(\pi_n)_* \mu$ is a discrete measure with countably many atoms, there exists a finite set $B \subset [0,1]^n$ with $(\pi_n)_* \mu (B) \geq 1 - \varepsilon$. Since $B$ is finite, there exists a linear map $F : \R^n \to \R$ such that $F([0,1]^n) \subset [0,1]$ and $F$ is injective on $B$ (e.g. projection onto a suitable line - there is only finitely many directions such that projection onto them will fail to be injective on a given finite set). Let $f : [0,1]^n \to [0,1]$ be given by $f = F|_{[0,1]^n}$. Let $g: f(B) \to [0,1]^n$ be its inverse on $f(B)$. Note that $g$ is $L$-Lipschitz with respect to the norm $\| \cdot \|_{\infty}$ for $L = \Big(\min \big\{ \|x - y\|_{\infty} : x, y \in f(B),\ x \neq y \big\}\Big)^{-1}$. The function $g$ can be extended to an $L$-Lipschitz map from $[0,1]$ to $[0,1]^n$ for some $L>0$ (see Remark \ref{rem:extension_theorems}). Now
	\[ \mu(\{ x \in \mS : g\circ f \circ \pi_n(x) = \pi_n(x) \}) \geq (\pi_n)_* \mu(B) \geq 1 - \varepsilon, \]
	hence $\r_{\mathrm{LIN} - \mathcal{L}}(\mu, n, \varepsilon) \leq \frac{1}{n}$ and $\r_{\mathrm{LIN} - \mathcal{L}}(\mu, \eps) = 0$. Consequently
	\[\sup \limits_{\eps > 0}\ \sup \limits_{\mu \in  \mP_{\sigma}(\mS)}\ \r_{\mathcal{B} - \mathcal{H}_{\alpha}}(\mu, \eps) \leq \sup \limits_{\eps > 0}\  \sup \limits_{\mu \in  \mP_{\sigma}(\mS)}\ \r_{\mathrm{LIN} - \mathcal{L}}(\mu, \eps) = 0. \]
	Note that in the above calculation we cannot guarantee a uniform bound on $L$ (with respect to $n$) as $\Big(\min \big\{ \|x - y\|_{\infty} : x, y \in f(B),\ x \neq y \big\}\Big)^{-1}$ can be arbitrary large for $n \to \infty$.

\subsection{Example \ref{ex:sparse_b-h_equality}}\label{sec:sparse_b-h_equality}
Let $\mS \subset[0,1]^\Z$ be the subshift from Example \ref{ex:sparse_subshift}, i.e.
\[ \mS = \{ x \in [0,1]^\Z : \underset{j \in \Z}{\forall}\ \|x|_{j}^{j+N-1} \|_0 \leq K\} \]
for $N,K \in \N,\ K \leq N$. We shall prove that for every $p \in [1,\infty], \alpha \in (0,1]$ it holds
\begin{equation}\label{eq:sparse_borel_rate2} \inf \limits_{L>0}\ \sup \limits_{\mu \in \mP_{\sigma}(\mS)} \sup \limits_{\eps>0}\ \r_{\mB - \mH^p_{L,\alpha}}(\mu, \eps) = \inf \limits_{L>0}\ \r_{\mB - \mH^p_{L,\alpha}}(\mS, 0) = \alpha \mmdim(\mS) = \frac{\alpha K}{N}.
\end{equation}
Theorem \ref{thm:main_holder_low} and the calculation from Example \ref{ex:sparse_subshift} give
\[ \frac{\alpha K}{N} = \alpha \mmdim(\mS) \leq  \inf \limits_{L>0}\ \sup \limits_{\mu \in \mP_{\sigma}(\mS)}\ \sup \limits_{\eps>0}\ \r_{\mB - \mH^p_{L,\alpha}}(\mu, \eps),\]
hence it suffices to prove
\begin{equation}\label{eq:sparse_unif_borel_rate} \inf \limits_{L>0}\ \r_{\mB - \mH^p_{L,\alpha}}(\mS, 0) \leq \frac{\alpha K}{N}.
\end{equation}
We will apply Lemma \ref{l:R0_subadd} and Proposition \ref{prop:holder_peano_cubes}. Fix $\eps>0$. Let $\ell \in \N$ be such that $\frac{3}{\ell N} \leq \eps$. Set $d:=\lceil \alpha \ell K \rceil + 1$. Then $\alpha < \frac{d}{\ell K}$, hence by Proposition \ref{prop:holder_peano_cubes} there exist $\phi : [0,1]^{\ell K} \to [0,1]^d,\ \psi:[0,1]^d \to [0,1]^{\ell K}$ such that $\phi \in \mB,\ \psi \in \mH_{\alpha}$ and $\psi \circ \phi(x) = x$ for every $ x \in [0,1]^{\ell K}$. Define the set
\[ \mA := \{ A \subset \{0, \ldots, \ell N - 1\} : \#A = \ell K \} \]
consisting of all subsets of $\{0, \ldots, \ell N-1 \}$ of cardinality $\ell K$. Let $C : \pi_{\ell N}(\mS) \to \mA$ be any Borel map such that $\supp(x) \subset C(x)$ holds for every $x \in \pi_{\ell N}(\mS)$. Such map exists as $\#\supp(x) \leq \ell K$ for every $x \in \pi_{\ell N}(\mS)$ (for vectors $x \in \pi_{\ell N}(\mS)$ with $\#\supp(x)<\ell K$, one can set $C(x)$ to be e.g. the union of $\supp(x)$ with $\ell K-\#\supp(x)$ first zero coordinates of $x$. This is clearly a Borel map). Moreover, let $s : \mA \to [\frac{1}{2},1]$ be a map such that $|s(A) - s(B)| \geq \frac{1}{2(\#\mA - 1)}$ for $A \neq B$ (i.e. it assigns to sets $A \in \mA$ points from $[\frac{1}{2}, 1]$ in a maximally separated fashion). We are now ready to define a compressor $f:[0,1]^{\ell N} \to [0,1]^{d+1}$ and a decompressor $g:[0,1]^{d+1} \to [0,1]^{\ell N}$ with $f \in \mB$ and $g \in \mH^p_{L,\alpha}$ for some $L$ large enough. The compressor $f$ will assign to $x\in \pi_{\ell N}(\mS)$ the image under $\phi$ of $x$ restricted to the set $C(x)$ (which contains the support of $x$), together with a signature $s(C(x))$. Knowing $s(C(x))$ (and hence $C(x)$ itself) will allow us to decode $x$ uniquely by applying $\psi$ and setting coordinates from the complement of $C(x)$ to zero. The fact that $\psi$ is $\alpha$-H\"{o}lder and $s$ separates sets from $\mA$ will allow us to conclude that the decompressor is H\"{o}lder as well. Specifically let us define $f:[0,1]^{\ell N} \to [0,1]^{d+1}$,
\[ f(x) = \begin{cases}
0, & x \notin \pi_{\ell N}(\mS)\\
(\phi(x|_{C(x)}), s(C(x)), & x \in \pi_{\ell N}(\mS)
\end{cases}. \]
Clearly $f$ is Borel. Define $g : f([0,1]^{\ell N}) \to [0,1]^{\ell N}$ as
\[ g(x_1, \ldots., x_d, x_{d+1}) = \begin{cases}
0, & x_{d+1} = 0\\
\psi(x_1, \ldots, x_d) \upharpoonleft s^{-1}(x_{d+1}), & x_{d+1} \neq 0
\end{cases}, \]
where $\psi(x_1, \ldots, x_d) \upharpoonleft s^{-1}(x_{d+1})$ denotes the vector in $[0,1]^{\ell N}$ obtained by putting the consecutive elements of the vector $\psi(x_1, \ldots, x_d)$ (which has length $\ell K$) in the consecutive coordinates from the set $s^{-1}(x_{d+1}) \in \mA$ (of cardinality $\ell K$ as well) and setting other coordinates to zero. Note that $g$ is well defined on $f([0,1]^{\ell N})$ and $g \circ f = \id$ on $\pi_{\ell N}(\mS)$. Let us show now that $g$ is $\alpha$-H\"{o}lder on $f([0,1]^{\ell N})$. As $\psi \in \mH_{\alpha}$, there exists $L>0$ such that $\psi$ is $(L,\alpha)$-H\"{o}lder in the norm $\| \cdot \|_p$. Note that $f([0,1]^{\ell N}) \subset [0,1]^d \times \{0, \frac{1}{2}, \frac{1}{2} + \frac{1}{2(\#\mA - 1)}, \ldots, 1 - \frac{1}{2(\#\mA - 1)}, 1 \}$. Fix $x = (x_1, \ldots, x_d, x_{d+1}),\ y = (y_1, \ldots, y_d, y_{d+1}) \in f([0,1]^{\ell N})$. Assume first that $x_{d+1} \neq y_{d+1}$. Then $|x_{d+1} - y_{d+1}| \geq \frac{1}{2(\#\mA - 1)}$, hence $\|x - y\|_p \geq M$, where $M = \frac{1}{(d+1)^\frac{1}{p}2(\#\mA - 1)}$ for $p \in (1,\infty)$ and $M :=  \frac{1}{2(\#\mA - 1)}$ for $p=\infty$. Therefore

\begin{equation}\label{eq:holder_d+1_neq}\|g(x) - g(y)\|_p \leq 1 \leq \frac{1}{M^{\alpha}}\|x - y\|_p^{\alpha} \text{ if } x_{d+1} \neq y_{d+1}
\end{equation}
Assume now that $x_{d+1} = y_{d+1}$. By $(L,\alpha)$-H\"{o}lder continuity of $\psi$ on $[0,1]^d$, we have

\begin{equation}\label{eq:holder_d+1_eq}
\begin{aligned}
\|g(x)-g(y)\|_p &\leq \| \psi(x_1, \ldots, x_d) - \psi(x_1, \ldots, x_d) \|_p \\ &\leq L \| (x_1,\ldots, x_d) - (y_1, \ldots, y_d) \|_p^{\alpha} \\
& \leq \tilde{L} \|x -y \|_p^{\alpha} \text{ if } x_{d+1} = y_{d+1},
\end{aligned}
\end{equation}
where $\tilde{L} := \frac{L(d+1)^{\frac{\alpha}{p}}}{d^{\frac{\alpha}{p}}}$ if $p \in (1,\infty)$ and $\tilde{L} := L$ if $p=\infty$. Setting $L' = \max \{ \frac{1}{M^\alpha},\tilde{L} \}$, we can conclude from (\ref{eq:holder_d+1_neq}) and (\ref{eq:holder_d+1_eq}) that $g$ is $(L', \alpha)$-H\"{o}lder on $f([0,1]^{\ell N})$. It can be extended to $(L'', \alpha)$-H\"{o}lder function $g:[0,1]^{d+1} \to [0,1]^{\ell N}$, where $L'' = (\ell N)^\frac{1}{p}L'$ if $p\in[1,\infty)$ and $L'' = L'$ if $p=\infty$ (see Remark \ref{rem:extension_theorems}). As noted above, $g \circ f = \id$ on $\pi_{\ell N}(\mS)$, hence $\mu(\{ x \in \mS : g \circ f (x|_0^{\ell N-1}) \neq x|_0^{\ell N-1} \}) = 0$ for every $\mu \in \mP_{\sigma}(\mS)$. We therefore obtain bound
\[ \r_{\mB - \mH^p_{L'', \alpha}}(\mS, 0, \ell N) \leq \frac{d+1}{\ell N} = \frac{\lceil \alpha \ell K \rceil +2}{\ell N} \leq \frac{\alpha K}{N} + \frac{3}{\ell N} \leq \frac{\alpha K}{N} + \eps.  \]
By Lemma \ref{l:R0_subadd} applied with $\mC = \mB$, we arrive at
\[ \inf_{L>0} \r_{\mB - \mH^p_{L, \alpha}}(\mS, 0) \leq  \r_{\mB - \mH^p_{L'', \alpha}}(\mS, 0, \ell N) \leq \frac{\alpha K}{N} + \eps. \]
As $\eps>0$ was arbitrary, (\ref{eq:sparse_unif_borel_rate}) is proved and consequently (\ref{eq:sparse_borel_rate2}) holds.

\subsection{Example \ref{ex:sparse_LIN-H}}\label{sub:sparse_LIN-H}

Let $\mS \subset[0,1]^\Z$ be the subshift from Example \ref{ex:sparse_subshift}, i.e.
\[ \mS = \{ x \in [0,1]^\Z : \underset{j \in \Z}{\forall}\ \|x|_{j}^{j+N-1} \|_0 \leq K\} \]
for $K,N \in \N,\ K \leq N$. We will show that for any $\alpha \in (0,1]$
\begin{equation}\label{eq:LIN-H_sparse_lower2} \sup \limits_{\mu \in \mP_{\sigma}(\mS)} \r_{\LIN - \mH_{\alpha}}(\mu, 0) \geq \min \Big\{\frac{2K}{N}, 1 \Big\} = \min \{ 2\mmdim(\mS), 1 \}.
\end{equation}
Actually, we will construct a single measure $\mu$ which realizes the above bound. Let $\nu \in \Prob([0,1]^N)$ be any Borel probability measure with $\supp(\nu) = [0,1]^K \times \{0 \}^{N-K}$ (e.g. product of the Lebesgue measure on $[0,1]^K$ with the Dirac's delta at zero in $[0,1]^{N-K}$). Define measure
\[\mu = \frac{1}{N} \sum \limits_{j=0}^{N-1} \sigma^j_*(\bigotimes \limits_{\Z} \nu). \]
Note that $\mu \in \mP_{\sigma}(\mS)$. Fix $n,k \in \N$ and consider any pair of functions $f : [0,1]^n \to [0,1]^k,\ g : [0,1]^k \to [0,1]^n$ such that $f \in \LIN,\ g \in \mH_{\alpha}$ and 
\begin{equation}\label{eq:LIN-H_sparse_full_meas} \mu(\{ x \in \mS : g \circ f (x|_0^{n-1}) \neq x|_0^{n-1} \}) = 0.
\end{equation} We will prove now a lower bound on $k$ and use it to conclude (\ref{eq:LIN-H_sparse_lower2}). We can decompose $n$ uniquely as $n = \ell N + m$ for some $\ell \in \N_0$ and $m \in \{ 0, 1, \ldots, N-1\}$. Define set $A \subset [0,1]^n$ as
\[ A =  \Big(\big([0,1]^K \times \{0 \}^{N-K}\big)^{\ell} \times \{0\}^m \Big) \cup \Big( \big(\{0 \}^{N-K} \times [0,1]^K\big)^{\ell} \times \{0\}^m \Big).\]
Note that $A \subset \supp((\pi_n)_*\mu)$. As both $f$ and $g$ are continuous, the set $\{ x \in [0,1]^n : g \circ f (x) = x \}$ is closed and, by (\ref{eq:LIN-H_sparse_full_meas}), of full measure $(\pi_n)_*\mu$. Therefore $\supp((\pi_n)_*\mu) \subset \{ x \in [0,1]^n : g \circ f (x) = x \}$ and consequently
\[ A \subset \{ x \in [0,1]^n : g \circ f (x) = x \}.\]
Therefore $f$ is injective on $A$. Let $F: \R^n \to \R^k$ be the linear extension of $f$ (it exists as $f \in \LIN$). We claim that $F$ is injective on a larger set
\[ E =  \Big(\big(\R^K \times \{0 \}^{N-K}\big)^{\ell} \times \{0\}^m \Big) \cup \Big( \big(\{0 \}^{N-K} \times \R^K\big)^{\ell} \times \{0\}^m \Big).\]
Indeed, if $x,y \in E$ are such that $F(x)=F(y)$, then there exists $v \in \R^n$ and $t>0$ such that $t(x + v) \in A,\ t(y + v)\in A$ and $f(t(x + v)) = f(t(y + v))$. Consequently $t(x + v) = t(y + v)$, hence $x=y$. As $F$ is linear, its injectivity on $E$ implies that 
\begin{equation}\label{eq:LIN-H_sparse_ker}
\Ker(F) \cap (E - E) = \{ 0 \}
\end{equation}
(here $E - E$ denotes the algebraic difference, i.e. $E - E = \{ x - y : x,y\in E\}$). Note that $E-E$ contains a linear subspace of dimension $\min\{2\ell K, \ell N\}$. More precisely:
\[ \big(\R^K \times \{0 \}^{N-2K} \times \R^K\big)^{\ell} \times \{0\}^m \subset E - E\ \text{  if  } 2K \leq N\]
and
\[ \big(\R^N\big)^{\ell} \times \{0\}^m \subset E - E\ \text{  if  }  2K > N. \]
Therefore (\ref{eq:LIN-H_sparse_ker}) implies $\dim(\Ker(F)) \leq n - \min\{2\ell K, \ell N\}$ and consequently $k \geq \dim(\mathrm{Im}(F)) \geq \min\{2\ell K, \ell N \}$. This gives bound
\[ \r_{\LIN - \mH^p_{L,\alpha}} (\mu, 0, n) \geq \frac{\min\{2\ell K, \ell N \}}{n} = \frac{\ell\min\{2K, N\}}{\ell N + m} \geq \frac{\ell\min\{2K, N\}}{(\ell+1)N} = \frac{\ell}{\ell+1} \min \Big\{\frac{2K}{N}, 1 \Big\}. \]
Taking $\limsup \limits_{n \to \infty}$ and recalling that $l \to \infty$ as $n \to \infty$, we obtain finally
\[ \r_{\LIN - \mH^p_{L,\alpha}} (\mu, 0) \geq \min \Big\{\frac{2K}{N}, 1 \Big\}. \]
Equality $\min \Big\{\frac{2K}{N}, 1 \Big\} = \min \{ 2\mmdim(\mS), 1 \}$ follows from the calculation in Example \ref{ex:sparse_subshift}. This concludes the proof of (\ref{eq:LIN-H_sparse_lower2}). Note that the upper bound $\frac{2}{1-\alpha}\mmdim(\mS)$ of Theorem \ref{thm:main_holder_up} approaches the above lower bound $2\mmdim(\mS)$ as $\alpha \to 0$, hence the constant $\frac{2}{1-\alpha}$ in Theorem \ref{thm:main_holder_up} cannot be in general replaced by any smaller one.

\section{Lower bound for $\r_{\mathcal{B}-\mathcal{H}_{\alpha}}(\mu,\eps)$}\label{app:wv_lower_bound}
Following closely the proof of \cite[Lemma 13]{WV10} (see also \cite[Equation (75)]{WV10})
we have the following proposition:
\begin{prop}\label{prop:rbh_below}
	Let $\mS$ be a closed and shift-invariant subspace of \textup{$[0,1]^{\mathbb{Z}}$}
	and $\mu\in  \mP_{\sigma}(\mS)$. Then for $0<\eps<1$ and $\alpha \in (0,1]$ the following holds:
	
	\[
	\alpha R_{B}(\mu,\eps)\leq \r_{\mathcal{B}-\mathcal{H_{\alpha}}}(\mu,\eps).
	\]
\end{prop}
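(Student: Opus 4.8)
The plan is to adapt the argument of \cite[Lemma 13]{WV10} to the present two-sided setting. The key point is that the image of a H\"older decompressor is a \emph{compact} subset of $[0,1]^n$ whose upper box dimension is controlled by the H\"older exponent $\alpha$, and hence it can play the role of the set $A$ in Definition \ref{d:measurable_mean_minkowski}. Throughout, write $R_B(\mu,\eps,n) := \inf\{ \udim(A)/n : A \subset [0,1]^n,\ A\ \text{compact},\ \mu(\pi_n^{-1}(A)) \geq 1-\eps\}$, so that $R_B(\mu,\eps) = \limsup_{n\to\infty} R_B(\mu,\eps,n)$.

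First I would fix $n \in \N$ and a valid compressor--decompressor pair for blocklength $n$: maps $f : [0,1]^n \to [0,1]^k$ in $\mathcal{B}$ and $g : [0,1]^k \to [0,1]^n$ in $\mathcal{H}_{\alpha}$ with $\mu(\{x \in \mS :\ g\circ f(x|_0^{n-1}) \neq x|_0^{n-1}\}) \leq \eps$ (such a pair always exists, e.g.\ $f=g=\id$, since $\diam([0,1]^n,\|\cdot\|)=1$ forces $\id \in \mathcal{H}_{\alpha}$, so the infimum defining $\r_{\mathcal{B}-\mathcal{H}_{\alpha}}(\mu,\eps,n)$ is over a nonempty set). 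Set $A := g([0,1]^k) \subset [0,1]^n$. As $g$ is continuous and $[0,1]^k$ is compact, $A$ is compact. Moreover the set $B := \{y \in [0,1]^n :\ g(f(y))=y\}$ satisfies $B \subset A$ (if $g(f(y))=y$ then $y=g(z)$ with $z=f(y)$), and the hypothesis on $f,g$ says precisely $(\pi_n)_*\mu(B)\geq 1-\eps$; hence $\mu(\pi_n^{-1}(A)) \geq (\pi_n)_*\mu(B) \geq 1-\eps$, so $A$ is admissible in the infimum defining $R_B(\mu,\eps,n)$.

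It then remains to estimate $\udim(A)$. Here I would invoke the standard fact that an $\alpha$-H\"older map raises the upper box dimension by at most the factor $1/\alpha$: if $\|g(x)-g(x')\| \leq L\|x-x'\|^{\alpha}$, then a $\delta$-cover of $[0,1]^k$ maps under $g$ to an $(L\delta^{\alpha})$-cover of $A$, so $\#(A,\|\cdot\|,L\delta^{\alpha}) \leq \#([0,1]^k,\|\cdot\|,\delta)$, and since $\log\!\big(1/(L\delta^{\alpha})\big) \sim \tfrac{1}{\alpha}\log(1/\delta)$ as $\delta\to 0$ the constant $L$ disappears in the limit, giving $\udim(A) \leq \tfrac{1}{\alpha}\udim([0,1]^k) = \tfrac{k}{\alpha}$ by Proposition \ref{prop:dim_properties}. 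Consequently $R_B(\mu,\eps,n) \leq \udim(A)/n \leq k/(\alpha n)$ for every admissible $k$; taking the infimum over admissible $k$ gives $R_B(\mu,\eps,n) \leq \tfrac{1}{\alpha}\r_{\mathcal{B}-\mathcal{H}_{\alpha}}(\mu,\eps,n)$, and passing to $\limsup_{n\to\infty}$ yields $\alpha R_B(\mu,\eps) \leq \r_{\mathcal{B}-\mathcal{H}_{\alpha}}(\mu,\eps)$.

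I do not expect a serious obstacle; the argument is essentially a repackaging of the corresponding step in \cite{WV10}. The only two points requiring care are: (i) that the relevant set is genuinely compact --- which is why one takes the \emph{image} $g([0,1]^k)$ rather than the merely Borel coincidence set $B$, exploiting continuity of $g$; and (ii) that the box-dimension estimate for H\"older images is phrased so as to be insensitive to the value of the H\"older constant $L$, which matters because $\mathcal{H}_{\alpha}$, unlike $\mathcal{H}^p_{L,\alpha}$, does not fix $L$ uniformly over blocklengths.
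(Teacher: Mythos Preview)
Your proof is correct and follows essentially the same approach as the paper's: exhibit for each $n$ a set $A\subset[0,1]^n$ with $\mu(\pi_n^{-1}(A))\geq 1-\eps$ and $\udim(A)\leq k/\alpha$, then pass to the $\limsup$. The only difference is the choice of $A$: the paper takes $A=\pi_n(\mS\setminus B)$ (the set of correctly decoded words), notes $A=g(f(A))$, and bounds $\udim(A)\leq\tfrac{1}{\alpha}\udim(f(A))\leq k/\alpha$; you instead take the full image $A=g([0,1]^k)$, which is automatically compact and contains the coincidence set, and bound $\udim(A)\leq\tfrac{1}{\alpha}\udim([0,1]^k)=k/\alpha$ directly. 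Your choice is arguably cleaner, since compactness of $A$ is immediate (the paper's $A$ is a priori only Borel and one should strictly speaking pass to its closure, which has the same upper box dimension), but the argument is otherwise the same.
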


\begin{proof}
	Fix $n\in\N$ and let $\r_{\mathcal{B}-\mathcal{H}_{\alpha}}(\mu,\eps,n)=\frac{k}{n}$. There exist
	$f:[0,1]^n\to[0,1]^{k},\ g:[0,1]^{k}\to[0,1]^{n}$ such
	that $f$ is Borel, $g$ is $(L,\alpha)$-H\"older and $B=\{x\in \mS:g\circ f(x|_{0}^{n-1})\neq x|_{0}^{n-1}\}$
	satisfies $\mu(B)\leq\eps$. Take $A=\pi_{n}(\mS\setminus B)\subset[0,1]^{n}$. Then $A=g(f(A)),\ f(A) \subset [0,1]^{k}$ and $\udim(f(A)) \leq k$. Since $g$ is $\alpha$-H\"{o}lder, we have by \cite[Lemma 3.3.(iv)]{Rob11} that $\udim(A) = \udim(g(f(A))) \leq \frac{1}{\alpha} \udim(f(A)) \leq \frac{k}{\alpha}$. We also have $\mu(\pi_{n}^{-1}(A))\geq\mu(\mS\setminus B)\geq1-\eps$,
	hence $R_{B}(\mu,\eps)\leq\frac{1}{\alpha}\frac{k}{n}=\frac{1}{\alpha}\r_{\mathcal{B}-\mathcal{{H}}_{\alpha}}(\mu,\eps,n)$.
	Taking $\limsup\limits_{n\to\infty}$ on the right side, we get the
	desired result.
\end{proof}

\section*{Acknowledgments}
We are grateful to Amos Lapidoth, Neri Merhav and Erwin Riegler for helpful discussions. We are grateful to the editor Ioannis Kontoyiannis and the anonymous reviewers for very helpful comments.

\def\cprime{$'$} \def\cprime{$'$}

\end{document}